\newtheorem {theorem} {Theorem}%[section]
\newtheorem {proposition} [theorem]{Proposition}
\newtheorem {lemma}  [theorem]{Lemma}
\newtheorem {remark} [theorem]{Remark}
\newtheorem {definition} [theorem]{Definition}
\newtheorem {proposition and conjecture} [theorem]{Proposition and Conjecture}
\begin{document}

\title[A new Chebyshev criterion and its application]
{A new Chebyshev criterion and its application to planar differential systems}

\author[J. Huang, H. Liang and X. Zhang]
{Jianfeng Huang$^1$, Haihua Liang$^2$, Xiang Zhang$^3$}

\address{$^1$  Department of Mathematics,\ Jinan University,\
Guangzhou\ 510632,\ P.R.\ China} \email{thuangjf@jnu.edu.cn}

\address{$^2$
School of Mathematics and Systems Science,\ Guangdong Polytechnic
Normal University,\ Guangzhou\ 510665,\ P.R.\ China}
\email{1215187342@qq.com}

\address{$^3$
School of Mathematical Sciences and MOE-LSC,\ Shanghai Jiao Tong University,\ Shanghai\ 200240,\ P.R.\ China}
\email{xzhang@sjtu.edu.cn}

\subjclass[2010]{Primary 34C07. Secondary 37C05. Tertiary 35F60}
\keywords{Chebyshev systems; Smooth and piecewise smooth planar differential systems; Separation radials; Melnikov functions; Limit cycles. }

\maketitle

\begin{abstract}
This paper establishes a new Chebyshev criterion for some family of integrals.
By virtue of this criterion we obtain several new Chebyshev families.
With the help of these new families we can answer the conjecture posed by Gasull et al in 2015.
%[J. Differential Equations, 258 (2015), 3286--3303].
Their applications to other two planar differential systems also show that our approach is simpler and in a unified way to handle many kinds of planar differential systems for estimating the number of limit cycles bifurcating from period annulus.
\end{abstract}

%\tableofcontents \setcounter{tocdepth}{1}

\section{Introduction and statements of main results}
Let $ f_0,f_1,\cdots,f_m $ be smooth functions defined on an interval $E$. We say that the order set $ \{f_0,f_1,\cdots,f_m\} $ is a {\em Chebyshev system} (T-system) on $E$, if any non-trivial linear combination $\lambda_0 f_0+\lambda_1 f_1+\cdots+\lambda_{m}f_{m}$ has at most $ m $ isolated zeros on the interval. Moreover, if this upper bound is sharp taking into account multiplicities, the order set is called an {\em extended Chebyshev system} (ET-system).

If $ \{f_0,f_1,\cdots,f_k\} $ is a T-system (resp. ET-system) for each $ k=0,1,\cdots,m $,  the order set  $ \{f_0,f_1,\cdots,f_m\} $ is called  a {\em complete Chebyshev system} (CT-system) (resp. {\em extended complete Chebyshev system} (ECT-system)) on $E$.

The Chebyshev systems have been applied to a number of mathematical problems, such as the weaken Hilbert 16th problem, critical period problem and so on in the past decades. They were initially used in approximation theory, in the studies of spline functions and boundary value problems, and in the theory of fine moment, see e.g. \cite{taubes21.7,taubes21.1} for more results on this field. They were also utilized in the theory of differential systems for studying versal unfolding of the singularities, see e.g. \cite{taubes21.13,taubes21.18} and their cited references, and for estimating the number of isolated periodic orbits, i.e. limit cycles, bifurcating from period annuli. More concretely, in the latter this theory has been employing to obtain the upper bound for the number of zeros of the Melnikov functions, see e.g. \cite{taubes21.18,taubes17,taubes18,taubes28,taubes2,taubes12,taubes11,taubes15,taubes16,taubes8,taubes19,taubes23,taubes Liu1,taubes Liu2,taubes Wang-Xiao-Han,taubes Huang-Liu-Wang} and the references therein. Actually these studies provide lower bounds for the so called weaken Hilbert 16th problem, see e.g.  \cite{taubes21.2,taubes21.20,taubes21.18} for detail description. The theory of ECT-systems has also been applied to the investigation of the critical periods of the period function defined on period annuli of potential systems, see e.g.  \cite{taubes21.12,taubes Vill-Zhang,taubes Grau-Vill}.

There are some well-known classical ECT-systems. For instance,
\[\begin{array}{cl}
%&\{1,x,x^2,\cdots,x^m\} \text{ on } \mathbb R,\\
 \{1,\log x,x,x\log x,x^2,x^2\log x,\cdots,x^m,x^m\log x\} & \text{ on } \mathbb R^+:=\{x\in\mathbb R:\ x>0\},\\
 \{1,\cos\theta,\cos2\theta,\cdots,\cos m\theta\} &\text{ on } (0,\pi),\\
 \{(x+a_0)^{-1},(x+a_1)^{-1},\cdots,(x+a_m)^{-1}\} &\text{ on } (-\min\{a_i\},+\infty),
\end{array}
\]
etc. However, there are also many non-trivial families that appear in the works mentioned above. Some of them are defined by a series of integrals, such as the Abelian integrals. Usually, it is not easy to verify the Chebyshev property for such families of functions because of their complicated or unknown explicit expressions of their elements and the cumbersome calculations. For this reason, the problem of efficiently determining the Chebyshev property for a given family of functions, is of wide interest. Here we summarize some related main results on determination of Chebyshev property for several families of functions.

 Gasull et al \cite{taubes12}  in 2002 studied the Chebyshev property for the elliptic
integrals by using the argument principle. Gasull et al \cite{taubes2} in 2012 considered a family of integrals of the form
\begin{align}\label{eq18}
\begin{split}
&\left\{\int_{E}\frac{1}{(1-yg(t))^{\alpha}}dt,\int_{E}\frac{g(t)}{(1-yg(t))^{\alpha}}dt,\cdots,\int_{E}\frac{g^m(t)}{(1-yg(t))^{\alpha}}dt\right\},
\ \ g(t)\not\equiv0,
\end{split}
\end{align}
and characterized their Chebyshev property based on the Gram determinants for a suitable set of functions. Gasull et al \cite{taubes18} in 2012 worked up with another family
\begin{align}\label{eq43}
  \left\{1,y,\cdots,y^{m_0}\right\}\bigcup\left(\bigcup^{n}_{i=1}\left\{(y+a_i)^{\beta},y(y+a_i)^{\beta},\cdots,y^{m}(y+a_i)^{\beta}\right\}\right),
\end{align}
which originates from some kinds of Abelian integrals.
By virtue of the Derivation-Division algorithm they proved that \eqref{eq43} is an ET-system when $\beta\not\in\mathbb Z$ and $a_1,\cdots,a_n$ are different real numbers.
  Grau et al \cite{taubes11} in 2011 provided a Chebyshev criterion for a family of Abelian integrals, via Chebyshev properties of the functions in the integrands.
Cen et al \cite{taubes28} in 2020 obtained  another Chebyshev criterion for a similar family according to asymptotic expansions of the  Wronskians of the family of functions. For more relevant works see e.g. Gavrilov and Iliev \cite{taubes23} in 2003 and Gasull et al \cite{GGM20} in 2020, and their references therein.
\vskip 0.3cm

In this work we are concerned with the Chebyshev property for a family of integrals over some different sets. Let $U$ and $E_0,\cdots,E_n$ be $n+2$ open intervals in $\mathbb R$. Assume that $f_{i,j}=f_{i,j}(t)$ and $G_{i}=G_{i}(t,y)$ are $C^{\omega}(E_i)$ and $C^{\omega}(E_i\times U)$ functions, respectively, where $i=0,\cdots,n$ and $j=0,\cdots,m_i$.
Assume additionally that for each $l=0,\cdots,n+\sum_{i=0}^{n}m_i$, the function $f_{ij}\partial^{l}_{y}G_{i}$ is absolutely integrable in the variable $t$ on $E_i$, and the integral $\int_{E_i}f_{ij}\partial^{l}_{y}G_{i}dt$ is uniformly convergent on any fixed closed subinterval of $U$ if it is an improper integral.
We study the following family of functions
\begin{align}\label{eq1}
\begin{split}
\mathcal{F}&=\bigcup^{n}_{i=0}\{I_{i,0},I_{i,1},\cdots,I_{i,m_i}\}
%\\ &=\{I_{0,0},\cdots,I_{0,m_0},I_{1,0},\cdots,I_{1,m_1},\cdots,I_{n,0},\cdots,I_{n,m_n}\},
\end{split}
\end{align}
where
\begin{align}\label{eq2}
I_{i,j}(y):=\int_{E_i}f_{i,j}(t)G_{i}(t,y)dt.
\end{align}
We remark that by assumption,
\begin{align*}
  I^{(l)}_{i,j}(y)=\int_{E_i}f_{i,j}(t)\partial^{l}_{y}G_{i}(t,y)dt \quad \text{ for } \ \ l=0,\cdots,n+\sum_{i=0}^{n}m_i.
\end{align*}

In order to state our results, we introduce other two notions that are closely related to the theory of Chebyshev systems (see Section 2 for more details).
\begin{definition}
  Let $f_0,\cdots,f_{k}$ be analytic functions on an open set $E\subset\mathbb R$.
  The continuous Wronskian of $\{f_0,\cdots,f_{k}\}$ at $t\in E$ is
 \begin{align*}
   W[f_0,\cdots,f_k](t)=\det(f^{(i)}_j(t);\ 0\leq i,j\leq k)
   =\left|
  \begin{array}{ccccccc}
  f_0(t)             &\cdots   &f_{k}(t)\\
  f'_0(t)            &\cdots   &f'_{k}(t)   \\
  \vdots            &\ddots   &\vdots                 \\
 f^{(k)}_0(t)       &\cdots   & f^{(k)}_{k}(t)    \\
  \end{array}
  \right|.
 \end{align*}
  The discrete Wronskian of $\{f_0,\cdots,f_{k}\}$ at $(t_0,\cdots,t_{k})\in E^{k+1}$ is
 \begin{align*}
  D[f_0,\cdots,f_k;t_0,\cdots,t_k]=\det(f_j(t_i);\ 0\leq i,j\leq k)
  =\left|
  \begin{array}{ccccccc}
  f_0(t_0)             &\cdots   &f_{k}(t_0)\\
  f_0(t_1)            &\cdots   &f_{k}(t_1)   \\
  \vdots            &\ddots   &\vdots                 \\
 f_0(t_{k})       &\cdots   & f_{k}(t_{k})    \\
  \end{array}
  \right|.
\end{align*}
% And the mixed Wronskian of $(f_0,\cdots,f_k)$ at $(\mathbf{t}_0,\cdots,\mathbf{t}_k)=((t_{0,0},\cdots,t_{0,l_0}),\cdots,(t_{k,0},\cdots,t_{k,l_k}))\in E^{l_0+1}\times\cdots\times E^{l_k+1}$ is
%\begin{align*}
%  \omega&[f_0,\cdots,f_k;\mathbf t_{0},\cdots,\mathbf t_{k}]\\
%  &=
%  \left|
%  \begin{array}{ccccccc}
%  f_0(t_{0,0})             &\cdots   &f_0(t_{0,l_0})                      &\cdots                   &f_k(t_{k,0})                   &\cdots                                        %&f_k(t_{k,l_k})  \\
%  f'_0(t_{0,0})            &\cdots   &f'_0(t_{0,l_0})                     &\cdots                  &f'_k(t_{k,0})                  &\cdots                                     %&f'_k(t_{k,l_k})  \\
%  \vdots            &\ddots   &\vdots                         &\cdots                  &\vdots                      &\ddots                                                   %&\vdots  \\
% f^{(N)}_0(t_{0,0}) &\cdots   & f^{(N)}_0(t_{0,l_0})                      &\cdots
%  &f^{(N)}_k(t_{k,0}) &\cdots &f^{(N)}_k(t_{k,l_k}) \\
%  \end{array}
%  \right|.
%\end{align*}
\end{definition}

The next is our first main result, which provides a new criterion for determining the Chebyshev property of the family $\mathcal F$.

\begin{theorem}\label{thm1}
The order set of functions, $\mathcal{F}$, is an ECT-system on $U$ if
\begin{itemize}
\item[{\rm(H)}] for each $s\in\{0,\cdots,n\}$, $(k_0,\cdots,k_s)\in \{0,\cdots,m_0\}\times\cdots\times\{0,\cdots,m_s\}$ and $y\in U$,
%\begin{align*}
%  \prod^{s}_{i=0}&D[\mathbf f_{i,k_i+1};\mathbf t_{i,k_i+1}]\\
%&\cdot W[G_0(\cdot,t_{0,0}),\cdots,G_0(\cdot,t_{0,k_0}),\cdots,G_s(\cdot,t_{s,0}),\cdots,G_s(\cdot,t_{s,k_s})](y)
%\end{align*}
\begin{align}\label{eq19}
\begin{split}
%\omega&(t_{0,0},\cdots,t_{0,k_0},\cdots,t_{s,0},\cdots,t_{s,k_s},y)\\
%&:=
\prod^{s}_{i=0}&D[f_{i,0},\cdots,f_{i,k_i};t_{i,0},\cdots,t_{i,k_i}]\\
&\cdot W[G_0(t_{0,0},\cdot),\cdots,G_0(t_{0,k_0},\cdot),\cdots,G_s(t_{s,0},\cdot),\cdots,G_s(t_{s,k_s},\cdot)](y)
\end{split}
\end{align}
does not vanish identically and does not change sign for $(t_{0,0},\cdots,t_{0,k_0},\cdots,t_{s,0},$ $\cdots,t_{s,k_s})\in E_0^{k_0+1}\times\cdots\times E_s^{k_s+1}$.
\end{itemize}
\end{theorem}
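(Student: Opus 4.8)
The plan is to reduce the theorem to the standard characterization of ECT-systems recalled in Section~2: an ordered set $\{g_0,\ldots,g_N\}$ of analytic functions is an ECT-system on $U$ precisely when each continuous Wronskian $W[g_0,\ldots,g_p]$, $p=0,\ldots,N$, is nowhere zero on $U$. It therefore suffices to show that the Wronskian of every initial segment of $\mathcal{F}$ is non-vanishing on $U$. An initial segment ends inside some block $s$ at position $k_s$, hence contains all of the blocks $0,\ldots,s-1$ (that is, $k_i=m_i$ for $i<s$) together with $I_{s,0},\ldots,I_{s,k_s}$. So I would aim to prove that $W[\{I_{i,j}\}_{0\le i\le s,\,0\le j\le k_i}](y)\neq 0$ for all $s$, all admissible $k_s$, and all $y\in U$; since Wronskian non-vanishing is insensitive to the ordering of the functions, I will in fact establish this for every ``lower-triangular'' truncation $(k_0,\ldots,k_s)$, which is exactly the generality of hypothesis~(H).

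The core of the argument is an integral representation of this Wronskian. First I would differentiate under the integral sign, legitimate by the uniform-convergence hypothesis, so that $I^{(l)}_{i,j}(y)=\int_{E_i}f_{i,j}(t)\,\partial^l_y G_i(t,y)\,dt$ and the Wronskian becomes a determinant whose row index is the derivative order $l$ and whose column index is the pair $(i,j)$. Because every entry of a fixed column is an integral over the same $E_i$ against the same kernel, I would expand the determinant by multilinearity in the columns, introducing one dummy variable $t_{i,j}\in E_i$ per column, and then pull out the column factor $f_{i,j}(t_{i,j})$, which is independent of the row $l$. What remains is $\det\big(\partial^l_y G_i(t_{i,j},y)\big)$ over all rows $l$ and columns $(i,j)$, i.e. the continuous Wronskian of the $y$-slices, giving
\[
W[\{I_{i,j}\}](y)=\int_{\prod_i E_i^{k_i+1}}\Big(\prod_{i,j}f_{i,j}(t_{i,j})\Big)\,W[G_0(t_{0,0},\cdot),\ldots,G_s(t_{s,k_s},\cdot)](y)\,\prod_{i,j}dt_{i,j}.
\]

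The decisive step is to convert the products $\prod_j f_{i,j}(t_{i,j})$ into the discrete Wronskians of \eqref{eq19}. The key observation is that within a single block $i$ the columns carry the common kernel $G_i$, so permuting the variables $t_{i,0},\ldots,t_{i,k_i}$ by a permutation $\sigma$ permutes the corresponding columns of the joint Wronskian and multiplies it by $\mathrm{sgn}(\sigma)$, while leaving the symmetric domain $E_i^{k_i+1}$ and the measure invariant. Averaging over $\sigma$ block by block replaces $\prod_j f_{i,j}(t_{i,j})$ by $\frac{1}{(k_i+1)!}D[f_{i,0},\ldots,f_{i,k_i};t_{i,0},\ldots,t_{i,k_i}]$, and carrying out this antisymmetrization in every block yields
\[
W[\{I_{i,j}\}](y)=\frac{1}{\prod_{i=0}^{s}(k_i+1)!}\int_{\prod_i E_i^{k_i+1}}\Big(\prod_{i=0}^{s}D[f_{i,0},\ldots,f_{i,k_i};t_{i,0},\ldots,t_{i,k_i}]\Big)\,W[\,\cdots\,](y)\,\prod_{i,j}dt_{i,j},
\]
whose integrand is exactly the expression in \eqref{eq19}. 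Under~(H) this integrand, for each fixed $y$, does not change sign and is not identically zero in the $t$-variables, so the integral over the positive-measure domain is strictly of one sign and $W[\{I_{i,j}\}](y)\neq 0$ for every $y\in U$; specializing to initial segments and invoking the Wronskian characterization finishes the proof. I expect the main obstacle to be the rigorous justification of the block-wise antisymmetrization—tracking the signs produced by column permutations of the joint Wronskian and confirming that only within-block permutations preserve the integral structure, since distinct blocks carry distinct kernels $G_i$ and domains $E_i$—together with the careful interchange of the determinant expansion with the (possibly improper) integrals.
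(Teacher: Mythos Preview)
Your proposal is correct and follows essentially the same route as the paper: the paper isolates your integral representation with block-wise antisymmetrization as a separate Proposition~\ref{prop2} (built on the ``commutativity of integration and determinant'' in Lemma~\ref{lem2.6}, which is exactly your multilinearity-in-columns step), and then the proof of Theorem~\ref{thm1} is reduced to the one-line observation that hypothesis~(H) forces the integrand to have fixed sign and hence the Wronskian to be nonzero, followed by Lemma~\ref{lem2.2}. The only cosmetic difference is organizational: you inline the derivation of the formula, whereas the paper packages it first and then invokes it.
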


It will be seen later that, Theorem \ref{thm1} in fact implies that the Chebyshev property of the integrals in \eqref{eq1} is determined by the Chebyshev properties of the functions in the integrands of $I_{i,j}$'s in \eqref{eq2}. The key idea to obtain this result comes from in some sense ``commutativity'' of integration and determinant (see Section 2), which is inspired by the works in \cite{taubes11}.

Denote by $\mathbb Z^{\pm}_0=\mathbb Z^{\pm}\cup\{0\}$. As mentioned above, Gasull et al \cite{taubes2} proved that the family \eqref{eq18} is an ECT-system
on a connected component of the set $\{y\in\mathbb R: (1-yg(t))|_{t\in E}>0\}$ when either
$\alpha\in\mathbb R\backslash(\mathbb Z^-_0)$ or $\alpha\in\mathbb Z^-_0$ with $\alpha\leq -m$.
This family is actually the family $\mathcal F$ with $n=0$, $G_0=(1-yg(t))^{-\alpha}$ and $f_{0,j}=g^{j}$.
Then applying Theorem \ref{thm1} to this special family will yield the same result as that in \cite{taubes2} in a very simple way, see Section 3 for details.

%We stress that family $\mathcal F$ is usually seen in the literature, especially in the studies of bifurcation of the planar differential systems, not only the smooth case but also the piecewise smooth case (we will introduce later).

We stress that in the literature there are many other families that can be written in form of the family $\mathcal F$, especially in the studies of limit cycle bifurcations of planar differential systems, not only in the smooth case but also in the piecewise smooth case, which will be illustrated later on in this paper. There we will also show that in some previous published works the corresponding functions $G_i$ are the restrictions of some given function $G$ on $E_i$. For such kind of families we specialize Theorem \ref{thm1} and get the following result, which provides easily verified conditions on determining their Chebyshev property.
\begin{theorem}
  \label{thm2}
Suppose that $E_0,\cdots, E_n$ are non-intersecting intervals, and $E$ is an open interval that contains $\bigcup^n_{i=0} E_i$ in its interior.
Suppose that there exists a $C^{\omega}(E\times U)$ function $G=G(t,y)$ such that $G_i=G|_{E_i}$ for $i=0,\cdots,n$. Then the order set of functions, $\mathcal{F}$ in \eqref{eq1}, is an ECT-system on $U$ if the following hypotheses are satisfied:
\begin{itemize}
  \item [(H.1)] For each $i\in\{0,\cdots,n\}$, the order set of functions $\{f_{i,0},\cdots,f_{i,m_i}\}$ is a CT-system on $E_i$.
  \item [(H.2)] For each $y\in U$, the order set of functions $\{G,\partial_y G,\cdots,\partial^{M}_y G\}$ is a CT-system in the variable $t$ on $E$, where $M={\rm Card}(\mathcal F)-1$.
\end{itemize}
\end{theorem}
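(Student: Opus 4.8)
The plan is to deduce Theorem~\ref{thm2} from Theorem~\ref{thm1} by checking that hypotheses (H.1) and (H.2), together with the disjointness of the intervals $E_i$, force the single hypothesis (H) of Theorem~\ref{thm1}. So I fix $s\in\{0,\dots,n\}$, a multi-index $(k_0,\dots,k_s)$ and a point $y\in U$, set $N=\sum_{i=0}^{s}(k_i+1)$, relabel the evaluation points as $(\tau_1,\dots,\tau_N)=(t_{0,0},\dots,t_{0,k_0},\dots,t_{s,0},\dots,t_{s,k_s})$, and must show that the product in \eqref{eq19} neither vanishes identically nor changes sign as these points range over $E_0^{k_0+1}\times\cdots\times E_s^{k_s+1}$.

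The central observation, and the step I expect to be the crux, is that the continuous Wronskian factor in \eqref{eq19}, computed in the variable $y$, is actually a \emph{discrete} Wronskian in the variable $t$ of the family appearing in (H.2). Writing $g_a(t):=\partial_y^{a}G(t,y)$ and using $G_i=G|_{E_i}$, the $(a,b)$ entry of the Wronskian matrix of $G_0(t_{0,0},\cdot),\dots,G_s(t_{s,k_s},\cdot)$ is $\partial_y^{a}G(\tau_{b},y)=g_a(\tau_b)$, which is exactly the transpose of the discrete Wronskian matrix of $\{g_0,\dots,g_{N-1}\}$ at $(\tau_1,\dots,\tau_N)$. Hence $W[G_0(t_{0,0},\cdot),\dots,G_s(t_{s,k_s},\cdot)](y)=D[g_0,\dots,g_{N-1};\tau_1,\dots,\tau_N]$. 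Since $N-1\le\mathrm{Card}(\mathcal F)-1=M$, hypothesis (H.2) guarantees that $\{g_0,\dots,g_{N-1}\}$ is a T-system on $E$ for this fixed $y$, so this discrete Wronskian is nonzero at distinct points and has a single strict sign on each chamber where the $\tau_b$ are ordered.

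Next I would handle the sign bookkeeping. By (H.1) and the standard characterization of CT-systems through their discrete Wronskians, each factor $D[f_{i,0},\dots,f_{i,k_i};t_{i,0},\dots,t_{i,k_i}]$ is nonzero and of constant sign once the points of block $i$ are ordered $t_{i,0}<\cdots<t_{i,k_i}$. Moreover, I claim the entire product in \eqref{eq19} is invariant under any permutation of the points within one block: transposing $t_{i,j}\leftrightarrow t_{i,j'}$ swaps two rows of the block-$i$ discrete Wronskian and simultaneously swaps the two columns $G_i(t_{i,j},\cdot),G_i(t_{i,j'},\cdot)$ of the continuous Wronskian, so the two sign changes cancel. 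It therefore suffices to control the product on the ordered subregion where $t_{i,0}<\cdots<t_{i,k_i}$ for every $i$.

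Finally, this is where disjointness is used decisively. Because $E_0,\dots,E_s$ are pairwise non-intersecting intervals, they are totally ordered on the line; hence once the points are ordered within each block, the global order of $(\tau_1,\dots,\tau_N)$ is one fixed permutation throughout the ordered subregion, independent of the precise positions. Consequently $D[g_0,\dots,g_{N-1};\tau_1,\dots,\tau_N]$ stays in a single chamber and keeps constant sign there, while each block factor keeps constant sign by the previous step; their product is thus nonzero and of constant sign on the ordered subregion, and so, by the permutation invariance, of constant sign on all of $E_0^{k_0+1}\times\cdots\times E_s^{k_s+1}$ and not identically zero. This verifies (H), and Theorem~\ref{thm1} then yields that $\mathcal F$ is an ECT-system on $U$. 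The main obstacle is getting the transpose identity and the frozen global ordering exactly right; the remainder is the routine dictionary between T-systems and their discrete Wronskians.
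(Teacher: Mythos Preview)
Your proposal is correct and follows essentially the same route as the paper: reduce to Theorem~\ref{thm1} by verifying (H), identify the continuous Wronskian in \eqref{eq19} with the discrete Wronskian $D[G,\partial_yG,\dots,\partial_y^{N-1}G;\tau_1,\dots,\tau_N]$, use (H.1) and (H.2) via Lemma~\ref{lem2.2} on the ordered chamber, observe that the product is invariant under within-block permutations because the two sign flips cancel, and exploit the disjointness of the $E_i$ to freeze the global order. The only point the paper makes slightly more explicit is the passage from the open ordered chambers to the full product $E_0^{k_0+1}\times\cdots\times E_s^{k_s+1}$: the complement (where some $t_{i,j}=t_{i,j'}$) has Lebesgue measure zero, so continuity gives ``does not change sign'' rather than ``nowhere zero'' on the whole domain---which is exactly what (H) requires.
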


Under Proposition \ref{prop1} we will comment that this theorem and the next Theorem \ref{thm4} and Proposition \ref{prop1} have greatly generalized and improved several previously published nice works.

In what follows we focus on a particular type of family that is considered in Theorem \ref{thm2}. More precisely, assume additionally in the theorem that $G$ and $G_i$ are of the form
\begin{align}\label{eq3}
\begin{split}
   G=\frac{1}{(1-yg(t))^{\alpha}},\ \
  G_i=G|_{E_i\times U},
\end{split}
  \end{align}
where $g\in{\rm C}^{\omega}(E)$, $\alpha\in\mathbb R$, $U\subseteq\{y\in\mathbb R: (1-yg(t))|_{t\in E}>0\}$ and $i=0,\cdots,n$ (clearly the family \eqref{eq18} is of this type).
 Then it is not difficult to verify that the hypothesis (H.2) in Theorem \ref{thm2} holds when $g$ is monotonic and either $\alpha\in\mathbb R\backslash(\mathbb Z^-_0)$ or $\alpha\in\mathbb Z^-_0$ with $\alpha\leq 1-{\rm Card}(\mathcal F)$.
 Thus by Theorem \ref{thm2} we obtain the following result.

\begin{theorem}\label{thm4}
  Suppose that $E_0,\cdots, E_n$ are non-intersecting intervals, and $E$ is an open interval that contains $\bigcup^n_{i=0} E_i$.
Suppose that $G_i$ are of the form \eqref{eq3} with $g$ being monotonic on $E$ and $\alpha\in\big(\mathbb R\backslash\mathbb Z^-_0\big)\bigcup\big(\mathbb Z^-_0\cap(-\infty,1-{\rm Card(\mathcal F)}]\big)$.  Then the order set of functions, $\mathcal{F}$ in \eqref{eq1}, is an ECT-system on $U$, if for each $i\in\{0,\cdots,n\}$ the order set of functions $\{f_{i,0},\cdots,f_{i,m_i}\}$ is a CT-system on $E_i$.
\end{theorem}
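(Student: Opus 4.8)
The plan is to deduce Theorem \ref{thm4} directly from Theorem \ref{thm2}. Hypothesis (H.1) of Theorem \ref{thm2} is exactly the assumption made here (each $\{f_{i,0},\dots,f_{i,m_i}\}$ is a CT-system on $E_i$), and the structural requirements on $E_0,\dots,E_n,E$ and on the existence of a single analytic $G$ with $G_i=G|_{E_i}$ are built into \eqref{eq3}. Hence the whole task reduces to verifying hypothesis (H.2): that, for each fixed $y\in U$, the ordered set $\{G,\partial_y G,\dots,\partial_y^{M}G\}$ is a CT-system in the variable $t$ on $E$, where $M=\mathrm{Card}(\mathcal F)-1$ and $G=(1-yg(t))^{-\alpha}$.

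First I would compute the $y$-derivatives explicitly. Differentiating $G=(1-yg)^{-\alpha}$ repeatedly gives
\begin{align*}
\partial_y^{l}G=c_l\,g^{l}(1-yg)^{-\alpha-l}=c_l\,(1-yg)^{-\alpha}h^{l},\qquad h:=\frac{g}{1-yg},
\end{align*}
where $c_l=\prod_{j=0}^{l-1}(\alpha+j)$ (with $c_0=1$). The next step is to check that the stated range of $\alpha$ is precisely what makes $c_0,\dots,c_M$ all nonzero: $c_l$ vanishes only when $\alpha\in\{0,-1,\dots,-(l-1)\}$, so some $c_l$ with $l\le M$ vanishes iff $\alpha\in\mathbb Z^-_0\cap[1-M,0]$; since $1-\mathrm{Card}(\mathcal F)=-M$, the hypothesis $\alpha\in(\mathbb R\setminus\mathbb Z^-_0)\cup(\mathbb Z^-_0\cap(-\infty,-M])$ excludes exactly this bad set, so no member of the family collapses.

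The heart of the argument is then a reduction to monomials. Since $1-yg>0$ on $E$ (because $U\subseteq\{y:(1-yg)|_{t\in E}>0\}$), the common factor $(1-yg)^{-\alpha}$ is positive, and the $c_l$ are nonzero constants. Multiplying the members of an ordered family by nonzero constants and by a common non-vanishing factor sends nontrivial linear combinations to nontrivial linear combinations with the same zeros, hence preserves the CT-system property of every initial segment. Thus (H.2) holds if and only if $\{1,h,h^2,\dots,h^{M}\}$ is a CT-system in $t$ on $E$. I would finish by observing that $h$ is strictly monotonic: an analytic monotonic $g$ is either constant (excluded, as it makes the family degenerate) or strictly monotonic, and a direct computation gives $h'=g'/(1-yg)^2$, which carries the constant sign of $g'$. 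Being strictly monotonic, $h$ is injective, so for any nontrivial polynomial $Q$ of degree $\le k$ the function $Q(h(t))$ vanishes only where $h(t)$ meets one of the at most $k$ roots of $Q$, i.e. at most $k$ times; hence each $\{1,h,\dots,h^{k}\}$ is a T-system and the whole set is a CT-system. Applying Theorem \ref{thm2} then yields the claim.

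The main obstacle I anticipate is not any single computation but the care needed at two points: matching the admissible range of $\alpha$ exactly to the non-vanishing of the constants $c_l$, and justifying the injectivity of $h$ — here the analyticity of $g$ is essential, since a merely smooth monotonic $g$ could be locally constant and destroy injectivity, whereas an analytic monotonic nonconstant $g$ is automatically strictly monotonic. Should the reduction-to-monomials bookkeeping prove delicate, an alternative route is to compute the continuous Wronskians directly via the composition rule $W[1,h,\dots,h^k](t)=(\prod_{j=1}^{k}j!)\,(h'(t))^{k(k+1)/2}$ together with the common-factor rule, obtaining $W[G,\dots,\partial_y^kG]=(\text{nonzero const})\cdot(1-yg)^{-(k+1)(\alpha+k)}(g')^{k(k+1)/2}$, which is sign-definite and not identically zero on $E$; one then invokes the Wronskian characterization of CT-systems recalled in Section 2.
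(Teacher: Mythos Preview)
Your proposal is correct and follows essentially the same route as the paper: both reduce Theorem~\ref{thm4} to Theorem~\ref{thm2} by verifying (H.2), using the identity $\partial_y^{l}G=c_l(1-yg)^{-\alpha}h^{l}$ with $h=g/(1-yg)$ and then invoking injectivity of $h$ (equivalently, non-vanishing of the Vandermonde $\mathcal V[h(t_0),\dots,h(t_K)]$, which is exactly the discrete Wronskian the paper computes). Your treatment of the constant-$g$ edge case via analyticity is in fact a bit more careful than the paper's; the only caveat is that your suggested fallback via the \emph{continuous} Wronskian would fail at isolated zeros of $g'$, so it establishes CT only through the main argument, not independently.
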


We now show that some known results, which were obtained by different approaches previously, can be given in a unified way on account of Theorem \ref{thm4}. Also the theorem allows us to deal with some other problems, and obtain some new families of Chebyshev systems. To illustrate these, %let us construct several new Chebyshev families by the theorem.
%We will see that, our theorem not only generalizes some result given in [Gasull JDE 2012], but also improve the main result given in [Gasull JMAA 2012].
define
\begin{align}\label{eq4}
\begin{split}
  &\mathcal C^{E}_{k}(y)=\int_{E}\frac{\xi_E(\theta)\cos k\theta}{(1-y\cos\nu\theta)^{\alpha}}d\theta,\ \
  \mathcal S^{E}_{k}(y)=\int_{E}\frac{\xi_E(\theta)\sin k\theta}{(1-y\cos\nu\theta)^{\alpha}}d\theta,
\end{split}
\end{align}
%Related to this we also define
%\begin{align}\label{eq4.1}
% \begin{split}
%  &C^{E}_{k}(y)=\int_{E}\frac{\cos k\theta}{(1-y\cos\nu\theta)^{\alpha}}d\theta,\
%  S^{E}_{k}(y)=\int_{E}\frac{\sin k\theta}{(1-y\cos\nu\theta)^{\alpha}}d\theta,\indent \nu\in\mathbb Z^+,\ \alpha\in\mathbb R.\\
%\end{split}
%\end{align}
where $\nu\in\mathbb Z^+$, $\alpha\in\mathbb R$ and $\xi_E$ is an analytic non-vanishing function on $E$. We have the next proposition.

\begin{proposition}\label{prop1}
  The following statements hold.
\begin{itemize}
    \item[(i)] Let $\mathcal C^{E}_{k}$, $\mathcal S^{E}_{k}$ be defined as in \eqref{eq4} with $\alpha\in\mathbb R\backslash\mathbb Z^-_0$, and let $a_{0},\cdots,a_{n},b_{0},\cdots,b_{n}\in\{0,1\}$ with $\sum_{i=0}^{n}(a_{i}+b_{i})\neq0$
    and $a_i\geq b_i$ for each $i=0,\cdots,n$. The following order sets of functions are ECT-systems on $(-1,1):$
    %\begin{itemize}
    %\item [(i.1)]
    \begin{align}\label{eq5}
    \begin{array}{c}
       %&\bigcup^{n}_{i=0}
       % \left\{\eta_{1,i}\mathcal C^{E_i}_{0},
       %       \eta_{1,i}\mathcal C^{E_i}_{1},
       %       \cdots,
       %       \eta_{1,i}\mathcal C^{E_i}_{m_i}
       %       \right\}, \\
       \bigcup^{n}_{i=0}
        \left\{\mathcal S^{E_i}_{1},
               \mathcal S^{E_i}_{2},
               \cdots,
               \mathcal S^{E_i}_{m_i}
               \right\}, \\
       \bigcup^{n}_{i=0}
        \left\{a_{i}\mathcal C^{E_i}_{0},
              a_{i}\mathcal C^{E_i}_{1}
              \cdots,
              a_{i}\mathcal C^{E_i}_{m_i},
              b_{i}\mathcal S^{E_i}_{m_i},
              b_{i}\mathcal S^{E_i}_{m_i-1},
              \cdots,
              b_{i}\mathcal S^{E_i}_{1}
              \right\}
              \backslash\{0\},
    \end{array}
    \end{align}
    where $E_0,E_1,\cdots,E_n$ are non-intersecting open intervals contained in $(0,\frac{\pi}{\nu})$. And
    %\item [(i.2)] The following families of functions are ECT-systems on $y\in(-1,1):$
    \begin{align}\label{eq6}
    \begin{array}{c}
       %&\bigcup^{n}_{i=0}
       % \left\{\mathcal C^{E_i}_{k_{i}},
       %       \mathcal C^{E_i}_{k_{i}+2},
       %       \cdots,
       %       \mathcal C^{E_i}_{k_{i}+2m_i}
       %       \right\}, \\
       \bigcup^{n}_{i=0}
        \left\{\mathcal S^{E_i}_{k_{i}+2l_i},
              \mathcal S^{E_i}_{k_{i}+2(l_i+1)},
              \cdots,
             \mathcal S^{E_i}_{k_{i}+2m_i}
              \right\}, \\
       \bigcup^{n}_{i=0}
        \left\{a_{i}\mathcal C^{E_i}_{k_{i}},
              a_{i}\mathcal C^{E_i}_{k_{i}+2}
              \cdots,
               a_{i}\mathcal C^{E_i}_{k_{i}+2m_i},
               b_{i}\mathcal S^{E_i}_{k_{i}+2m_i},
               b_{i}S^{E_i}_{k_{i}+2(m_i-1)},
              \cdots,
               b_{i}S^{E_i}_{k_{i}+2l_i}
              \right\}
              \backslash\{0\},
         %&\bigcup^{n}_{i=0}
        %\left\{ C^{E_i}_{1},
         %      C^{E_i}_{3}
         %      \cdots,
         %      C^{E_i}_{2m_i+1},
         %      S^{E_i}_{2m_i+1},
         %      S^{E_i}_{2m_i-1},
         %     \cdots,
         %      S^{E_i}_{1}
         %     \right\},
    \end{array}
    \end{align}
    where $k_i\in\{0,1\}$, $l_i=\frac{1+(-1)^{k_i}}{2}$, and $E_0,E_1,\cdots,E_n$ are non-intersecting open intervals contained in $(0,\frac{\pi}{\nu})$ $($resp. $(0,\frac{\pi}{2})\cup(\frac{\pi}{2},\pi)$$)$ when $\nu\geq2$ $($resp. $\nu=1$$)$.
    %one of the following conditions is satisfied:
    % \begin{itemize}
    %   \item [(C.1)] $\nu\geq2$, and $E_0,E_1,\cdots,E_n$ are non-intersecting open intervals contained in $(0,\frac{\pi}{\nu})$.
    %   \item [(C.2)] $\nu=1$, and $E_0,E_1,\cdots,E_n$ are non-intersecting open interval contained in $(0,\frac{\pi}{2})\cup(\frac{\pi}{2},\pi)$.
       %sets such that either each $E_i$ is an interval contained in $(0,\frac{\pi}{2})\cup(\frac{\pi}{2},\pi)$, or each $E_i$ is an union $(e_{i,1},e_{i,2})\cup(\pi-e_{i,2},\pi-e_{i,1})$  contained in $(0,\frac{\pi}{2})\cup(\frac{\pi}{2},\pi)$.
    % \end{itemize}
    %\end{itemize}
    \item [(ii)] The following order set of functions is an ECT-system on $(-a_n,+\infty):$
        \begin{align}\label{eq7}
          \left\{1,y,\cdots,y^{m_0}\right\}\bigcup\left(\bigcup^{n}_{i=1}\left\{(y+a_i)^{\beta},y(y+a_i)^{\beta},\cdots,y^{m_i}(y+a_i)^{\beta}\right\}\right),
        \end{align}
        where $a_1>a_2>\cdots>a_n\in\mathbb R$, $m_1\geq m_2\geq \cdots\geq m_n\in\mathbb Z^+_0$, $m_0\in\mathbb Z^+_0\cup\{-1\}$ and $\beta\in(\mathbb R\backslash\mathbb Z^+_0)\cap(-\infty,m_0-m_1+1)$. Here we have used the convention: when $m_0=-1$ the set $\{1,y,\cdots,y^{m_0}\}$ is empty.
  \end{itemize}
\end{proposition}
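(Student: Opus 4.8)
The plan is to derive both statements as applications of Theorem~\ref{thm4}. In each case I would (a) exhibit the given family as a family $\mathcal F$ of integrals of the form \eqref{eq1}--\eqref{eq2} whose kernels $G_i$ are the restrictions of a single $G=(1-yg(t))^{-\alpha}$ as in \eqref{eq3}; (b) check that $g$ is monotonic on the ambient interval $E$ and that $\alpha$ lies in the admissible range $(\mathbb R\setminus\mathbb Z^-_0)\cup(\mathbb Z^-_0\cap(-\infty,1-\mathrm{Card}(\mathcal F)])$; and (c) verify the only remaining hypothesis of Theorem~\ref{thm4}, namely that each integrand block $\{f_{i,0},\dots,f_{i,m_i}\}$ is a CT-system on $E_i$. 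Step (c) is where essentially all the work lies; once it is settled, Theorem~\ref{thm4} delivers the ECT-property of $\mathcal F$ at once.

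For part (i) the natural choice is $g(\theta)=\cos\nu\theta$, so that $G=(1-y\cos\nu\theta)^{-\alpha}$ and the kernels in \eqref{eq4} are exactly the restrictions of this $G$, with weights $f_{i,j}$ equal to $\xi_{E_i}(\theta)\cos k\theta$ or $\xi_{E_i}(\theta)\sin k\theta$. On $(0,\pi/\nu)$ one has $\nu\theta\in(0,\pi)$, so $\cos\nu\theta$ is strictly monotone, and $\alpha\in\mathbb R\setminus\mathbb Z^-_0$ is admissible; thus (b) holds. For (c) I would first discard the nowhere-zero analytic factor $\xi_{E_i}$, which alters neither the zeros nor their multiplicities and so preserves the CT-property. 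The surviving trigonometric blocks I would treat through the Chebyshev polynomials, writing $\cos k\theta=T_k(\cos\theta)$ and $\sin k\theta=\sin\theta\,U_{k-1}(\cos\theta)$ and passing to the variable $x=\cos\theta$, an orientation-reversing analytic diffeomorphism of $(0,\pi)$ onto $(-1,1)$ that preserves ECT- and CT-systems. A pure-sine ascending block then becomes $\sqrt{1-x^2}\cdot\{U_0(x),\dots,U_{m_i-1}(x)\}$ and a pure-cosine block becomes $\{T_0(x),\dots,T_{m_i}(x)\}$; each is a triangular reindexing of $\{1,x,\dots,x^m\}$ (times the nowhere-zero factor $\sqrt{1-x^2}$ in the sine case), hence an ECT-system, as needed.

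The genuinely delicate case of (c), and the main obstacle in part (i), is the \emph{mixed} block appearing in the second lines of \eqref{eq5} and \eqref{eq6}, where one must show that the ordered system $\{1,\cos\theta,\dots,\cos m_i\theta,\sin m_i\theta,\dots,\sin\theta\}$ (or its stepped analogue) is a CT-system. Here I would invoke the classical Wronskian criterion for ECT-systems (recalled in Section~2), i.e.\ check that the Wronskian of every initial segment is non-zero on $E_i$. After the substitution $x=\cos\theta$ the block becomes $\{T_0(x),\dots,T_{m_i}(x),\ \sqrt{1-x^2}\,U_{m_i-1}(x),\dots,\sqrt{1-x^2}\,U_0(x)\}$, and one must show all of its initial Wronskians are non-vanishing on $(-1,1)$; the specific ordering (all cosines first, sines appended in \emph{descending} frequency) is tailored to make this computation succeed. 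This also explains the hypothesis $a_i\ge b_i$: a descending pure-sine initial segment would already fail to be a T-system, since $\sin m_i\theta$ vanishes inside the interval, whereas the cosine base absorbs the potential zeros. Finally, the stepping by two in \eqref{eq6} forces the integrand Chebyshev variable to be $\cos2\theta$ when $\nu=1$ (monotone only on each half of $(0,\pi)$), which is what dictates the restriction to $(0,\frac{\pi}{2})\cup(\frac{\pi}{2},\pi)$, while for $\nu\ge2$ the variable $\cos\nu\theta$ already serves on $(0,\frac{\pi}{\nu})$.

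Part (ii) I would again route through Theorem~\ref{thm4}, now taking $\alpha=-\beta$ so that $G=(1-yg(t))^{\beta}$; the assumption $\beta\in\mathbb R\setminus\mathbb Z^+_0$ is precisely $\alpha=-\beta\in\mathbb R\setminus\mathbb Z^-_0$, placing $\alpha$ in the admissible range. The key step, and the main obstacle here, is to produce an honest integral representation exhibiting \eqref{eq7} (up to a triangular change of basis within each block, which preserves the ECT-property) as a family $\mathcal F$: one must choose a single monotone $g$ and pairwise-disjoint intervals $E_i$ whose positions encode the numbers $a_i$, together with weights $f_{i,j}$, so that $\int_{E_i}f_{i,j}(t)(1-yg(t))^{\beta}\,dt$ reproduces $y^j(y+a_i)^\beta$ for $i\ge 1$ and the monomials $y^j$ for the block $i=0$. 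Once such a representation is in hand, hypothesis (c) reduces to showing that the weight blocks $\{f_{i,0},\dots,f_{i,m_i}\}$ are CT-systems on $E_i$; by construction these are of monomial type and are handled exactly as the polynomial systems above. I expect the upper bound $\beta<m_0-m_1+1$ to emerge from the integrability and uniform-convergence requirements built into the setup of \eqref{eq1}--\eqref{eq2} (absolute integrability of the kernels and of their $y$-derivatives up to order $\mathrm{Card}(\mathcal F)-1$), that is, from the convergence of the representing integrals at the relevant endpoints. Pinning down this representation together with the precise convergence conditions is the crux of part (ii).
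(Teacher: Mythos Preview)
Your treatment of part~(i) is essentially the paper's: apply Theorem~\ref{thm4} with $g(\theta)=\cos\nu\theta$, strip the non-vanishing factor $\xi_{E_i}$, and reduce everything to the CT-property of the trigonometric integrand blocks. The paper packages that verification as Proposition~\ref{prop7}, proved in the appendix by exactly the Chebyshev-polynomial identities and Wronskian computations you outline, including the recursive determinant reduction for the mixed cosine--sine block.

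Part~(ii), however, has a genuine gap. Committing to $\alpha=-\beta$ and seeking an integral representation of the \emph{original} family \eqref{eq7} --- polynomial block included --- cannot succeed: for $\beta\notin\mathbb Z^+_0$ no integral $\int_{E_0}f_{0,j}(t)(1-yg(t))^\beta\,dt$ with monotone $g$ yields the pure monomials $1,y,\dots,y^{m_0}$, and your choice of $\alpha$ carries no trace of $m_0$, so the bound $\beta<m_0-m_1+1$ has nowhere to come from. The paper takes a different route. It first applies Lemma~\ref{lem2.3} ($m_0{+}1$ differentiations) to remove the polynomial block, reducing to the ECT-property of $\bigcup_{i=1}^{n}\mathcal A_{m_i,a_i,\beta-m_0-1}$ with $\mathcal A_{k,a,\gamma}=\{(y+a)^{\gamma},\dots,y^k(y+a)^{\gamma}\}$. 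Only then does it invoke Theorem~\ref{thm4}, with $g(t)=-1/t$, $\alpha=2+m_0-\beta$, $f_{i,j}=t^{j-\alpha}$, and $E_i=(a_{i+1},a_i)$ (taking $a_0=+\infty$). The integrals $I_{i,j}=\int_{E_i}t^j(t+y)^{-\alpha}\,dt$ are elementary and evaluate to boundary terms involving $(y+a_i)^{1-\alpha}$ \emph{and} $(y+a_{i+1})^{1-\alpha}$, so each $I_{i,j}$ mixes two adjacent $\mathcal A$-blocks rather than sitting inside one; the ECT-property is then transferred by a span-equality/dimension argument across the whole filtration, not by a triangular change within a single block. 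In this setup $\beta<m_0-m_1+1$ is exactly $\alpha>1+m_1$, the convergence condition for $I_{0,j}$ at $a_0=+\infty$ --- so your intuition that the bound is an integrability constraint is correct, but only after the Lemma~\ref{lem2.3} reduction.
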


Recall that Gasull et al \cite{taubes2} in 2012 gave a useful ECT-system by particularizing the family \eqref{eq18} with $g=\sin\theta$, $\alpha\in\mathbb R\backslash\mathbb Z_0^-$ and $E=[0,2\pi]$. This is in fact an equivalent type of the family $\big\{\mathcal C^{(0,\pi)}_{0},\cdots,\mathcal C^{(0,\pi)}_{m}\big\}$ with $\xi_{(0,\pi)}=1$ and $\nu=1$. And it is proved in \cite{taubes18}
 that the family \eqref{eq43} (a particular type of the family \eqref{eq7}) mentioned above is an ET-system under some hypothesis. Our result generalizes and improves in a unified way these previous works.

The families presented in Proposition \ref{prop1} have also appeared in some other papers.
For instance,
 the Chebyshev property of the family $\big\{\mathcal C^{[0,\pi]}_{0},\cdots,\mathcal C^{[0,\pi]}_{m}\big\}$ with $\xi_{[0,\pi]}=1$ and $\nu=1$ had been applied by Huang et al in \cite{taubes8}.
%in [Huang SIADS 2020] the authors study the second order perturbation of an Abel equation based on.
Prohens and Torregrosa \cite{taubes19} in 2014 used the explicit expressions of the families in \eqref{eq5} with $\nu=1$ and $\alpha\in\mathbb Z^+$ to investigate periodic orbits via the second order perturbations of some quadratic isochronous centers.
These explicit expressions in some particular forms has also been utilized in \cite{taubes13,taubes16} for obtaining the lower bounds on the Hilbert number of some planar piecewise smooth differential systems.

%\begin{theorem}
%  \label{thm2}
%The family of integrals $\mathcal{F}$ is an ECT-system if the following hypotheses are satisfied:
%\begin{itemize}
%  \item [(H.1)] For each $i\in\{0,\cdots,n\}$, the order set of functions $\{f_{i,0},\cdots,f_{i,m_i}\}$ is a CT-system on $E_i$.
%  \item [(H.2)] For each $((t_{0,0},\cdots,t_{0,m_0}),\cdots,(t_{n,0},\cdots,t_{n,m_n}))\in E_0^{m_0}\times\cdots\times E_n^{m_n}$ such that $t_{i,j}\neq %t_{i,j'}$ for $j\neq j'$, the order set of functions $\bigcup^{n}_{i=0}\{G_i(\cdot,t_{i,0}),\cdots,G_i(\cdot,t_{i,m_i})\}$ is an ECT-system with respect to %variable $y$ on $U$.
%\end{itemize}
%\end{theorem}

Here we illustrate three applications, which show that our criterions are useful and powerful in solving concrete problems on the maximum number of isolated zeros of the first order Melnikov functions, which are derived from some integrable non-Hamiltonian systems with smooth/piecewise smooth perturbations. It is noteworthy that such maximum number bounds the number of limit cycles bifurcating from the period annulus of the unperturbed systems up to first order analysis (see Section 2 and Section 5 for details).

%The first example is the smooth perturbation of generalized Abel equations, which improves the %studies in paper [...]. We mainly apply the families given in \eqref{eq5} to determine upper %bounds for the number of limit cycles
%which appear when one performs a first order analysis in $\varepsilon$ of the equations
%\begin{align}\label{eq15}
%  \frac{dx}{dt}=\frac{\nu\cos\nu t}{p-1} x^p+\varepsilon P_m(t)x^q,
%\end{align}
%where $\nu\in\mathbb Z^+$, $p,q\in\mathbb Z^+\backslash\{1\}$ with $p\neq q$, and
% $P_m$ is trigonometrical polynomials of degrees $m$.
%\begin{align*}
%       P(t) = \sum_{k=0}^{m} \left( a_{ik}^{\pm}\cos(kt) + b_{ik}^{\pm}\sin(kt) \right),
%\qquad Q(t) = \sum_{k=0}^{n} \left( c_{ik}^{\pm}\cos(kt) + d_{ik}^{\pm}\sin(kt) \right).
%\end{align*}

The first one is an example of the planar differential systems with homogeneous nonlinearities. This type of systems is extensively studied in the literature (see for instance \cite{taubes1,taubes3,taubes4,taubes5,taubes6,taubes7,taubes9,taubes10}).
For this kind of systems Gaull et al \cite{taubes4} in 2015 got the next results.

\begin{proposition}[\cite{taubes4}]\label{prop3}
Consider the planar polynomial differential systems with homogeneous nonlinearities of degree $n\geq2$ and a singularity at the origin. Let $\mathcal H(n)$ be the maximum number of limit cycles surrounding the origin that such systems can have. Then the estimate of $\mathcal H(n)$ with respect to the types of the singularity and the parity of $n$ are given in Table \ref{table1}, where the notation ``{\bf ?}'' represents the unknown result.
%There exists planar polynomial differential system with homogeneous nonlinearity of any given odd degree $n\geq3$, such that the system has $[\frac{n}{2}]$ (resp. $[\frac{n}{2}]+1$) limit cycles surrounding a weak focus (resp. strong focus).
\end{proposition}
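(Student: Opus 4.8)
The plan is to read Proposition~\ref{prop3} as a collection of upper and lower bounds on $\mathcal H(n)$, organized into a table whose rows and columns are indexed by the type of the singularity at the origin (center/focus, node, saddle, degenerate) and by the parity of $n$, and to establish each definite entry separately; the boxes marked ``{\bf ?}'' are precisely those where the two bounds fail to meet with the tools at hand. First I would bring the system to a normal form adapted to homogeneous nonlinearities, writing it as $\dot x=L_1(x,y)+X_n(x,y)$, $\dot y=L_2(x,y)+Y_n(x,y)$, where $L_1,L_2$ are linear and $X_n,Y_n$ are homogeneous of degree $n$, so that the singularity type is read off the linear part (or, in the degenerate case, from $X_n,Y_n$). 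Passing to polar coordinates converts the orbits into solutions of a scalar equation $dr/d\theta=r^nF(\theta)/\bigl(1+r^{n-1}G(\theta)\bigr)$ with $F,G$ trigonometric polynomials; homogeneity is exactly what forces the dependence on $r$ to occur only through the single power $r^{n-1}$, and this is the structural feature I would exploit throughout. At this stage topological input already disposes of several boxes: a limit cycle must enclose a total index $+1$, so the saddle case (index $-1$) and the purely monodromic counting differ, and the parity of $n$ controls the configuration of singularities on the line at infinity.

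For the \emph{lower bounds} I would exhibit, box by box, an explicit one-parameter family of homogeneous perturbations of an integrable center and show that its first-order Melnikov (averaged) function can be forced to have the prescribed number of simple zeros. Each simple zero persists as a hyperbolic limit cycle by the implicit function theorem, so producing $k$ simple zeros certifies $\mathcal H(n)\ge k$. The number of free coefficients in $X_n,Y_n$ grows linearly in $n$, and matching this count against the parity of $n$ is what separates the even and odd entries of the table.

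For the \emph{upper bounds} I would linearize about the period annulus of the relevant integrable center and express the Melnikov function as a linear combination of integrals of the form $\mathcal C^{E}_k$ and $\mathcal S^{E}_k$ in \eqref{eq4}, with $g=\cos\nu\theta$ dictated by the normal form and $E$ parametrizing the annulus. The decisive step is to recognize this combination as lying in the span of one of the families of Proposition~\ref{prop1}: once the appropriate order set is shown to be an ECT-system, every non-trivial combination has at most $\mathrm{Card}(\mathcal F)-1$ isolated zeros, and this number, read from the dimension of the span, is the sought estimate. Here the parity of $n$ enters by selecting whether only even harmonics $\mathcal C^{E}_{k_i+2j}$ or only odd ones occur, which is exactly the dichotomy between the families \eqref{eq5} and \eqref{eq6}.

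The hard part will be verifying the Chebyshev (ECT) property underlying each upper bound: classically this demands controlling the sign of large Wronskian and Gram determinants built from the integrands, and for several boxes the unavailable explicit form of the integrals makes the direct computation intractable. This is precisely the origin of the ``{\bf ?}'' entries, so for Proposition~\ref{prop3} as stated these boxes are simply left open. I would nonetheless point out that the natural way to discharge this obstacle is to replace the direct determinant analysis by the factorized criterion of Theorem~\ref{thm1}, which via Theorem~\ref{thm4} reduces the Chebyshev property of the integrals to the far more tractable Chebyshev properties of the integrand families $\{f_{i,0},\dots,f_{i,m_i}\}$ together with $\{G,\partial_yG,\dots\}$; once the sign condition of hypothesis (H) is settled, the remaining counting is routine, and this is the route by which the open entries can subsequently be filled.
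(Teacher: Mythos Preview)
Proposition~\ref{prop3} is not proved in this paper at all: it is quoted verbatim from \cite{taubes4} as background (``Gasull et al \cite{taubes4} in 2015 got the next results''), and the table is reproduced to set up the open ``{\bf ?}'' boxes that the present paper then addresses in Section~5.1 and Remark~\ref{rem2}. So there is no ``paper's own proof'' to compare your proposal against; the correct response to this statement is simply to cite \cite{taubes4}.

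Beyond that, your sketch does not match the content of Table~\ref{table1}. Nearly every entry for $n$ odd is a \emph{lower} bound only ($\ge[\frac{n}{2}]+1$ or $\ge[\frac{n}{2}]$), so your ``upper bounds via ECT-systems'' paragraph is aimed at claims the proposition does not make. Conversely, the entries $=0$ for $n$ even with node/saddle/nilpotent linear part are exact, and in \cite{taubes4} they follow from a symmetry argument specific to even degree (the vector field is invariant under $(x,y)\mapsto(-x,-y)$, forcing periodic orbits to come in symmetric pairs and ruling out a single cycle around the origin when the origin itself is not a focus), not from an index count; your remark that ``a limit cycle must enclose total index $+1$, so the saddle case\ldots'' is in tension with the table, since for $n$ odd the saddle column still reads $\ge[\frac{n}{2}]+1$. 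Finally, the machinery of Theorems~\ref{thm1} and~\ref{thm4} is what the present paper uses to \emph{resolve} the ``{\bf ?}'' entries (see Proposition~\ref{p*1}), not to reprove the already-known boxes of Proposition~\ref{prop3}.
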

\begin{table}\label{table1}
 \centering
 \renewcommand\arraystretch{1.5}
\begin{tabular}{|l|*{6}{c|}}\hline
 %\backslashbox{$m$}{$n$}
 &Node &Saddle &Strong focus &Weak focus & Nilpotent singularity \\ \hline
 $n$ is odd &$\geq\left[\frac{n}{2}\right]+1$ &$\geq\left[\frac{n}{2}\right]+1$ &$\geq\left[\frac{n}{2}\right]+1$ &$\geq\left[\frac{n}{2}\right]$ &$\geq\left[\frac{n}{2}\right]$  \\ \hline
 $n$ is even &=0 &=0 &\bf ? &\bf ? &=0 \\ \hline
\end{tabular}
\vskip0.1cm
\caption{Estimates of $\mathcal H(n)$. }\vspace{-0.7cm}
\end{table}

As shown in the table, they were not able to obtain the estimate of $\mathcal H(n)$ when $n$ is even and the origin is a focus. Instead, they posed the next conjecture:
%\begin{conjecture}\label{conj}
%A similar result of Proposition \ref{prop3} to the case when $n$ is even should be true.
%\end{conjecture}

``{\em The result for the remaining cases could be similar to the ones when $n$ is odd}.''

%among which we will be concerned with the following smooth perturbation for a uniform isochronous center:
By applying our criterion for ETC systems to the system
\begin{align}\label{eq9}
\frac{d}{dt}
  \left(
\begin{aligned}
&x\\
&y
\end{aligned}
\right)
 =
  \left(
\begin{aligned}
&-y+\frac{1}{2m-1}x^2(x^2+y^2)^{m-1}+\varepsilon P^H_{2m}(x,y)\\
&x+\frac{1}{2m-1}xy(x^2+y^2)^{m-1}+\varepsilon Q^H_{2m}(x,y)
\end{aligned}
\right),
\end{align}
we provide a positive answer to the conjecture posed by Gasull et al mentioned above, where $P^{H}_{2m}$ and $Q^{H}_{2m}$ are homogeneous polynomials of degree $2m$, $m\in\mathbb Z^+$. Note that this last system is a homogeneous polynomial perturbation of a uniform isochronous center.
 For more details on its proof, see Section 5 and Remark \ref{rem2}.

%It should be mentioned that, in \cite{taubes4} the authors obtain for the first time that:
%there exists planar polynomial differential system with homogeneous nonlinearity of any given odd degree $n\geq3$, such that the system has $[\frac{n}{2}]$ (resp. $[\frac{n}{2}]+1$) limit cycles surrounding a weak focus (resp. strong focus).

The second application is on the system
\begin{align}\label{eq11}
\frac{d}{dt}
  \left(
\begin{aligned}
&x\\
&y
\end{aligned}
\right)
 =
  \left(
\begin{aligned}
&-y\prod_{i=1}^{n_1}(x-a_i)\prod_{j=1}^{n_2}(y-b_j)+\varepsilon P_m(x,y)\\
&x\prod_{i=1}^{n_1}(x-a_i)\prod_{j=1}^{n_2}(y-b_j)+\varepsilon Q_m(x,y)
\end{aligned}
\right),
\end{align}
where $P_m$, $Q_m$ are polynomials of degree $m$, and $a_i,b_j$ are non-zero real numbers with $a_i\neq a_j$ and $b_i\neq b_j$ for $i\neq j$,  which is a polynomial perturbation of an integrable system having a center at the origin and a family of vertical and/or horizontal lines of singular points.
Gasull et al \cite{taubes20} in 2012 studied the first order Melnikov function of the system to obtain the number of limit cycles via the Derivation-Division algorithm. Here we reobtain the result in a simple way by using ECT-system \eqref{eq7}.

The third application is on piecewise smooth planar differential systems separated by radials
\begin{align*}
\Sigma:=&\bigcup_{i=0}^{n}\left\{(x,y)=(r\cos\vartheta_i,r\sin\vartheta_i):r\in\mathbb R^+\right\},
\end{align*}
 where $n\geq1$ and $\vartheta_0<\vartheta_1<\cdots<\vartheta_n\in[-\pi,\pi)$. Let $A_i$, $i=0,1,\ldots,n$, be the angular sectors limited by $\vartheta_i$ and $\vartheta_{i+1}$, with $\vartheta_{n+1}=\vartheta_0$ by convention.
For piecewise differential systems separated by radials, there are many published results.  Li and Liu \cite{taubes13} in 2015  worked on system \eqref{eq10} with $n=1$, $\vartheta_0=-\frac{\pi}{2}$ and $\vartheta_1=\frac{\pi}{2}$ for $\Sigma$ (a separation straight line). Cardin and Torregrosa \cite{taubes29} in 2015 concerned the case with radials $\vartheta_0=0$ and $\vartheta_1=\frac{\pi}{2}$. Llibre and Zhang \cite{taubes14} in 2019 studied the case with three rays $\vartheta_0=-\pi$, $\vartheta_1=0$ and $\vartheta_2=\frac{\pi}{2}$. Buzzi et al \cite{taubes15} in 2020 focused on the case with four rays $\vartheta_0=-\pi$, $\vartheta_1=-\frac{\pi}{2}$, $\vartheta_2=0$ and $\vartheta_3=\frac{\pi}{2}$. In this direction there are also some other interesting results, see e.g. \cite{taubes15.3,taubes15.13,taubes15.22} for the models from different real phenomena; and \cite{taubes30} with symmetric rays defined by $\exp((n+1)\vartheta\sqrt{-1})=1$ in $\vartheta$.

Here we consider the system
 \begin{align}\label{eq10}
  \frac{d}{dt}
  \left(
\begin{aligned}
&x\\
&y
\end{aligned}
\right)
 =
 \left(
\begin{aligned}
&-y(1-ax)+\varepsilon P_m(x,y)\\
&x(1-ax)+\varepsilon Q_m(x,y)
\end{aligned}
\right),
\end{align}
where $a\neq0$ and $(P_m,Q_m)|_{A_{i}}=(P_{m,i},Q_{m,i})$ with $P_{m,i}$ and $Q_{m,i}$ being polynomials of degree $m$, $i=0,\cdots,n$.
By definition  $(P_m,Q_m)$ on $\Sigma$ obeys the Filippov convention \cite{taubes24}.
From the point of view in analyzing the structure of Melnikov functions for the system with above mentioned separation radials, our approach not only simplifies the calculation (no explicit expression of the integrals is needed) but also unifies several previous different treatings. For details we refer to Proposition \ref{prop8} and its proofs.

\vskip0.3cm
This paper is organized as follows. Section 2 provides some preliminary results, which are used in Section 3 to prove Theorems \ref{thm1}, \ref{thm2} and \ref{thm4}. Section 4 is devoted to giving the Chebyshev families in Proposition \ref{prop1}, and also shows in Remark \ref{rem1} the necessity of the conditions in this proposition.
The three applications of the new Chebyshev families to the three mentioned systems will be presented in Section 5.
The last part is an appendix. %Finally the appendix contains the proof of Proposition \ref{prop7} and two simple observations used in section 5.

\section{Preliminaries}\label{an apprpriate label}
For abbreviation we use the notations
\begin{align*}
  x_0,\cdots,x_{k}:=\bm{x}_{k}
  \ \text{and}\
  x_{i,0},\cdots,x_{i,k}:=\bm{x}_{i,k}.
\end{align*}
Accordingly, we write
\begin{align*}
 &f_{i,0},f_{i,1},\cdots,f_{i,k}:=\bm{f}_{i,k},\\
 &I_{i,0},I_{i,1},\cdots,I_{i,k}:=\bm{I}_{i,k},\\
 &I_{0,0},I_{0,1},\cdots,I_{0,k_0},\cdots,I_{s,0},I_{s,1},\cdots,I_{s,k_s}:=\bm I_{0,k_0},\cdots,\bm{I}_{s,k_s},\\
 & W[f_0,f_1,\cdots,f_{k}](t):=W[\bm f_{k}](t),\\
 &D[f_0,f_1\cdots,f_{k};t_0,t_1,\cdots,t_{k}]:=D[\bm f_{k};\bm{t}_{k}],
\end{align*}
for the families of functions, the continuous and discrete Wronskians, respectively.

\subsection{Chebyshev Systems}
As introduced in Section 1, the theory of Chebyshev systems provides a classical tool to study the number of zeros of the linear combinations of some given functions. Here we introduce some related known results (see e.g. \cite{taubesECT1,taubesECT2,taubes21.7} for more details), and also present some of our results on the Chebyshev property for several new families.

First it is clear from definition that if the order set $\{f_0,f_1,\cdots,f_m\}$ is an ECT-system on $E$, then it is a CT-system on $E$, and all the elements are linearly independent. Moreover, the following criterion  enables us to characterize an ECT/CT-system in terms of Wronskians.

%%%%%%%%%%%%%%%%%%%%%%%%%%%%%%%%%%%%%%%%%%%%%%%%%%%%%%%%%%%%%%%%%%%%%%%%%%%%%%%%%%%%%%%%%%%%%%%%%%%%%%%%%%%%%%%%%%%%%%%%%%%%%%%%%%%%%%%%%%%%%%%%%%%%%%%%%%%%%%%%%%%%%%%%%%%%%%%%%%%%%%%%%%%%%%%%%%%%%%%%%%%

\begin{lemma}[\cite{taubes21.7}]\label{lem2.2}
Let $ f_0,f_1,\cdots,f_m $ be analytic functions on an interval $E$ of $\mathbb{R}$. The following statements hold.
\begin{itemize}
  \item [(i)] The order set $\{f_0,f_1,\cdots,f_m\}$ is a CT-system on $E$ if and only if, for each $ k=0,1,\cdots,m $,
  \begin{align*}
    D[\bm f_{k};\bm t_{k}]\neq0 \text{ for all } \bm t_{k}\in E^{k+1} \text{ such that } t_i\neq t_j \text{ for } i\neq j.
  \end{align*}
  \item [(ii)] The order set $\{f_0,f_1,\cdots,f_m\}$ is an ECT-system on $E$ if and only if, for each $ k=0,1,\cdots,m $,
  \begin{align*}
    W[\bm f_{k}](t)\neq0 \text{ for all } t\in E.
  \end{align*}
\end{itemize}
\end{lemma}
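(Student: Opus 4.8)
The plan is to exploit that both the CT and the ECT property are, by definition, the conjunction over $k=0,\dots,m$ of the corresponding level-$k$ property, and to match each level-$k$ property with the non-vanishing of the level-$k$ (discrete, resp. continuous) Wronskian. Throughout I take a $T$-system to consist of linearly independent functions, as is standard; for analytic families this is harmless, since any ECT-system is automatically linearly independent, as recalled in the text.

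For part (i) the level-$k$ equivalence is purely linear-algebraic. A combination $F=\sum_{j=0}^k\lambda_jf_j$ vanishes at pairwise distinct points $t_0,\dots,t_k$ precisely when $(\lambda_0,\dots,\lambda_k)$ lies in the kernel of the matrix $\big(f_j(t_i)\big)_{0\le i,j\le k}$, whose determinant is $D[\bm f_k;\bm t_k]$. Hence if $D[\bm f_k;\bm t_k]\neq0$ for every distinct tuple, no nontrivial combination can have $k+1$ distinct zeros, and non-vanishing of $D$ also forces linear independence (an identically zero nontrivial combination would annihilate $D$ at every tuple), so—using analyticity to identify the finite zero set with isolated zeros—$\{f_0,\dots,f_k\}$ is a $T$-system. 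Conversely, a vanishing $D[\bm f_k;\bm t_k]$ at a distinct tuple yields a nontrivial, not identically zero combination with $k+1$ isolated zeros, contradicting the $T$-property. Applying this at each $k$ gives (i).

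For part (ii) only the forward implication is level-wise, and it is short. If $\{f_0,\dots,f_m\}$ is an ECT-system then it is linearly independent, and if $W[\bm f_k](t^\ast)=0$ for some $k$ and some $t^\ast\in E$ then the columns $\big(f_j(t^\ast),\dots,f_j^{(k)}(t^\ast)\big)^{\top}$, $j=0,\dots,k$, are linearly dependent; the resulting nontrivial $F=\sum_{j=0}^k\lambda_jf_j$ satisfies $F(t^\ast)=\dots=F^{(k)}(t^\ast)=0$, a zero of multiplicity $\ge k+1$, and by linear independence $F\not\equiv0$, contradicting the $ET$-property at level $k$. The converse is the heart of the matter and will be proved by induction on $m$ through the derivation–division mechanism. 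Assuming all $W[\bm f_k]\neq0$ on $E$, one first notes $f_0=W[\bm f_0]$ is non-vanishing, so for a nontrivial $F$ the quotient $F/f_0$ has the same zeros (with multiplicity) as $F$, and by Rolle's theorem $(F/f_0)'$ has at least one fewer. Now $(F/f_0)'=\sum_{j=1}^m\lambda_j g_j$ with $g_j:=(f_j/f_0)'$, and the classical reduction identity $W[g_1,\dots,g_k]=W[\bm f_k]\,f_0^{-(k+1)}$ shows that every Wronskian of $\{g_1,\dots,g_m\}$ is non-vanishing; the induction hypothesis then bounds the zeros of $(F/f_0)'$ by $m-1$, whence $F$ has at most $m$ zeros counting multiplicity. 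Non-triviality is preserved because a vanishing nontrivial combination would force some $W[\bm f_j]\equiv0$. Running the argument through each truncation $\{f_0,\dots,f_k\}$ yields the ECT-property.

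The main obstacle is this last inductive step: one must track multiplicities carefully both through the division by the non-vanishing $f_0$ and through the Rolle reduction (a zero of multiplicity $\mu$ of $F/f_0$ contributes multiplicity $\mu-1$ to $(F/f_0)'$, plus the interlacing zeros), and one must verify the Wronskian reduction identity in the form above—this is where the completeness hypothesis, i.e. the non-vanishing of all the lower Wronskians rather than merely the top one, is genuinely used. As guiding intuition throughout, it is worth recording that the continuous Wronskian is, up to a nonzero constant, the confluent limit of the discrete one as $t_0,\dots,t_k\to t$ after dividing by the Vandermonde factor $\prod_{i<j}(t_j-t_i)$, which is precisely why distinct zeros are governed by $D$ (part (i)) and coalescing zeros, i.e. multiplicities, by $W$ (part (ii)).
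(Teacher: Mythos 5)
The paper offers no proof of Lemma \ref{lem2.2} --- it is quoted as a classical result from the cited reference \cite{taubes21.7} --- and measured against the standard proof there, your argument is correct and essentially the same: part (i) is the usual kernel/determinant argument for the matrix $\big(f_j(t_i)\big)$, and part (ii) is the classical derivation--division induction, using the reduction identity $W[(f_1/f_0)',\dots,(f_k/f_0)']=W[f_0,\dots,f_k]\,f_0^{-(k+1)}$ together with Rolle counting with multiplicities. You also correctly isolate the one delicate point in the paper's phrasing, namely that ``at most $m$ isolated zeros'' must be read with the linear-independence convention (automatic for genuine T-systems, since an identically zero nontrivial combination kills every discrete Wronskian) for the ``only if'' directions to hold.
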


%%%%%%%%%%%%%%%%%%%%%%%%%%%%%%%%%%%%%%%%%%%%%%%%%%%%%%%%%%%%%%%%%%%%%%%%%%%%%%%%%%%%%%%%%%%%%%%%%%%%%%%%%%%%%%%%%%%%%%%%%%%%%%%%%%%%%%%%%%%%%%%%%%%%%%%%%%%%%%%%%%%%%%%%%%%%%%%%%%%%%%%%%%%%%%%%%%%%%%%%%%%

\begin{lemma}[\cite{taubesECT2}]\label{lem2.3}
The order set of functions $\{1,f_0,\cdots,f_{m}\}$ is an ECT-system on an interval $ E $, if and only if $ \{f_{0}', \cdots, f_{m}'\} $ is an ECT-system on $E$.
\end{lemma}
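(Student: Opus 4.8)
The plan is to reduce the entire statement to the Wronskian characterization of ECT-systems furnished by Lemma~\ref{lem2.2}(ii), and then to produce a single determinantal identity that links the two families. First I would record what that criterion says for each side. By Lemma~\ref{lem2.2}(ii), the ordered set $\{1,f_0,\cdots,f_m\}$ is an ECT-system on $E$ exactly when every initial-segment Wronskian is nowhere vanishing on $E$; here the relevant segments are $\{1\}$, $\{1,f_0\}$, $\{1,f_0,f_1\}$, $\ldots$, $\{1,f_0,\cdots,f_m\}$. Likewise, $\{f_0',\cdots,f_m'\}$ is an ECT-system on $E$ precisely when $W[f_0',\cdots,f_k'](t)\neq0$ on $E$ for every $k=0,\cdots,m$. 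Thus the whole equivalence will follow once I match these two lists of nonvanishing conditions term by term.

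The crux is the identity
\[
W[1,f_0,\cdots,f_k](t)=W[f_0',\cdots,f_k'](t), \qquad k=0,\cdots,m.
\]
To establish it I would write out the $(k+2)\times(k+2)$ matrix defining $W[1,f_0,\cdots,f_k]$, whose rows are the derivatives of orders $0$ through $k+1$. Because every derivative of the constant function $1$ of positive order vanishes, its column equals $(1,0,\cdots,0)^{\top}$. Expanding the determinant along this column leaves exactly the minor obtained by deleting the zeroth row and zeroth column; after the index shift $i\mapsto i-1$ its entries are $f_j^{(i+1)}(t)$ for $0\leq i,j\leq k$, which by the definition of the continuous Wronskian is $W[f_0',\cdots,f_k'](t)$. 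This computation is purely algebraic and is really the entire content of the lemma.

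Finally I would assemble the equivalence. The segment $\{1\}$ contributes only the trivial requirement $W[1]=1\neq0$, which holds automatically, while for each $k=0,\cdots,m$ the identity above converts ``$W[1,f_0,\cdots,f_k]\neq0$ on $E$'' into ``$W[f_0',\cdots,f_k']\neq0$ on $E$''. Hence the list of conditions characterizing the ECT property of $\{1,f_0,\cdots,f_m\}$ coincides verbatim with the one characterizing the ECT property of $\{f_0',\cdots,f_m'\}$, and both directions of the ``if and only if'' fall out at once. I do not expect a genuine obstacle here; the only points demanding care are arranging the functions so that the constant occupies the zeroth slot (so that the initial segments correspond correctly) and keeping the bookkeeping of derivative orders straight in the cofactor expansion.
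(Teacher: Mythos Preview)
Your argument is correct and is the standard proof of this fact. Note, however, that the paper does not supply its own proof of Lemma~\ref{lem2.3}; it simply cites the result from \cite{taubesECT2}, so there is no in-paper proof to compare against. Your reduction to the Wronskian criterion of Lemma~\ref{lem2.2}(ii) together with the cofactor identity $W[1,f_0,\cdots,f_k]=W[f_0',\cdots,f_k']$ is exactly the expected route.
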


\begin{lemma}[\cite{taubesECT2}]\label{lem2.4}
Let $ \xi $ be a smooth non-vanishing function on an interval $ E $. Then the order set of functions  $ \{\xi f_0, \xi f_1, \cdots , \xi f_m \} $ is an ECT-system on $ E $ if and only if $\{f_0, f_1, \cdots , f_m\} $ is an ECT-system on $E$.
\end{lemma}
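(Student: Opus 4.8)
The final statement to prove is Lemma \ref{lem2.4}: that multiplication by a smooth non-vanishing function $\xi$ preserves the ECT-system property. Although this is quoted as a known result from \cite{taubesECT2}, I will sketch a self-contained proof based on the Wronskian characterization supplied by Lemma \ref{lem2.2}(ii).

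The plan is to reduce everything to a statement about Wronskians. By Lemma \ref{lem2.2}(ii), the order set $\{f_0,\cdots,f_m\}$ is an ECT-system on $E$ precisely when $W[\bm f_k](t)\neq 0$ for all $t\in E$ and each $k=0,\cdots,m$. So the entire lemma will follow once I establish the pointwise identity
\begin{align*}
W[\xi f_0,\cdots,\xi f_k](t)=\xi(t)^{\,k+1}\,W[f_0,\cdots,f_k](t)
\end{align*}
for every $k$. Since $\xi$ is non-vanishing, the factor $\xi^{k+1}$ never vanishes, so $W[\xi\bm f_k]$ and $W[\bm f_k]$ have exactly the same zero set; applying the Wronskian criterion in both directions then gives the equivalence of the two ECT-system properties for each $k$, hence for the whole family.

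First I would prove the Wronskian scaling identity. The $j$-th derivative of $\xi f_l$ is, by the Leibniz rule, $\sum_{p=0}^{j}\binom{j}{p}\xi^{(j-p)}f_l^{(p)}$, so each column of $W[\xi\bm f_k]$ is obtained from the columns of the ``pure derivative'' data by a fixed lower-triangular linear combination that is the same for all functions $f_l$. Concretely, the matrix $(\,(\xi f_l)^{(i)}\,)_{0\le i,l\le k}$ equals the product $L(t)\cdot M(t)$, where $M(t)=(f_l^{(i)}(t))$ is the matrix whose determinant is $W[\bm f_k](t)$ and $L(t)=\big(\binom{i}{p}\xi^{(i-p)}(t)\big)_{0\le p,i\le k}$ is lower triangular with diagonal entries all equal to $\xi(t)$. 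Taking determinants and using $\det L(t)=\xi(t)^{k+1}$ yields the desired identity. This step is purely algebraic and routine.

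The only point requiring a little care is the direction of the equivalence and the quantification over $k$: the ECT-system property is not just a statement about the top Wronskian $W[\bm f_m]$ but about all of $W[\bm f_0],\cdots,W[\bm f_m]$ simultaneously (this is the ``extended complete'' nature encoded in Lemma \ref{lem2.2}(ii)). The scaling identity handles each $k$ uniformly, so no genuine obstacle arises here; I merely need to note that the same non-vanishing function $\xi$ serves for every $k$ and that $\xi^{k+1}\neq0$ on all of $E$. I would therefore not expect a hard step: the content is entirely captured by the Leibniz/triangular-matrix computation, and the rest is a direct appeal to Lemma \ref{lem2.2}(ii). If one preferred to avoid the Wronskian criterion altogether, an alternative is to argue directly from the definition, observing that $\lambda_0(\xi f_0)+\cdots+\lambda_m(\xi f_m)=\xi(\lambda_0 f_0+\cdots+\lambda_m f_m)$ and that multiplication by the non-vanishing $\xi$ preserves both the zeros and their multiplicities; but the Wronskian route is cleaner and dovetails with the surrounding lemmas.
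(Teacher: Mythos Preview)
Your proof is correct. The Wronskian scaling identity $W[\xi f_0,\cdots,\xi f_k]=\xi^{k+1}W[f_0,\cdots,f_k]$ via the Leibniz rule and the lower-triangular factorization is the standard argument, and together with Lemma~\ref{lem2.2}(ii) it yields the equivalence immediately.

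As for comparison: the paper does not actually prove Lemma~\ref{lem2.4} at all; it is simply quoted from \cite{taubesECT2} as a known fact. So you have supplied a self-contained proof where the paper gives none. Your approach is the expected one and aligns with how the result is typically derived (and indeed with the spirit of the cited reference). The alternative you mention at the end---arguing directly from the definition via $\sum\lambda_l(\xi f_l)=\xi\sum\lambda_l f_l$ and noting that multiplication by a non-vanishing smooth function preserves zeros with multiplicity---is also valid and perhaps even shorter, but the Wronskian route integrates seamlessly with the surrounding machinery of the paper.
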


\begin{theorem}[\cite{taubesECT1}]\label{thm2.5}
A finite family of linearly independent $($real of complex$)$ analytic functions has non-vanishing Wronskian.
\end{theorem}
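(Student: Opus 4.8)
The plan is to prove the contrapositive by induction on the number of functions: if $f_0,\cdots,f_m$ are analytic on a connected interval $E$ and their Wronskian $W[f_0,\cdots,f_m]$ vanishes identically, then $f_0,\cdots,f_m$ are linearly dependent. This is equivalent to the assertion, since ``non-vanishing Wronskian'' here means that $W[f_0,\cdots,f_m]$ is not the zero function. The base case $m=0$ is immediate, because $W[f_0]=f_0$, so $W[f_0]\equiv0$ forces $f_0\equiv0$. For the inductive step I would assume the claim for all families of size at most $m$ and suppose $W[f_0,\cdots,f_m]\equiv0$.

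The first move is to examine the sub-Wronskian $W[f_0,\cdots,f_{m-1}]$. If it vanishes identically, the induction hypothesis makes $f_0,\cdots,f_{m-1}$ linearly dependent, hence so is the full family. Otherwise, since a nonzero analytic function on a connected interval has only isolated zeros, the set $\Omega=\{t\in E:\ W[f_0,\cdots,f_{m-1}](t)\neq0\}$ is open and dense in $E$. On $\Omega$ the functions $f_0,\cdots,f_{m-1}$ are linearly independent solutions of the monic $m$-th order linear ODE obtained by normalizing $W[f_0,\cdots,f_{m-1},y]=0$; concretely, expanding the determinant $W[f_0,\cdots,f_{m-1},y]$ along its last column and dividing by $W[f_0,\cdots,f_{m-1}]$ produces an operator $L[y]=y^{(m)}+a_{m-1}y^{(m-1)}+\cdots+a_0 y$ with analytic coefficients on $\Omega$, whose solution space is $m$-dimensional and spanned by $f_0,\cdots,f_{m-1}$.

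Now the identity $W[f_0,\cdots,f_{m-1},y]=W[f_0,\cdots,f_{m-1}]\,L[y]$ together with the hypothesis $W[f_0,\cdots,f_m]\equiv0$ yields $L[f_m]=0$ on $\Omega$. Picking any connected component $J$ of $\Omega$, the uniqueness theory for linear ODEs forces $f_m=\sum_{i=0}^{m-1}c_i f_i$ on $J$ for suitable constants $c_i$. The final step is where analyticity is indispensable: the function $f_m-\sum_{i=0}^{m-1}c_i f_i$ is analytic on the whole interval $E$ and vanishes on the open subinterval $J$, so by the identity principle for analytic functions it vanishes on all of $E$. This exhibits the linear dependence of $f_0,\cdots,f_m$ and completes the induction.

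I expect the delicate point to be the passage across the zeros of the sub-Wronskian. For merely smooth functions the coefficients of the local linear relation could differ from one component of $\Omega$ to another (indeed the statement is false in the $C^\infty$ category), so one must use the analytic identity principle---rather than some gluing argument at the isolated zeros---to promote the relation on a single component $J$ to a global relation on $E$. Setting up the operator $L$ and invoking the dimension of its solution space is routine; guaranteeing that the relation is globally valid is the crux of the argument.
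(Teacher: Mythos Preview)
The paper does not supply its own proof of this statement: Theorem~\ref{thm2.5} is simply quoted from the reference \cite{taubesECT1} (Bostan and Dumas) and used as a black box throughout. There is therefore nothing in the paper to compare your argument against.

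That said, your proof is correct and is precisely the classical argument (and indeed the one presented in \cite{taubesECT1}): induct on the size of the family, use the sub-Wronskian to set up a linear ODE on the locus where it is nonzero, deduce a local linear relation for $f_m$, and then invoke the identity principle for analytic functions to propagate that relation from a single component to the whole connected domain. Your remark that analyticity is essential---because in the $C^\infty$ category the local relations need not patch---is exactly the point of the cited reference. One small caveat worth making explicit is that the domain must be connected (an interval in the real case, a connected open set in the complex case); otherwise the conclusion can fail trivially, and the identity-principle step has no force.
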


\begin{proposition}\label{prop7}
  For any fixed $m\in\mathbb Z^+$, the following order sets of functions are ECT-systems on $(0,\pi):$
  \begin{align}\label{eq21}
    \begin{split}
       &\left\{1,\cos\theta,\cdots,\cos m\theta
              \right\}, \\
       &\left\{\sin\theta,\sin2\theta,\cdots,\sin m\theta
              \right\}, \\
       &\left\{1,\cos\theta,\cdots,\cos m\theta,\sin m\theta,\sin(m-1)\theta,\cdots,\sin\theta
              \right\}.
    \end{split}
    \end{align}
  Moreover, the following order sets of functions are ECT-systems on both $(0,\frac{\pi}{2})$ and $(\frac{\pi}{2},\pi):$
  \begin{align}\label{eq22}
    \begin{split}
       &\left\{\cos k\theta,\cos(k+2)\theta,\cdots,\cos(k+2m)\theta
              \right\}, \\
       &\left\{\sin (k+2l)\theta,\sin(k+2l+2)\theta,\cdots,\sin(k+2m)\theta
              \right\}, \\
       &\left\{\cos k\theta,\cos(k+2)\theta,\cdots,\cos(k+2m)\theta,\right.\\
               &\ \left.\sin(k+2m)\theta,\sin(k+2m-2)\theta,\cdots,\sin(k+2l)\theta
              \right\},
    \end{split}
    \end{align}
    where $k\in\{0,1\}$ and $l=\frac{1+(-1)^k}{2}$.
\end{proposition}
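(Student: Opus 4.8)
The plan is to verify the Chebyshev property throughout by the Wronskian characterization of Lemma \ref{lem2.2}(ii), together with two elementary reductions that I will use repeatedly. First, if $\phi$ is a $C^\omega$ diffeomorphism of an interval onto another (equivalently, $\phi$ is monotone with nonvanishing derivative), the chain rule gives $W[f_0\circ\phi,\dots,f_k\circ\phi](\theta)=(\phi'(\theta))^{k(k+1)/2}W[f_0,\dots,f_k](\phi(\theta))$, so by Lemma \ref{lem2.2}(ii) the ECT property is invariant under such substitutions; and second, rescaling each element by a nonzero constant multiplies every Wronskian by a nonzero factor (column multilinearity) and hence also preserves the ECT property. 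With these in hand, the first family in \eqref{eq21} is precisely the classical ECT-system recalled in Section 1, and the second follows immediately: by Lemma \ref{lem2.3}, $\{1,\cos\theta,\dots,\cos m\theta\}$ is ECT iff $\{-\sin\theta,-2\sin2\theta,\dots,-m\sin m\theta\}$ is ECT, which after the constant rescaling above is the second family.

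Next I would reduce the half-interval families in \eqref{eq22} to those in \eqref{eq21}. For $k=0$ the map $\phi=2\theta$ is a $C^\omega$ diffeomorphism of $(0,\tfrac{\pi}{2})$ onto $(0,\pi)$ and of $(\tfrac{\pi}{2},\pi)$ onto $(\pi,2\pi)$, turning \eqref{eq22} into $\{1,\cos\phi,\dots,\cos m\phi\}$, $\{\sin\phi,\dots,\sin m\phi\}$ and their mixed counterpart; on $(0,\pi)$ these are exactly the families of \eqref{eq21}, while on $(\pi,2\pi)$ the further reflection $\phi\mapsto2\pi-\phi$ (again a $C^\omega$ diffeomorphism) brings them back to $(0,\pi)$ with the sign changes absorbed by rescaling. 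For $k=1$ I would use the identities $\cos(2r+1)\theta=\cos\theta\,V_r(\cos2\theta)$ and $\sin(2r+1)\theta=\sin\theta\,W_r(\cos2\theta)$, where $V_r,W_r$ are polynomials of degree exactly $r$. Since $\cos\theta$ (resp. $\sin\theta$) is nonvanishing on each of $(0,\tfrac{\pi}{2})$ and $(\tfrac{\pi}{2},\pi)$, Lemma \ref{lem2.4} lets me strip it off; then $u=\cos2\theta$, monotone on each half-interval, reduces the pure-cosine and pure-sine families of \eqref{eq22} to $\{V_0(u),\dots,V_m(u)\}$ and $\{W_0(u),\dots,W_m(u)\}$, which are triangular bases of the polynomials of degree $\le m$ and hence ECT on $(-1,1)$ exactly as $\{1,u,\dots,u^m\}$.

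What remains, and what I expect to be the main obstacle, is the genuinely mixed systems: the third family in \eqref{eq21} and the $k=1$ mixed family in \eqref{eq22}, which couple cosines and sines and resist all the factoring above (e.g. Lemma \ref{lem2.3} only transforms the first one into a ``sines then cosines'' system, no simpler). Here I would argue by Wronskians directly. Since each sine is appended at the end of an already-established ECT block, the ECT property of the whole set reduces, via Lemma \ref{lem2.2}(ii), to the nonvanishing on $(0,\pi)$ of each remaining leading Wronskian $W[\Phi_j]$, where $\Phi_j=\{1,\cos\theta,\dots,\cos m\theta,\sin m\theta,\dots,\sin(m-j+1)\theta\}$. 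To evaluate it I would pass to the exponential basis, writing $\cos k\theta,\sin k\theta\in\mathrm{span}\{e^{\pm ik\theta}\}$, so that $W[\Phi_j]$ becomes the determinant of a product of a generalized Vandermonde matrix $\big((ip)^r\big)$, a diagonal factor $\mathrm{diag}(e^{ip\theta})$, and the constant coefficient matrix. A Cauchy--Binet expansion then leaves only those frequency sets $S$ containing $0$, exactly one of each low pair $\{k,-k\}$ with $k\le m-j$, and both of each high pair, which renders $W[\Phi_j]$ an explicit alternating sum of exponentials $e^{i\sigma(S)\theta}$ with Vandermonde coefficients, all of whose exponents $\sigma(S)=\sum_{k\le m-j}\epsilon_k k$ share a fixed parity.

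The crux is to show this alternating sum is zero-free on the open interval. Because of the common parity, it factors as $\sin\theta$ (resp. a constant) times a polynomial in $\sin^2\theta$, and one must prove that this polynomial stays away from zero for the relevant range of $\sin^2\theta$; the small cases $m=1,2$ already show the coefficients land in the good region (for instance $W[\Phi_1]$ at $\theta=\tfrac{\pi}{2}$ is $-24\neq0$ when $m=2$), but controlling the Vandermonde coefficients in general is the hard part, quite separate from the earlier routine reductions. The same computation over $(0,\tfrac{\pi}{2})$ and $(\tfrac{\pi}{2},\pi)$ with the odd frequencies $\pm1,\pm3,\dots,\pm(2m+1)$ then settles the $k=1$ mixed family of \eqref{eq22}. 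Note that Theorem \ref{thm2.5} guarantees these Wronskians are not identically zero, a useful check but by itself insufficient for the pointwise nonvanishing that the ECT property demands.
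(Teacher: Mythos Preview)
Your reductions for the first two families in \eqref{eq21} and for the pure cosine/sine families in \eqref{eq22} are correct and essentially parallel to the paper's (the paper phrases the sine case via the span identity \eqref{eq37} and Lemma~\ref{lem2.4} rather than Lemma~\ref{lem2.3}, but the content is the same, and the $k=0,1$ reductions for \eqref{eq22} match). The gap is exactly where you say it is: the mixed family, the third set in \eqref{eq21}. Your exponential/Cauchy--Binet strategy is plausible in principle, but you leave the decisive step---that the resulting alternating sum of exponentials with Vandermonde weights is zero-free on $(0,\pi)$---unproved, and there is no obvious sign or monotonicity argument to finish it; as stated, the proof is incomplete.

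The paper avoids this difficulty entirely by a short recursive row reduction on the Wronskian. Write $\Phi_{m,i}=\{1,\cos\theta,\dots,\cos m\theta,\sin m\theta,\dots,\sin i\theta\}$. In the Wronskian matrix the columns for $\cos m\theta$ and $\sin m\theta$ satisfy $f''=-m^2 f$; adding $m^2$ times each row to the row two steps below it (starting from the third row) annihilates those two columns from row three onward, while it multiplies the column for $\cos j\theta$ or $\sin j\theta$ by $(j^2-m^2)$ in those rows. Expanding along the two cleared columns yields
\[
W[\Phi_{m,i}](\theta)=m^3\prod_{j=1}^{m-1}(m^2-j^2)\prod_{j=i}^{m-1}(m^2-j^2)\cdot W[\Phi_{m-1,i}](\theta),
\]
and iterating down to $W[1,\cos\theta,\dots,\cos(i-1)\theta]$, which is nonzero on $(0,\pi)$ by the first family. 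Thus all the leading Wronskians are nonvanishing and the ECT property follows directly from Lemma~\ref{lem2.2}(ii), with no need to analyze an explicit trigonometric polynomial. The same recursion, applied with frequencies $1,3,\dots,2m+1$, disposes of the $k=1$ mixed family in \eqref{eq22}. I would replace your Cauchy--Binet plan by this reduction.
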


We remark that the Chebyshev properties of some families in Proposition \ref{prop7}, such as $\{1,\cos\theta,\cdots,\cos m\theta\}$, have already been known and shown in the previous works, see e.g. \cite{taubes21.7,taubes18}.
The complete proof of the proposition is a little lengthy but follows directly from the above results and Chebyshev polynomials.
For sake of brevity we arrange it in Appendix A.1.

\subsection{A formula on the integration and determinant}
In this subsection we give a result to present some ``commutativity'' of integration and determinant, and then provide an integral expression for the Wronskian of family $\mathcal F$. These are the key tools to prove our main theorems.
\begin{lemma}\label{lem2.6}
  Let $V_0,V_1,\cdots,V_k$ be $k+1$ intervals in $\mathbb R$. Suppose that $\phi_{0,j},\phi_{1,j},\cdots,\phi_{k,j}$ are absolutely integrable functions on $V_{j}$, where $j=0,1,\cdots,k$. Then
  \begin{align*}
    \left|
  \begin{array}{ccccccc}
  \int_{V_0}\phi_{0,0}(t)dt             &\cdots   &\int_{V_k}\phi_{0,k}(t)dt\\
  \int_{V_0}\phi_{1,0}(t)dt            &\cdots   &\int_{V_k}\phi_{1,k}(t)dt   \\
  \vdots            &\ddots   &\vdots                 \\
 \int_{V_0}\phi_{k,0}(t)dt       &\cdots   & \int_{V_k}\phi_{k,k}(t)dt    \\
  \end{array}
  \right|
  =\int\cdots\int_{V_0\times\cdots\times V_k}
    \left|
  \begin{array}{ccccccc}
  \phi_{0,0}(t_0)             &\cdots   &\phi_{0,k}(t_k)\\
  \phi_{1,0}(t_0)            &\cdots   &\phi_{1,k}(t_k)   \\
  \vdots            &\ddots   &\vdots                 \\
 \phi_{k,0}(t_0)       &\cdots   & \phi_{k,k}(t_k)    \\
  \end{array}
  \right|dt_0\cdots dt_k.
  \end{align*}
\end{lemma}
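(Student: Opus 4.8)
The plan is to exploit the multilinearity of the determinant in its columns together with Fubini's theorem, by expanding both sides via the Leibniz formula over the symmetric group. Write $S_{k+1}$ for the group of permutations of $\{0,1,\dots,k\}$, and denote the left-hand matrix by $M=(M_{i,j})$ with $M_{i,j}=\int_{V_j}\phi_{i,j}(t)\,dt$. The Leibniz expansion gives $\det M=\sum_{\sigma\in S_{k+1}}\mathrm{sgn}(\sigma)\prod_{j=0}^{k}\int_{V_j}\phi_{\sigma(j),j}(t)\,dt$. The crucial observation is that the $j$-th factor in each product is an integral over $V_j$ alone, so relabelling the dummy variable in the $j$-th factor as $t_j$ should turn the product of one-dimensional integrals into a single integral over the product domain.

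First I would justify, for each fixed $\sigma$, the identity $\prod_{j=0}^k\int_{V_j}\phi_{\sigma(j),j}(t)\,dt=\int_{V_0\times\cdots\times V_k}\prod_{j=0}^k\phi_{\sigma(j),j}(t_j)\,dt_0\cdots dt_k$ by Fubini's theorem. The hypothesis that each $\phi_{i,j}$ is absolutely integrable on $V_j$ is exactly what is needed: since the integrand on the right is a product of functions of separate variables, its absolute value integrates to $\prod_{j=0}^k\int_{V_j}|\phi_{\sigma(j),j}(t_j)|\,dt_j<\infty$, so the product function is absolutely integrable on $V_0\times\cdots\times V_k$ and Fubini applies.

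Next I would substitute these equalities back into the Leibniz expansion and interchange the finite sum over $\sigma$ with the integral, which is legitimate by linearity of the integral for finitely many summands. This yields $\det M=\int_{V_0\times\cdots\times V_k}\big(\sum_{\sigma\in S_{k+1}}\mathrm{sgn}(\sigma)\prod_{j=0}^k\phi_{\sigma(j),j}(t_j)\big)\,dt_0\cdots dt_k$. Finally I would recognize the integrand in parentheses as precisely the Leibniz expansion of $\det(\phi_{i,j}(t_j);\ 0\le i,j\le k)$, the determinant appearing on the right-hand side, completing the proof.

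The computation is essentially bookkeeping; the only genuine analytic point, and hence the step I expect to require the most care, is the Fubini interchange in the second paragraph. One must verify \emph{absolute} (not merely conditional) integrability on the product domain before splitting the multiple integral into an iterated one, and it is here that the absolute-integrability hypothesis, rather than mere integrability, is indispensable.
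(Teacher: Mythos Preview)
Your proposal is correct and follows essentially the same route as the paper: Leibniz-expand the determinant, convert each product of one-dimensional integrals into a single integral over $V_0\times\cdots\times V_k$, swap the finite sum with the integral, and recognize the resulting integrand as the Leibniz expansion of the right-hand determinant. Your version is slightly more careful in that you explicitly invoke Fubini and point out where the absolute-integrability hypothesis is used, whereas the paper performs these steps as a direct calculation without naming Fubini.
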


\begin{proof}
  Let $S_{k+1}$ be the symmetric group formed by all permutations of $k+1$ elements $\{0,1,\ldots,k\}$, and let $\tau$ be the evaluation function on $S_{k+1}$ that evaluates a permutation to be the minimum number of transpositions for passing the permutation to the identiy. Then direct calculations show that
  \begin{align*}
 &    \left|
  \begin{array}{ccccccc}
  \int_{V_0}\phi_{0,0}(t)dt             &\cdots   &\int_{V_k}\phi_{0,k}(t)dt\\
  \int_{V_0}\phi_{1,0}(t)dt            &\cdots   &\int_{V_k}\phi_{1,k}(t)dt   \\
  \vdots            &\ddots   &\vdots                 \\
 \int_{V_0}\phi_{k,0}(t)dt       &\cdots   & \int_{V_k}\phi_{k,k}(t)dt    \\
  \end{array}
  \right|\\
  &\indent=\sum_{\rho\in S_{k+1}}(-1)^{\tau(\rho)}
           \prod_{j=0}^{k}\int_{V_j}\phi_{\rho(j),j}(t_j)dt_j\\
  &\indent=\int\cdots\int_{V_0\times\cdots\times V_k}
           \left(\sum_{\rho\in S_{k+1}}(-1)^{\tau(\rho)}
           \prod_{j=0}^{k}\phi_{\rho(j),j}(t_j)\right)
           dt_0\cdots dt_k\\
  &\indent=\int\cdots\int_{V_0\times\cdots\times V_k}
    \left|
  \begin{array}{ccccccc}
  \phi_{0,0}(t_0)             &\cdots   &\phi_{0,k}(t_k)\\
  \phi_{1,0}(t_0)            &\cdots   &\phi_{1,k}(t_k)   \\
  \vdots            &\ddots   &\vdots                 \\
 \phi_{k,0}(t_0)       &\cdots   & \phi_{k,k}(t_k)    \\
  \end{array}
  \right|dt_0\cdots dt_k.
  \end{align*}
The lemma follows.
\end{proof}

%Now let us consider the family $I_{i,j}$ defined in \eqref{eq2}. For $s\in\{0,\cdots,n\}$ and $(\bm k_{s+1})=(k_0,\cdots,k_s)\in %\{0,\cdots,m_0\}\times\cdots\times\{0,\cdots,m_s\}$, we set
%\begin{align}\label{eq12}
%\begin{split}
%  \omega_{\bm k_{s+1}}&(\bm t_{0,k_0+1},\cdots,\bm t_{s,k_s+1};y)\\
%  &:= W[G_0(t_{0,0},\cdot),\cdots,G_0(t_{0,k_0},\cdot),\cdots,G_s(t_{s,0},\cdot),\cdots,G_s(t_{s,k_s},\cdot)](y).
%\end{split}
%\end{align}

%Now we apply the lemma to give an integral expression for the Wronskian of the family $\mathcal F$.

\begin{proposition}\label{prop2}
Let $I_{i,j}$ be defined as in \eqref{eq2}. Then for each $s\in\{0,\cdots,n\}$ and $(k_0,\cdots,k_s)\in \{0,\cdots,m_0\}\times\cdots\times\{0,\cdots,m_s\}$,

\begin{align*}
 W&[\bm I_{0,k_0},\cdots,\bm{I}_{s,k_s}](y)\\
  &=\prod_{i=0}^{s}\frac{1}{(k_i+1)!}
  \int\cdots\int_{E_0^{k_0+1}\times\cdots\times E_s^{k_s+1}}\prod^{s}_{i=0}D[\bm f_{i,k_i};\bm t_{i,k_i}]\\
&\qquad \qquad \indent\cdot W[G_0(t_{0,0},\cdot),\cdots,G_0(t_{0,k_0},\cdot),\cdots,G_s(t_{s,0},\cdot),\cdots,G_s(t_{s,k_s},\cdot)](y)dV,
\end{align*}
where  $dV=dt_{0,0}\cdots dt_{0,k_0}\cdots dt_{s,0}\cdots dt_{s,k_s}$.
\end{proposition}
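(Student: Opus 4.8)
The plan is to express the continuous Wronskian on the left-hand side as a single $N\times N$ determinant, where $N=\sum_{i=0}^{s}(k_i+1)={\rm Card}(\bm I_{0,k_0},\cdots,\bm I_{s,k_s})$, then push the integration sign through the determinant by means of Lemma \ref{lem2.6}, and finally recover the discrete Wronskians of the $\bm f_{i,k_i}$ by an antisymmetrization argument. First I would invoke the differentiation-under-the-integral formula recorded after \eqref{eq2}, namely $I_{i,j}^{(l)}(y)=\int_{E_i}f_{i,j}(t)\,\partial^{l}_yG_i(t,y)\,dt$ for $l=0,\cdots,N-1$, which is legitimate by the stated absolute-integrability and uniform-convergence hypotheses. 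Encoding the family $\bm I_{0,k_0},\cdots,\bm I_{s,k_s}$ through a single column index that runs over the pairs $(i,j)$, the entry of the Wronskian matrix in row $r$ and column $(i,j)$ is precisely $\int_{E_{i}}f_{i,j}(t)\,\partial^{r}_yG_{i}(t,y)\,dt$. Applying Lemma \ref{lem2.6} with the interval attached to column $(i,j)$ taken to be $E_{i}$ and with $\phi_{r,(i,j)}(t)=f_{i,j}(t)\,\partial^{r}_yG_{i}(t,y)$ then gives
\begin{align*}
W[\bm I_{0,k_0},\cdots,\bm{I}_{s,k_s}](y)
=\int\cdots\int_{E_0^{k_0+1}\times\cdots\times E_s^{k_s+1}}
\det\!\big(f_{i,j}(t_{i,j})\,\partial^{r}_yG_{i}(t_{i,j},y)\big)\,dV,
\end{align*}
where $r$ indexes the rows and $(i,j)$ the columns of the integrand determinant, and a distinct dummy variable $t_{i,j}$ is assigned to each column.

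Next I would observe that in the integrand determinant the scalar $f_{i,j}(t_{i,j})$ is common to the entire column $(i,j)$ and therefore factors out by column-linearity, leaving $\prod_{i,j}f_{i,j}(t_{i,j})$ multiplied by $\det\big(\partial^{r}_yG_i(t_{i,j},y)\big)$. By the definition of the continuous Wronskian, this surviving determinant is exactly $W[G_0(t_{0,0},\cdot),\cdots,G_0(t_{0,k_0},\cdot),\cdots,G_s(t_{s,0},\cdot),\cdots,G_s(t_{s,k_s},\cdot)](y)$ in the variable $y$.

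The remaining and most delicate step is to convert the product $\prod_{i,j}f_{i,j}(t_{i,j})$ into the product of discrete Wronskians $\prod_{i=0}^{s}D[\bm f_{i,k_i};\bm t_{i,k_i}]$, together with the normalizing factor $\prod_{i=0}^{s}\frac1{(k_i+1)!}$. The key structural fact is that the $G$-Wronskian is \emph{antisymmetric} under any permutation of the column variables $t_{i,0},\cdots,t_{i,k_i}$ within a fixed block $i$, since such a permutation merely permutes columns built from the \emph{same} function $G_i$; meanwhile the region $E_i^{k_i+1}$ and Lebesgue measure are invariant under relabeling these dummy variables. Hence, for each tuple $\sigma=(\sigma_0,\cdots,\sigma_s)\in S_{k_0+1}\times\cdots\times S_{k_s+1}$, the substitution $t_{i,j}\mapsto t_{i,\sigma_i(j)}$ leaves the whole integral unchanged while multiplying the $G$-Wronskian by $\prod_i{\rm sgn}(\sigma_i)$. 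Averaging over all such $\sigma$ and using the factorization $\sum_{\sigma}(\prod_i{\rm sgn}(\sigma_i))\prod_{i,j}f_{i,j}(t_{i,\sigma_i(j)})=\prod_i(\sum_{\sigma_i\in S_{k_i+1}}{\rm sgn}(\sigma_i)\prod_j f_{i,j}(t_{i,\sigma_i(j)}))=\prod_{i=0}^{s}D[\bm f_{i,k_i};\bm t_{i,k_i}]$ produces the claimed constant and the stated formula. The care required here is purely bookkeeping: one must track the sign contributed by each block's permutation separately and check that the Leibniz expansion of $D[\bm f_{i,k_i};\bm t_{i,k_i}]$ matches the antisymmetrized product of the $f_{i,j}$, so that the group $S_{k_0+1}\times\cdots\times S_{k_s+1}$ factors block-by-block through the determinants.
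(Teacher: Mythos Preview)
Your proposal is correct and follows essentially the same route as the paper: apply Lemma~\ref{lem2.6} to the Wronskian matrix, factor $f_{i,j}(t_{i,j})$ out of each column, and then antisymmetrize over $S_{k_0+1}\times\cdots\times S_{k_s+1}$ using the invariance of the domain and the sign change of the $G$-Wronskian to produce $\prod_i D[\bm f_{i,k_i};\bm t_{i,k_i}]$ and the factorial constant. The paper phrases the last step as summing over all permutations (obtaining $\prod_i(k_i+1)!$ on the left) rather than averaging, but the argument is identical.
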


\begin{proof}
Set $M:=s+\sum_{i=0}^{s}k_i$ and $E^{\bm k_s}:=E_0^{k_0+1}\times\cdots\times E_s^{k_s+1}$. According to Lemma \ref{lem2.6}, we have
 \begin{align}\label{eq12}
 \begin{split}
 &W[\bm I_{0,k_0},\cdots,\bm{I}_{s,k_s}](y)\\
  &=
  \left|
  \begin{array}{ccccccc}
  &\cdots         &\int_{E_i}f_{i,0}(t)G_i(t,y)dt                    &\cdots   &\int_{E_i}f_{i,k_i}(t)G_i(t,y)dt                     &\cdots\\
  &\cdots         &\int_{E_i}f_{i,0}(t)\partial_{y}G_i(t,y)dt          &\cdots   &\int_{E_i}f_{i,k_i}(t)\partial_{y}G_i(t,y)dt           &\cdots\\
  &\cdots         &\vdots            &\ddots   &\vdots                                          &\cdots\\
  &\cdots         &\int_{E_i}f_{i,0}(t)\partial^{M}_{y}G_i(t,y)dt      &\cdots   &\int_{E_i}f_{i,k_i}(t)\partial^{M}_{y}G_i(t,y)dt         &\cdots\\
  \end{array}
  \right|\\
  &=
  \int\cdots\int_{E^{\bm k_s}}
  \left|
  \begin{array}{ccccccc}
  &\cdots         &f_{i,0}(t_{i,0})G_i(t_{i,0},y)                      &\cdots   &f_{i,k_i}(t_{i,k_i})G_i(t_{i,k_i},y)                     &\cdots\\
  &\cdots         &f_{i,0}(t_{i,0})\partial_{y}G_i(t_{i,0},y)          &\cdots   &f_{i,k_i}(t_{i,k_i})\partial_{y}G_i(t_{i,k_i},y)           &\cdots\\
  &\cdots         &\vdots               &\ddots   &\vdots                                          &\cdots\\
  &\cdots         &f_{i,0}(t_{i,0})\partial^{M}_{y}G_i(t_{i,0},y)      &\cdots   &f_{i,k_i}(t_{i,k_i})\partial^{M}_{y}G_i(t_{i,k_i},y)         &\cdots\\ \end{array}
  \right|dV\\
 &=
  \int\cdots\int_{E^{\bm k_s}}
  \prod_{i=0}^{s}\left(\prod_{j=0}^{k_i}f_{i,j}(t_{i,j})\right)\\
  &\qquad\qquad \indent\cdot  W[G_0(t_{0,0},\cdot),\cdots,G_0(t_{0,k_0},\cdot),\cdots,G_s(t_{s,0},\cdot),\cdots,G_s(t_{s,k_s},\cdot)](y)dV,
  \end{split}
\end{align}
where the subscript ``$i$'' in the first and second equalities represents the integers from $0$ to $s$.

Let $S_{k_i+1}$ be the symmetric group formed by all permutations of the $k_i+1$ elements $\{0,1,\ldots,k_i\}$, $i=0,\cdots,s$.
For $i\in\{0,1,\ldots,s\}$, since $t_{ij}\in E_i$, $j=0,1,\ldots,k_i$, in the integration \eqref{eq12} we can arbitrarily use the integrating variables $t_{i,\rho_i(0)}, \ldots, t_{i,\rho_i(k_i)}$, where $\rho_i\in S_{k_i+1}$, $i=0,1,\ldots,s$. Then we can write the equality \eqref{eq12} in
\begin{align}\label{eq13}
 \begin{split}
 &W[\bm I_{0,k_0},\cdots,\bm{I}_{s,k_s}](y)\\
 &=
  \int\cdots\int_{E^{\bm k_s}}
  \prod_{i=0}^{s}\left(\prod_{j=0}^{k_i}f_{i,j}(t_{i,\rho_{i}(j)})\right)\\
  &\indent\cdot  W[G_0(t_{0,\rho_{0}(0)},\cdot),\cdots,G_0(t_{0,\rho_{0}(k_0)},\cdot),\cdots,G_s(t_{s,\rho_{s}(0)},\cdot),\cdots,G_s(t_{s,\rho_{s}(k_s)},\cdot)](y)d\overline V\\
  &=
  \int\cdots\int_{E^{\bm k_s}}
  \prod_{i=0}^{s}(-1)^{\tau(\rho_i)}\left(\prod_{j=0}^{k_i} f_{i,j}(t_{i,\rho_{i}(j)})\right)\\
  &\indent\cdot  W[G_0(t_{0,0},\cdot),\cdots,G_0(t_{0,k_0},\cdot),\cdots,G_s(t_{s,0},\cdot),\cdots,G_s(t_{s,k_s},\cdot)](y)dV,
  \end{split}
\end{align}
where $d\overline V=dt_{0,\rho_{0}(0)}\cdots dt_{0,\rho_{0}(k_0)}\cdots dt_{s,\rho_{s}(0)}\cdots dt_{s,\rho_{s}(k_s)}$, and in the second equality we have used the properties of determinant on $W$.
As a result, we get from \eqref{eq13} that
\begin{align*}
\prod_{i=0}^{s}&(k_i+1)!\cdot W[\bm I_{0,k_0},\cdots,\bm{I}_{s,k_s}](y)\\
 &=\sum_{\substack{\rho_i\in S_{k_i+1},\\ i=0,\cdots,s}}
 \int\cdots\int_{E^{\bm k_s}}
  \prod_{i=0}^{s}(-1)^{\tau(\rho_i)}\left(\prod_{j=0}^{k_i}f_{i,j}(t_{i,\rho_{i}(j)})\right)\\
  &\indent\indent\indent\indent\
  \cdot  W[G_0(t_{0,0},\cdot),\cdots,G_0(t_{0,k_0},\cdot),\cdots,G_s(t_{s,0},\cdot),\cdots,G_s(t_{s,k_s},\cdot)](y)dV\\
  &=
 \int\cdots\int_{E^{\bm k_s}}
  \prod_{i=0}^{s}\left(\sum_{\rho_i\in S_{k_i+1}}(-1)^{\tau(\rho_i)}\prod_{j=0}^{k_i}
   f_{i,j}(t_{i,\rho_{i}(j)})\right)\\
  &\indent
  \cdot  W[G_0(t_{0,0},\cdot),\cdots,G_0(t_{0,k_0},\cdot),\cdots,G_s(t_{s,0},\cdot),\cdots,G_s(t_{s,k_s},\cdot)](y)dV\\
  &=
  \int\cdots\int_{E^{\bm k_s}}
  \prod_{i=0}^{s}D[\bm f_{i,k_i};\bm t_{i,k_i}]\\
  &\indent
  \cdot  W[G_0(t_{0,0},\cdot),\cdots,G_0(t_{0,k_0},\cdot),\cdots,G_s(t_{s,0},\cdot),\cdots,G_s(t_{s,k_s},\cdot)](y)dV.
\end{align*}
This proves the proposition.
\end{proof}

\subsection{First order Melnikov function for a kind of one dimensional non-autonomous differential equation}\label{an
appropriate label}
Assume that $H=H(t,x)$ is an analytic function defined in the $(t,x)$-plane, and it satisfies the next hypotheses:
\begin{itemize}
  \item [$\bullet$] $H$ is $2\pi$-periodic in the variable $t$.
  \item [$\bullet$] There exists an open interval $V\subseteq\mathbb R$, such that for each $h\in V$, $H(t,x)=h$ defines a convex periodic curve with respect to $t$, which contains the origin in its interior of the region limited by it.
  \item [$\bullet$] $H_x(t,x)\neq0$ in the region $R=\{(t,x):H(t,x)=h\in V\}$.
\end{itemize}
Assume that $L_i(t,x)$ for $i=0,1,\cdots,k-1$ are analytic functions defined in the region $R$ and are $2\pi$-periodic in $t$.
Let $L(t,x)$ be a function defined in $R$, which is $2\pi$-periodic in $t$ such that
\begin{align*}
L|_{R_i}=L_i|_{R_i},\indent R_i=\{(t,x)\in R: \sigma_i\leq t<\sigma_{i+1}\}, \quad i=0,1,\ldots,k-1,
\end{align*}
where $-\pi=\sigma_0<\sigma_1<\cdots<\sigma_k=\pi$.
%Here the definition of $L$ at the boundaries between the smooth regions follows the Filippov convention.
We remark that $L$ is smooth when $k=1$, and is piecewise smooth when $k\geq2$.

In this subsection we consider the perturbed differential equation
\begin{align}\label{eq14}
  \frac{dx}{dt}=-\frac{H_t(t,x)}{H_x(t,x)}+\varepsilon L(t,x)+o(\varepsilon).
\end{align}
%Denote by $x_{\varepsilon}(t,\rho)$ the solution of equation \eqref{eq14} with initial value $\rho$ at $t=0$. We say that $x_{\varepsilon}(t,\rho)$ is a periodic solution, if it is defined on $[0,2\pi]$ with $x_{\varepsilon}(2\pi,\rho)=\rho$. Furthermore, an orbit $x=x_{\varepsilon}(t,\rho)$ is called a periodic orbit (resp. limit cycle) of the equation, if $x_{\varepsilon}(t,\rho)$ is a periodic solution (resp. isolated periodic solution).
By the assumption, for $\varepsilon>0$ sufficiently small equation \eqref{eq14} has a unique solution $x_{\varepsilon}(t,\rho)$, which satisfies $x_{\varepsilon}(-\pi,\rho)=\rho\in V$ and is well-defined on $[-\pi,\pi]$ with $H(t,x_{\varepsilon}(t,\rho))\in V$.
Clearly, $H(t,x)$ is a first integral of equation \eqref{eq14}$|_{\varepsilon=0}$. So the region $R$ is a period annulus of the unperturbed equation. From the assumption it follows that all periodic orbits in $R$ transversally cross the lines $\Sigma_0:t=\sigma_0,$ $\Sigma_1:t=\sigma_1,$ $\cdots,$ $\Sigma_{k}:t=\sigma_{k}$. This implies that when $h\in V$, for $ \varepsilon>0 $ sufficiently small the solutions of equation \eqref{eq14} induces a series of Poincar\'{e} maps
\[
\Sigma_0\rightarrow\Sigma_{1}\rightarrow\Sigma_{2}\rightarrow\cdots\rightarrow\Sigma_{k},
\]
defined in a neighborhood of the periodic orbit $H(t,x)=h$.
%\begin{align*}
%  \mathcal P_{i,\varepsilon}:\Sigma_i\rightarrow\Sigma_{i+1}.
%\end{align*}

Recall that a solution $x=x_{\varepsilon}(t,\rho)$ of equation \eqref{eq14} is  $2\pi$-periodic if $x_{\varepsilon}(\pi,\rho)=\rho$. Moreover,
a $2\pi$-periodic solution is  a limit cycle if it is isolated in the set of $2\pi$-periodic solutions.
%Also we say that a limit cycle is nontrivial if it is not a horizontal straight line in $tx$-plane.
It is well known that a solution is $2\pi$-periodic (resp. limit cycle) if and only if its initial value  $\rho$ is the zero (resp. isolated zero) of the displacement function
\begin{align*}
\Delta(\rho,\varepsilon):=H(\pi, x_{\varepsilon}(\pi,\rho))-H(-\pi, \rho).
\end{align*}

Note that $ x_{\varepsilon}(t;\rho) $ is smooth with respect to $\varepsilon  $ and $\rho$, because of the smoothness of the composition of two smooth functions. Then $ \Delta(\rho,\varepsilon) $  can be expanded into a power series
\begin{align*}
\Delta(\rho,\varepsilon) = \sum_{i=1}^{+\infty} \varepsilon^i M_i(\rho), \ \text{where} \  M_i(\rho)= \frac{1}{i!}\partial_{\varepsilon}^{i}\left( \Delta(\rho,\varepsilon) \right)|_{\varepsilon=0}.
\end{align*}
For $i\in\mathbb N$, the function $ M_i(\rho) $ is called the $ i $-th  order Melnikov function of equation \eqref{eq14}. We remark that when $ M_1(\rho)=\cdots=M_{k-1}(\rho)\equiv0  $ and $ M_{k}(\rho) \not\equiv 0  $, the limit cycles of equation \eqref{eq14} bifurcating from the period solutions in $R$ are given by the isolated zeros of $ M_{k} $.

The approaches of getting the Melnikov functions of a piecewise smooth equation/system, which can be traced back to the Poincar\'{e}-Pontryagin-Melnikov theory, have been studied in many previous works, see for instance \cite{taubes29,taubes19,taubes Liu-Han,taubes Wei} and the references therein.
For the readers' convenience we present here the expression of the first order Melnikov function of equation \eqref{eq14}.

\begin{lemma}\label{lem2.8}
  The first order Melnikov function of equation \eqref{eq14} is given by
  \begin{align*}
  M_1(\rho)=\sum_{i=0}^{k-1}\int_{\sigma_i}^{\sigma_{i+1}}\big(H_x(t,x)L_i(t,x)\big)\big|_{x=x_0(t,\rho)}dt.
  \end{align*}
\end{lemma}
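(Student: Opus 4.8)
The plan is to compute the first-order Melnikov function $M_1(\rho)$ by differentiating the displacement function $\Delta(\rho,\varepsilon)$ with respect to $\varepsilon$ at $\varepsilon=0$. The displacement function is $\Delta(\rho,\varepsilon)=H(\pi,x_\varepsilon(\pi,\rho))-H(-\pi,\rho)$, and since $x_\varepsilon(t,\rho)$ is smooth in $\varepsilon$, I can write $M_1(\rho)=\partial_\varepsilon\Delta(\rho,\varepsilon)|_{\varepsilon=0}$. The key observation is that along an orbit of \eqref{eq14}, the quantity $H(t,x_\varepsilon(t,\rho))$ evolves in $t$, and its $t$-derivative is controlled by the perturbation. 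I would therefore track how $H(t,x_\varepsilon(t,\rho))$ changes from $t=-\pi$ to $t=\pi$ by integrating its total $t$-derivative.

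First I would compute $\frac{d}{dt}H(t,x_\varepsilon(t,\rho))=H_t(t,x_\varepsilon)+H_x(t,x_\varepsilon)\cdot\frac{dx_\varepsilon}{dt}$. Substituting the right-hand side of \eqref{eq14}, namely $\frac{dx_\varepsilon}{dt}=-\frac{H_t}{H_x}+\varepsilon L(t,x_\varepsilon)+o(\varepsilon)$, the unperturbed terms $H_t$ and $H_x\cdot(-H_t/H_x)=-H_t$ cancel, leaving
\begin{align*}
\frac{d}{dt}H(t,x_\varepsilon(t,\rho))=\varepsilon\, H_x(t,x_\varepsilon)L(t,x_\varepsilon)+o(\varepsilon).
\end{align*}
Integrating in $t$ over $[-\pi,\pi]$ and using $x_\varepsilon(-\pi,\rho)=\rho$ gives
\begin{align*}
\Delta(\rho,\varepsilon)=H(\pi,x_\varepsilon(\pi,\rho))-H(-\pi,\rho)=\int_{-\pi}^{\pi}\varepsilon\, H_x(t,x_\varepsilon)L(t,x_\varepsilon)\,dt+o(\varepsilon).
\end{align*}
Dividing by $\varepsilon$ and letting $\varepsilon\to0$, the integrand converges to its value along the unperturbed orbit $x=x_0(t,\rho)$, so $M_1(\rho)=\int_{-\pi}^{\pi}\big(H_x(t,x)L(t,x)\big)\big|_{x=x_0(t,\rho)}\,dt$.

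Finally I would invoke the piecewise definition of $L$: since $L|_{R_i}=L_i|_{R_i}$ with $R_i$ corresponding to $\sigma_i\le t<\sigma_{i+1}$ and $-\pi=\sigma_0<\cdots<\sigma_k=\pi$, I split the single integral into the sum $\sum_{i=0}^{k-1}\int_{\sigma_i}^{\sigma_{i+1}}\big(H_x L_i\big)|_{x=x_0(t,\rho)}\,dt$, which is exactly the claimed formula. \textbf{The main obstacle} I anticipate is making rigorous the interchange of the limit $\varepsilon\to0$ with the integral and justifying the expansion at the jump points $\sigma_i$: at each separation line $\Sigma_i$ the orbit crosses transversally (guaranteed by the convexity and $H_x\neq0$ hypotheses), so the Poincar\'e maps $\Sigma_i\to\Sigma_{i+1}$ compose smoothly and the contributions from the switching boundaries do not produce extra boundary terms in the first-order expansion. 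Controlling this, together with the uniform smoothness of $x_\varepsilon(t,\rho)$ in $\varepsilon$ that legitimizes the dominated-convergence passage to the limit, is where the care is needed; the algebraic cancellation of the unperturbed terms is the routine part.
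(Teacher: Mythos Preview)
Your proposal is correct and follows essentially the same route as the paper: both compute $\frac{d}{dt}H(t,x_\varepsilon(t,\rho))$, observe that the unperturbed terms cancel to leave $\varepsilon H_x L + o(\varepsilon)$, integrate over $[-\pi,\pi]$ split into the subintervals $[\sigma_i,\sigma_{i+1}]$, and then evaluate the $\varepsilon$-derivative at $\varepsilon=0$. The paper presents this more tersely (writing $\Delta=\sum_i\int dH$ and differentiating under the integral) and does not dwell on the justification of the limit interchange or the boundary behavior at the switching lines, but the underlying computation is identical to yours.
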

\begin{proof}
  By assumption and a direct calculation,
  \begin{align*}
    M_1(\rho)
    &=
    \partial_{\varepsilon}\Delta(\rho,\varepsilon)|_{\varepsilon=0}\\
    &=
    \left.\partial_{\varepsilon}
    \left(\sum^{k-1}_{i=0}\int^{\sigma_{i+1}}_{\sigma_i}dH(t, x_{\varepsilon}(t,\rho))\right)\right|_{\varepsilon=0}\\
    &=
    \sum^{k-1}_{i=0}\left.\partial_{\varepsilon}
    \left(\int^{\sigma_{i+1}}_{\sigma_i}\big(\varepsilon H_x(t, x_{\varepsilon}(t,\rho))L(t, x_{\varepsilon}(t,\rho))\big)dt\right)\right|_{\varepsilon=0}\\
    &=
    \sum^{k-1}_{i=0}
    \left(\int^{\sigma_{i+1}}_{\sigma_i} \big(H_x(t, x_{0}(t,\rho))L_i(t, x_{0}(t,\rho))\big)dt\right)\\
    &=
    \sum_{i=0}^{k-1}\int_{\sigma_i}^{\sigma_{i+1}}\big(H_x(t,x)L_i(t,x)\big)\big|_{x=x_0(t,\rho)}dt.
  \end{align*}
The assertion is verified.
\end{proof}
%(When equation \eqref{eq14} is smooth (i.e., $k=1$), the integral in the above lemma is generally defined through an Abelian integral. So we can say that this is the piecewise version of it when $k>1$.)

\section{Proof of the main results}\label{an appropriate label}

\begin{proof}[Proof of Theorem \ref{thm1}]
  For each $s\in\{0,\cdots,n\}$ and $k_s\in \{0,\cdots,m_s\}$, we get from Hypothesis (H) and Proposition \ref{prop2} that
  \begin{align*}
 W[\bm I_{0,m_0},\cdots,\bm{I}_{s-1,m_{s-1}},\bm{I}_{s,k_s}](y)
  \neq0 \text{ for $y\in U$}.
\end{align*}
This together with Lemma \ref{lem2.2} implies that the order set of functions, $\mathcal F$, is an ECT-system.
\end{proof}

As a consequence of Theorem \ref{thm1}, one reobtains the Chebyshev property of family \eqref{eq18}, which has been verified by Gasull et al in  \cite[Theorem A] {taubes2}. Here our calculation is very easy, as you can check.
Denote by $\mathcal V(\bm t_{k})$ the $(k+1)$-th order Vandermonde determinant for $\bm t_{k}\in\mathbb R^{k+1}$, i.e.,
\begin{align*}
  \mathcal V(\bm t_{k})
  =
  D[1,t,\cdots,t^{k};t_0,t_1,\cdots,t_{k}].
  \end{align*}
Recall that family \eqref{eq18} is the family $\mathcal F$ particularized with $n=0$, $G_0=(1-yg(t))^{-\alpha}$ and $f_{0,j}=g^{j}$. Then for each $k_0=0,1,\cdots,m_0$, we have by a direct calculation that
\begin{align*}
  D[f_{0,0},\cdots,f_{0,k_0};t_{0,0},\cdots,t_{0,k_0}]
  &=
  \mathcal V[g(t_{0,0}),\cdots,g(t_{0,k_0})]\\
  &=
  \prod_{0\leq j<i\leq k_0}\left(g(t_{0,i})-g(t_{0,j})\right),
\end{align*}
and
\begin{align*}
  W&[G_0(t_{0,0},\cdot),\cdots,G_0(t_{0,k_0},\cdot)](y)\\
  &=
  \frac{\prod_{i=0}^{k_0-1}(\alpha+i)^{k_0-i}}{\prod^{k_0}_{i=0}(1-yg(t_{0,i}))^{\alpha}}\cdot
  \mathcal V\left[\frac{g(t_{0,0})}{1-yg(t_{0,0})},\cdots,\frac{g(t_{0,k_0})}{1-yg(t_{0,k_0})}\right]\\
  &=
  \frac{\prod_{i=0}^{k_0-1}(\alpha+i)^{k_0-i}}{\prod^{k_0}_{i=0}(1-yg(t_{0,i}))^{\alpha+k_0}}\cdot
  \prod_{0\leq j<i\leq k_0}\left(g(t_{0,i})-g(t_{0,j})\right).
\end{align*}
%Observe that $(1-yg(t))|_{t\in E}>0$.
%$g(t_{0,i})-g(t_{0,j})\geq0\ (\text{resp. }\leq0)$ if, and only if,
%$$\frac{g(t_{0,i})}{1-yg(t_{0,i})}-\frac{g(t_{0,j})}{1-yg(t_{0,j})}
%=\frac{g(t_{0,i})-g(t_{0,j})}{(1-yg(t_{0,i}))(1-yg(t_{0,j}))}
%.$$
Hence, when
$\alpha\in\big(\mathbb R\backslash\mathbb Z^-_0\big)\bigcup\big(\mathbb Z^-_0\cap(-\infty,-m]\big)$,
\[
D[f_{0,0},\cdots,f_{0,k_0};t_{0,0},\cdots,t_{0,k_0}]
\cdot W[G_0(t_{0,0},\cdot),\cdots,G_0(t_{0,k_0},\cdot)](y)\geq0,
\]
where $y$ is contained in any given connected component $J$ of the set $\{y\in\mathbb R: (1-yg(t))|_{t\in E}>0\}$, and the equality only happens when $t_{0,i}=t_{0,j}$ for some $i\neq j$.  According to Theorem \ref{thm1},
family \eqref{eq18} is an ECT-system on $J$. We supplement that when $\alpha\in\mathbb Z^-_0\cap(-m,0]$ the family is not an ECT-system because all the integrals are polynomials of degree no more than $m-1$.

We now turn to the proofs of the other theorems.

\begin{proof}[Proof of Theorem \ref{thm2}]
  We only need to show that hypotheses (H.1) and (H.2) imply
  hypothesis (H) of Theorem \ref{thm1}. To this aim set $s\in\{0,\cdots,n\}$ and $(k_0,\cdots,k_s)\in \{0,\cdots,m_0\}\times\cdots\times\{0,\cdots,m_s\}$. As before, let
   $S_{k_i+1}$ be the symmetric group formed by all permutations of the $k_i+1$ elements $\{0,1,\ldots,k_i\}$, $i=0,\cdots,s$.
  Denote by
  \begin{itemize}
  \item[$\bullet$] $id_i$ the identity element of $S_{k_i+1}$, i.e., $id_i(j)=j$, $j=0,1,\cdots,k_i$.
  \item[$\bullet$] $E_{i,\rho}=\{\bm t_{i,k_i}\in E_{i}^{k_i+1}:t_{i,\rho(0)}<t_{i,\rho(1)}<\cdots<t_{i,\rho(k_i)}\}$, $\rho\in S_{k_i+1}$.
  \end{itemize}
 For convenience set
\begin{align*}
\omega&(\bm t_{0,k_0},\cdots,\bm t_{s,k_s},y)\\
&:=
 \prod_{i=0}^{s}D[\bm f_{i,k_i};\bm t_{i,k_i}]
  \cdot
 W[G_0(t_{0,0},\cdot),\cdots,G_0(t_{0,k_0},\cdot),\cdots,G_s(t_{s,0},\cdot),\cdots,G_s(t_{s,k_s},\cdot)](y).
 \end{align*}

According to hypothesis (H.1) and Lemma \ref{lem2.2},
  \begin{align*}
    D[\bm f_{i,k_i};\bm t_{i,k_i}]\neq0 \text{ for } \bm t_{i,k_i}\in E_{i,id_i} \text{ and } i=0,\cdots,s.
  \end{align*}
Moreover, since $E_0,\cdots,E_s$ are non-intersecting intervals, we can suppose without loss of generality that $t_0<t_1<\cdots<t_s$ for any $t_i\in E_i$, $i\in\{0,1,\ldots,s\}$. That is to say, for $(\bm t_{0,k_0},\cdots,\bm t_{s,k_s})\in E_{0,id_0}\times\cdots\times E_{s,id_s}$
   we have that
  \begin{align*}
    t_{0,0}<t_{0,1}<\cdots<t_{0,k_0}<t_{1,0}<t_{1,1}<\cdots<t_{1,k_1}<\cdots<t_{s,0}<t_{s,1}<\cdots<t_{s,k_s}.
  \end{align*}
  Thus, hypothesis (H.2) and Lemma \ref{lem2.2} ensure that
  \begin{align*}
  W&[G_0(t_{0,0},\cdot),\cdots,G_0(t_{0,k_0},\cdot),\cdots,G_s(t_{s,0},\cdot),\cdots,G_s(t_{s,k_s},\cdot)](y)\\
  &=
  D[G(\cdot,y),\partial_{y}G(\cdot,y),\cdots,\partial^{K}_{y}G(\cdot,y);\bm t_{0,k_0},\bm t_{1,k_1},\cdots,\bm t_{s,k_s}]
  \neq0,
  \end{align*}
   where $K=s+\sum_{i=0}^{s}k_s$ , $(\bm t_{0,k_0},\cdots,\bm t_{s,k_s})\in E_{0,id_0}\times\cdots\times E_{s,id_s}$ and $y\in U$.
  As a result,
  \begin{align}\label{eq20}
  \begin{split}
  \omega&(\bm t_{0,k_0},\cdots,\bm t_{s,k_s},y)\neq0 \text{ on }  E_{0,id_0}\times\cdots\times E_{s,id_s}\times U.
  \end{split}
  \end{align}

Now for $\rho_i\in S_{k_i+1}$, $i=0,\cdots,s$, using the similar tricks as above one gets that
  %we define $\psi_{\rho_i}:E_{i}^{k_i+1}\rightarrow E_{i}^{k_i+1}$ as
  %$$\psi_{\rho_i}(t_{i,0},t_{i,1},\cdots,t_{i,k_i})=(t_{i,\rho_i(0)},t_{i,\rho_i(1)},\cdots,t_{i,\rho_i(k_i)}).$$
  %Then,
  \begin{align*}
    %&D[\bm f_{i,k_i};\bm t_{i,k_i}]\big|_{\bm t_{i,k_i}\in E_{i,\rho_i}}
    %=
    %\text{sgn}(\rho)\cdot
    %D[\bm f_{i,k_i};\bm \psi_{\rho_i}(\bm t_{i,k_i})]\big|_{\bm \psi_{\rho_i}(\bm t_{i,k_i})\in E_{i,id_i}},\\
D&[\bm f_{i,k_i};\bm t_{i,k_i}]\\
    &=
    (-1)^{\tau(\rho_i)}\cdot
    D[f_{i,0},\cdots,f_{i,k_i};t_{i,\rho_i(0)},\cdots,t_{i,\rho_i(k_i)}],\\
W&[G_0(t_{0,0},\cdot),\cdots,G_0(t_{0,k_0},\cdot),\cdots,G_s(t_{s,0},\cdot),\cdots,G_s(t_{s,k_s},\cdot)](y)\\
  &=
  \prod_{i=0}^{s}(-1)^{\tau(\rho_i)}\\
  &\indent \cdot W[G_0(t_{0,\rho_0(0)},\cdot),\cdots,G_0(t_{0,\rho_0(k_0)},\cdot),\cdots,G_s(t_{s,\rho_s(0)},\cdot),\cdots,G_s(t_{s,\rho_s(k_s)},\cdot)](y).
  \end{align*}
These induce
  \begin{align*}
  \omega&(t_{0,0},\cdots,t_{0,k_0},\cdots,t_{s,0},\cdots,t_{s,k_s},y)\\
  &=
  \omega(t_{0,\rho_0(0)},\cdots,t_{0,\rho_0(k_0)},\cdots,t_{s,\rho_s(0)},\cdots,t_{s,\rho_s(k_s)},y).
  \end{align*}
Taking  \eqref{eq20} and the arbitrariness of $\rho_i$ into account, it follows that $\omega$ keeps its sign on
  $\big(\bigcup_{\rho\in S_{k_0+1}}E_{0,\rho}\big)\times\cdots\times \big(\bigcup_{\rho\in S_{k_s+1}}E_{s,\rho}\big)\times U$.
  Moreover,  for each $i\in\{0,1,\cdots,s\}$,
  $E_{i}^{k_i+1}\backslash\big(\bigcup_{\rho\in S_{k_i+1}}E_{i,\rho}\big)$
  %\begin{align*}
  %  E_{i}^{k_i+1}\backslash\left(\bigcup_{\rho\in S_{k_i+1}}E_{i,\rho}\right)
  %\end{align*}
  is a zero Lebesgue measure subset of $E_{i}^{k_i+1}$. Hence by continuity, for each $y\in U$, $\omega(\cdot,y)$ does not vanish identically and does not change its sign on $E_{0}^{k_0+1}\times\cdots\times E_{s}^{k_s+1}$.
  The hypothesis (H) of Theorem \ref{thm1} is verified and so is our assertion.
\end{proof}

\begin{proof}[Proof of Theorem \ref{thm4}]
By the assumption of the theorem and Theorem \ref{thm2}, the assertion of the theorem is true once the family $\mathcal F$ satisfies the hypothesis (H.2) of Theorem \ref{thm2}.

Let $G$ be defined as in \eqref{eq3}. For each $K\in\{0,1,\cdots,{\rm Card(\mathcal F)}-1\}$ and $\bm t_K\in E^{K+1}$, we have
\begin{align*}
D&[G(\cdot,y),\partial_{y}G(\cdot,y),\cdots,\partial^{K}_{y}G(\cdot,y);\bm t_K]\\
  %&=
  %W[G(t_{0,0},\cdot),\cdots,G(t_{0,k_0},\cdot),\cdots,G(t_{s,0},\cdot),\cdots,G(t_{s,k_s},\cdot)](y)\\
  &=
  \frac{\prod^{K-1}_{i=0}(\alpha+i)^{K-i}}{\prod^{K}_{i=0}(1-yg(t_{i}))^{\alpha}}\cdot
  \mathcal V\left[\frac{g(t_{0})}{1-yg(t_{0})},\cdots,\frac{g(t_{K})}{1-yg(t_{K})}\right]\\
  &=
  \frac{\prod^{K-1}_{i=0}(\alpha+i)^{K-i}}{\prod^{K}_{i=0}(1-yg(t_{i}))^{\alpha+K}}\cdot
  \prod_{0\leq j<i\leq K}\left(g(t_{i})-g(t_{j})\right),
 % &=
 % \frac{\prod^{K-1}_{i=0}(\alpha+i)^{K-i}}{\prod^{K}_{i=0}(1-yg(t_{i}))^{\alpha}}\cdot
 % \prod_{0\leq j<i\leq K}\frac{g(t_{i})-g(t_{j})}{(1-yg(t_{i}))(1-yg(t_{j}))},
\end{align*}
where the notation ``$\mathcal V$'' represents the Vandermonde determinant, as we have used previously.
Since $g$ is monotonic on $E$ and $\alpha\in\big(\mathbb R\backslash\mathbb Z^-_0\big)\bigcup\big(\mathbb Z^-_0\cap(-\infty,1-{\rm Card(\mathcal F)}]\big)$, we know that $g(t_{i})-g(t_{j})\neq0$ when $t_i\neq t_j$, and that $\alpha+i\neq0$ when $i\in\{0,1,\cdots,K-1\}$. Hence,
$$D[G(\cdot,y),\partial_{y}G(\cdot,y),\cdots,\partial^{K}_{y}G(\cdot,y);\bm t_K]\neq0$$
for all $\bm t_K\in E^{K+1}$ satisfying $t_i\neq t_j$ for $i\neq j$. Applying Lemma \ref{lem2.2}, the hypothesis (H.2) of Theorem \ref{thm2} holds for $\mathcal{F}$.

As a result, our assertion is verified by using  Theorem \ref{thm2}.
\end{proof}

\section{Proof of Proposition \ref{prop1}}\label{an appropriate label}

This section is to study the families in Proposition \ref{prop1}. Once we complete the proof of Proposition \ref{prop1}, we will obtain some new families of Chebyshev systems. Now we prove this proposition.

%\begin{proof}[Proof of Proposition \ref{prop1}]
(i) For the sake of shortness we only prove the assertion for the first order set in \eqref{eq5}, because the others of statement (i) follow in exactly the same way from the arguments used in the proof of the first order set.

  Take $g(\theta)=\cos\nu\theta$.
  It is easy to see that the first order set in \eqref{eq5} is the family $\mathcal F$ with
  \[
  \begin{array}{l}
 f_{i,j}=\xi_{E_i}(\theta)\sin(j+1)\theta,\\
 G_i=\left.\frac{1}{(1-yg(\theta))^{\alpha}}\right|_{(\theta,y)\in E_i\times(-1,1)},
 \end{array} \quad
 i=0,\cdots,n,\ j=0,\cdots,m_i-1.
  \]
By the assumption of the proposition we know that $\xi_{E_i}\neq0$ and $g$ is monotonic on $(0,\frac{\pi}{\nu})\subseteq(0,\pi)$. Hence according to Proposition \ref{prop7}, Lemma \ref{lem2.4} and Theorem \ref{thm4}, the first order set in \eqref{eq5} is an ECT-system on $(-1,1)$.

(ii) Set
\begin{align*}
\mathcal A_{k,a,\alpha}
:=
\left\{(y+a)^{\alpha},y(y+a)^{\alpha},\cdots,y^{k}(y+a)^{\alpha}\right\},\indent k\in\mathbb Z_0^+,\ a,\alpha\in\mathbb R.
\end{align*}
From Lemma \ref{lem2.3}, some direct calculations show that the assertion of this statement is equivalent to that the order set of functions
\begin{align}\label{eq23}
\bigcup^{n}_{i=1}
\mathcal A_{m_i,a_i,\beta-m_0-1}
%\left\{(y+a_i)^{\beta-k_0-1},y(y+a_i)^{\beta-k_0-1},\cdots,y^{k_i}(y+a_i)^{\beta-k_0-1}\right\}
\end{align}
is an ECT-system on $(-a_n,+\infty)$. Next we only need to consider the case $a_n\geq0$, for otherwise we can take a translation of the variable $y$.

Now let $a_0\in(a_1,+\infty]$ to be determined. We consider a kind of family $\mathcal F$ where
%\begin{align*}
%  111
%\end{align*}
%and
\[
\begin{array}{ll}
 U=(-a_n,+\infty),\ E_i=(a_{i+1},a_{i}), & \ i=0,\cdots,n-1,\\
 f_{i,j}=t^{j-\alpha},\ \ \ \qquad  \alpha=2+m_0-\beta, &\  j=0,\cdots,m_{i+1}  \\
  G_i=\left.\frac{1}{(1-y(-t)^{-1})^{\alpha}}\right|_{(t,y)\in E_i\times U}=\left.\frac{t^{\alpha}}{(t+y)^{\alpha}}\right|_{(t,y)\in E_i\times U}, & \ i=0,\cdots,n-1.
\end{array}
\]
By the assumption of the statement, one has $\alpha>1+m_1\geq\cdots\geq1+m_n$. Thus for $j\in\{0,1,\cdots,m_{i+1}\}$,
%\begin{align*}
%  I_{i,j}(y)=\int_{E_i}\frac{t^j}{(t+y)^{\alpha}}dt=\int_{a_{i+1}}^{a_i}\frac{t^j}{(t+y)^{\alpha}}dt.
%\end{align*}
\begin{align*}
I_{i,j}(y)
&=
\int_{a_{i+1}}^{a_i}\frac{t^j}{(t+y)^{\alpha}}dt=
\sum^{j}_{l=0}\binom{j}{l}(-y)^{j-l}\int^{a_{i}}_{a_{i+1}}\frac{1}{(t+y)^{\alpha-l}}dt\\
%&=
%\sum^{j}_{l=0}\binom{j}{l}\frac{(-y)^{j-l}}{1+l-\alpha}
%\left((y+a_{i})^{1+l-\alpha}-(y+a_{i+1})^{1+l-\alpha}\right)\\
&=
\left(\sum^{j}_{l=0}\binom{j}{l}\frac{(-y)^{j-l}(y+a_{i})^{l}}{1+l-\alpha}\right)(y+a_{i})^{1-\alpha}\\
&\indent
\qquad \quad -\left(\sum^{j}_{l=0}\binom{j}{l}\frac{(-y)^{j-l}(y+a_{i+1})^{l}}{1+l-\alpha}\right)(y+a_{i+1})^{1-\alpha}\\
&\in
\left\langle\mathcal A_{j,a_i,1-\alpha}\cup\mathcal A_{j,a_{i+1},1-\alpha}\right\rangle,
\end{align*}
where $\langle \cdot \rangle$ denotes the linear span by its elements.
Note that
\begin{align*}
\lim_{a_0\rightarrow+\infty}\left(\sum^{j}_{l=0}\binom{j}{l}\frac{(-y)^{j-l}(y+a_0)^{l}}{1+l-\alpha}\right)(y+a_0)^{1-\alpha}
=0.
\end{align*}
Hence, taking $a_0=+\infty$, we have that $I_{0,j}\in\langle\mathcal A_{j,a_1,1-\alpha}\rangle$. Consequently, for each $s\in\{1,\cdots,n\}$ and $(j_1,\cdots,j_s)\in\{0,\cdots,m_1\}\times\cdots\times\{0,\cdots,m_s\}$ with $j_1\geq j_2\geq\cdots\geq j_s$,
\begin{align*}
&\left\langle\bigcup^{s-1}_{i=0}\{I_{i,0}(y),I_{i,1}(y),\cdots,I_{i,j_{i+1}}(y)\}\right\rangle
\subseteq
\left\langle\bigcup^{s}_{i=1}\mathcal A_{j_i,a_i,1-\alpha}\right\rangle.
\end{align*}

On the other hand, it follows from Theorem \ref{thm4} that the family $\mathcal F$ here is an ECT-system on $(-a_n,+\infty)$. Thus,
\begin{align*}
  s+\sum_{i=1}^{s}j_i
  &=\dim\left(\left\langle\bigcup^{s-1}_{i=0}\{I_{i,0}(y),I_{i,1}(y),\cdots,I_{i,j_{i+1}}(y)\}\right\rangle\right)\\
&\leq
\dim\left(\left\langle\bigcup^{s}_{i=1}\mathcal A_{j_i,a_i,1-\alpha}\right\rangle\right)
\leq s+\sum_{i=1}^{s}j_i.
\end{align*}
This means that for each $s\in\{1,\cdots,n\}$ and $(j_1,\cdots,j_s)\in\{0,\cdots,m_1\}\times\cdots\times\{0,\cdots,m_s\}$ with $j_1\geq j_2\geq\cdots\geq j_s$, we actually have
\begin{align*}
&\left\langle\bigcup^{s}_{i=1}\mathcal A_{j_i,a_i,1-\alpha}\right\rangle
=
\left\langle\bigcup^{s-1}_{i=0}\{I_{i,0}(y),I_{i,1}(y),\cdots,I_{i,j_{i+1}}(y)\}\right\rangle.
\end{align*}
Consequently, the maximum number of isolated zeros of the non-trivial linear combinations of the elements in  $\bigcup^{s}_{i=1}\mathcal A_{j_i,a_i,1-\alpha}$, is equal to the one in  $\bigcup^{s-1}_{i=0}\{I_{i,0}(y),I_{i,1}(y),\cdots,$ $I_{i,j_{i+1}}(y)\}$.
By definition and the Chebyshev property of $\mathcal F$, the order set $\bigcup^{n}_{i=1}\mathcal A_{m_i,a_i,1-\alpha}$ (i.e., $\bigcup^{n}_{i=1}
\mathcal A_{m_i,a_i,\beta-m_0-1}$) is an ECT-system on $(-a_n,+\infty)$.

It completes the proof of  statement (ii) and so of the proposition.
\qed
%\end{proof}

\begin{remark}\label{rem1}
{\rm According to  Proposition \ref{prop1}, we have the next comments.
\begin{itemize}
\item  We do not consider the case when $\alpha\in\mathbb Z^-_0$ in statement (i) of Proposition \ref{prop1} because $\mathcal C^E_k$ and $\mathcal S^E_k$ are reduced to polynomials in this case.

\item We give two examples to show the necessity of the condition in statement (ii) of Proposition \ref{prop1}. For the order set of functions
\begin{align*}
&\left\{(y+4)^{\frac{3}{2}},y(y+4)^{\frac{3}{2}},y^2(y+4)^{\frac{3}{2}},y^3(y+4)^{\frac{3}{2}},(y+1)^{\frac{3}{2}}\right\},\\
&\left\{(y+5)^{\frac{5}{2}},y(y+5)^{\frac{5}{2}},y^2(y+5)^{\frac{5}{2}},y^3(y+5)^{\frac{5}{2}},(y+1)^{\frac{5}{2}}\right\},
\end{align*}
one can check that the first (resp. second) family is of the form $\mathcal F$ with $n=2$, $a_1=4,\ a_2=1$, $m_0=-1,\ m_1=3,\ m_2=0$ and $\beta=\frac{3}{2}$ (resp. $n=2$, $a_1=5,\ a_2=1$, $m_0=-1,\ m_1=3,\ m_2=0$ and $\beta=\frac{5}{2}$). Thus in both cases we have $\beta>m_0-m_1+1$, which do not satisfy the condition $\beta\in(\mathbb R\backslash\mathbb Z^+_0)\cap(-\infty,m_0-m_1+1)$ of  Proposition \ref{prop1}.
On the other hand,
it is easy to verify that both of the corresponding fifth order Wronskians of these two families have indefinite signs on $(-1,+\infty)$. Therefore the families are not ECT-systems.

\item There is another example $\{1,y,\sqrt{y+1},y\sqrt{y+1},\sqrt{y},y\sqrt{y},y^2\sqrt{y}\}$, which contradicts with the conditions $a_1>a_2$ and $m_1\geq m_2$ in statement (ii) of Proposition \ref{prop1}.
 In fact, it was proved in \cite{taubes18} that this family is even not an ET-system.
\end{itemize}
}
\end{remark}

\section{Applications to planar differential systems}\label{an appropriate label}

In this section we apply our main results to study limit cycle bifurcation of systems \eqref{eq9}, \eqref{eq11} and \eqref{eq10}, respectively.

\subsection{Smooth planar differential systems}\label{an
appropriate label}
First we point out that system \eqref{eq9}$|_{\varepsilon=0}$ has an isochronous center at the origin, and has the first integral $(x^2+y^2)^{\frac{1}{2}-m}+y(x^2+y^2)^{-\frac{1}{2}}$.

Writing system \eqref{eq9} in polar coordinates yields
\begin{align*}
  &\frac{dr}{dt}
  =
  \frac{\cos\theta}{2m-1}r^{2m}
  +\varepsilon\left(P^H_{2m}(\cos\theta,\sin\theta)\cdot\cos\theta+Q^H_{2m}(\cos\theta,\sin\theta)\cdot\sin\theta\right)r^{2m},\\
  &\frac{d\theta}{dt}
  =
  1+\varepsilon\left(Q^H_{2m}(\cos\theta,\sin\theta)\cdot\cos\theta-P^H_{2m}(\cos\theta,\sin\theta)\cdot\sin\theta\right)r^{2m-1}.
\end{align*}
%It is known that the limit cycles surrounding the origin of system \eqref{eq9} do not intersect the curve $d\theta/dt=0$ (see [...], etc). Therefore these limit cycles can be investigated using the differential equation
Then, the Melnikov functions for the system can be obtained by studying the equation
\begin{align}\label{eq26}
%\begin{split}
  \frac{dr}{d\theta}
  &=
  \frac{\displaystyle\frac{\cos\theta}{2m-1} r^{2m}
  +\varepsilon\left(P^H_{2m}(\cos\theta,\sin\theta)\cdot\cos\theta+Q^H_{2m}(\cos\theta,\sin\theta)\cdot\sin\theta\right)r^{2m}}
  {1+\varepsilon\left(Q^H_{2m}(\cos\theta,\sin\theta)\cdot\cos\theta-P^H_{2m}(\cos\theta,\sin\theta)\cdot\sin\theta\right)r^{2m-1}}.
  %\end{split}
\end{align}

By taking advantage of the series expression in $\varepsilon$, equation \eqref{eq26} can be written in the form \eqref{eq14} with
\begin{align*}
  &H(\theta,r)=\sin\theta+r^{1-2m}, \\
  &L(\theta,r)=
  \left(P^H_{2m}(\cos\theta,\sin\theta)\cdot\cos\theta+Q^H_{2m}(\cos\theta,\sin\theta)\cdot\sin\theta\right)r^{2m}\\
  &\indent\indent\indent\indent
  -\frac{\left(Q^H_{2m}(\cos\theta,\sin\theta)\cdot\cos\theta-P^H_{2m}(\cos\theta,\sin\theta)\cdot\sin\theta\right)\cos\theta}{2m-1}r^{4m-1},\\
  &\qquad\qquad
  \qquad (\theta,r)\in[-\pi,\pi]\times\mathbb R^+.
\end{align*}
Some easy calculations show that the periodic solution of \eqref{eq26}$|_{\varepsilon=0}$ with the initial value $\rho$ at $\theta=-\pi$ is
$$r_0(\theta,\rho)=\frac{\rho}{\left(1-\rho^{2m-1}\sin\theta\right)^{\frac{1}{2m-1}}},\indent  \rho\in(0,1).$$

Note that $\int_{-\pi}^{\pi}\sin^{i}\theta\cos^j\theta d\theta=0$ when $i,j\in\mathbb Z^+_0$ and $i+j$ is odd. Hence according to Lemma \ref{lem2.8}, the first order Melnikov funcion for equation \eqref{eq26} (also system \eqref{eq9}) is
\begin{align*}
M_1(\rho)
&=
%-(2m-1)\int_{-\pi}^{\pi}\left(P^H_{2m}(\cos\theta,\sin\theta)\cdot\cos\theta+Q^H_{2m}(\cos\theta,\sin\theta)\cdot\sin\theta\right)d\theta\\
%&\indent
\rho^{2m-1}\int_{-\pi}^{\pi}
\frac{\left(Q^H_{2m}(\cos\theta,\sin\theta)\cdot\cos\theta-P^H_{2m}(\cos\theta,\sin\theta)\cdot\sin\theta\right)\cos\theta}{1-\rho^{2m-1}\sin\theta}d\theta\\
&=
\rho^{2m-1}\int_{-\pi}^{\pi}
\frac{\left(Q^H_{2m}(\sin\theta,\cos\theta)\cdot\sin\theta-P^H_{2m}(\sin\theta,\cos\theta)\cdot\cos\theta\right)\sin\theta}{1-\rho^{2m-1}\cos\theta}d\theta,
\end{align*}
where in the second equality we have used the change of variables $\theta\mapsto\frac{\pi}{2}-\theta$ and the periodicity of sine and cosine functions.
Moreover, using the parities of these trigonometrical functions and the equality $\sin^{2k}\theta=(1-\cos^2\theta)^k$, we can write $M_1$ as
\begin{align*}
M_1(\rho;\bm{\lambda}_m)
&=
%-(2m-1)\int_{-\pi}^{\pi}\left(P^H_{2m}(\cos\theta,\sin\theta)\cdot\cos\theta+Q^H_{2m}(\cos\theta,\sin\theta)\cdot\sin\theta\right)d\theta\\
%&\indent
%\rho^{2m-1}\int_{-\pi}^{\pi}
%\frac{\left(\lambda_0\sin^{2m}\theta+\lambda_1\cos^{2}\theta\sin^{2(m-1)}\theta+\cdots+\lambda_m\cos^{2m}\theta\right)\sin^2\theta}{1-\rho^{2m-1}\cos\theta}d\theta\\
%&=
\rho^{2m-1}\int_{-\pi}^{\pi}
\frac{\left(\lambda_0+\lambda_1\cos^{2}\theta+\cdots+\lambda_m\cos^{2m}\theta\right)\sin^2\theta}{1-\rho^{2m-1}\cos\theta}d\theta,
\end{align*}
where the coefficients
 $\lambda_0,\cdots,\lambda_m$ are determined by $(P^H_{2m},Q^H_{2m})$ and can be chosen arbitrarily.
If we additionally take the Chebyshev polynomial of the first
kind into account, then there exists a bijective map $T:\mathbb R^{m+1}\rightarrow\mathbb R^{m+1}$ such that $\hat{\bm{\lambda}}_m=T(\bm{\lambda}_m)$ satisfies
\begin{align*}
M_1(\rho;\bm{\lambda}_m)
&=
\rho^{2m-1}\int_{-\pi}^{\pi}
\frac{\left(\hat{\lambda}_0+\hat{\lambda}_1\cos2\theta+\cdots+\hat{\lambda}_m\cos2m\theta\right)\sin^2\theta}{1-\rho^{2m-1}\cos\theta}d\theta\\
&=
2\rho^{2m-1}
\sum_{k=0}^{m}\hat{\lambda}_k\mathcal C^{E}_{2k}(\rho^{2m-1}),
\end{align*}
where $\mathcal C^E_{2k}$'s are defined in \eqref{eq4} with $E=(0,\pi)$, $\xi_E=\sin^2\theta$ and $\alpha=1$.

Now applying the ECT-system $\left\{\mathcal C^E_{0}(y),\mathcal C^E_{1}(y),\cdots,\mathcal C^E_{2m}(y)\right\}$, as stated in statement (i) of Proposition \ref{prop1}, we get that the function $\sum_{k=0}^{m}\hat{\lambda}_k\mathcal C^{E}_{2k}(y)$ has at most $2m$ isolated zeros on $(-1,1)$, counted with multiplicities. Observe that each $\mathcal C^E_{2k}$ is even. Hence, the number of positive isolated zeros of $\sum_{k=0}^{m}\hat{\lambda}_k\mathcal C^{E}_{2k}(y)$ is at most $m$ (counted with multiplicities). On the other hand, the Chebyshev property of the family also ensures the independence of $\mathcal C^{E}_{0},\mathcal C^{E}_{2},\cdots,\mathcal C^{E}_{2m}$. Thus taking Theorem \ref{thm2.5}, Lemma \ref{lem2.2} and the bijectivity of $T$ into account, we know that this upper bound is reachable. Accordingly, the maximum number of isolated zeros of the Melnikov function $M_1$ on $(0,1)$ is $m$, counted with multiplicities (the substitution $y=\rho^{2m-1}$ is a diffeomorphism on $(0,1)$).

\begin{remark}\label{rem2}
{\rm
 The above result in fact means that, there exists planar polynomial differential system with homogeneous nonlinearity of any prescribed even degree $n$, such that the system has $\frac{n}{2}$ limit cycles surrounding the origin (a weak focus). Furthermore, if we add a suitable small extra linear perturbation under which the stability of the origin is changed, there happens a Hopf bifurcation. %, then the origin becomes a strong focus and the number of such limit cycles can be increased to at least $\frac{n}{2}+1$.
 Thus, we are able to complete Table \ref{table1} with the next result.

 \begin{proposition}\label{p*1}
 In Table \ref{table1} with $n$ even,
   $\mathcal H(n)\geq \frac{n}{2} $ in the case of weak focus, whereas  $\mathcal H(n)\geq  \frac{n}{2}+1$ in the case of  strong focus.
   \end{proposition}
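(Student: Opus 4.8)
The plan is to exhibit, for each even $n=2m$, an explicit realizing vector field and then to upgrade it by a single linear perturbation. The computation preceding Remark \ref{rem2} already carries the main load: it shows that after the diffeomorphism $y=\rho^{2m-1}$ on $(0,1)$ the first order Melnikov function of system \eqref{eq9} reads $M_1(\rho)=2\rho^{2m-1}\,f(y)$ with $f(y)=\sum_{k=0}^{m}\hat\lambda_k\mathcal C^{E}_{2k}(y)$, where $\{\mathcal C^{E}_{0},\mathcal C^{E}_{2},\dots,\mathcal C^{E}_{2m}\}$ (the choice $E=(0,\pi)$, $\xi_E=\sin^2\theta$, $\alpha=1$, $\nu=1$ in \eqref{eq4}) is an ECT-system on $(-1,1)$ by Proposition \ref{prop1}(i), and the coefficient map $\bm\lambda_m\mapsto\hat{\bm\lambda}_m$ is a bijection. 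Thus I would first choose the homogeneous perturbation $(P^H_{2m},Q^H_{2m})$ so that $f$ has exactly $m$ simple zeros in $(0,1)$; the ECT-property guarantees that such a choice exists and that the zeros can be taken simple. By the first order theory of Lemma \ref{lem2.8}, each simple zero of $M_1$ produces a hyperbolic limit cycle of \eqref{eq9} for all small $\varepsilon>0$, so the perturbed system carries $m=n/2$ limit cycles surrounding the origin.

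Next I would observe that the origin is automatically a weak focus. Since $P^H_{2m}$ and $Q^H_{2m}$ carry no linear part (their degree $2m\geq2$), the linearization of \eqref{eq9} at the origin stays the rotation with eigenvalues $\pm i$, so the origin is monodromic with zero trace, i.e. a center or a weak focus. Its first order stability is read off from $M_1(\rho)$ as $\rho\to0^+$, that is from $f(0)$. Because each $\mathcal C^{E}_{2k}$ is even, $f$ is even; were $f(0)=0$ the zero at $0$ would have even order, which together with the $m$ simple positive zeros and their $m$ mirror images would give more than the $2m$ zeros allowed for an element of the ECT-system on $(-1,1)$. Hence $f(0)\neq0$ is forced as soon as $f$ attains the maximal $m$ positive zeros, so the origin is a weak focus and $\mathcal H(n)\geq n/2$ in the weak focus column of Table \ref{table1}.

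For the strong focus case I would add a small linear perturbation, e.g. adding $\varepsilon\mu x$ and $\varepsilon\mu y$ to the two components of the right hand side of \eqref{eq9}; this turns the eigenvalues into $\mu\varepsilon\pm i$ and hence makes the origin a strong focus. Choosing the sign of $\mu$ opposite to the first order stability of the weak focus triggers a Hopf bifurcation at the origin, creating one small-amplitude limit cycle in an $O(\sqrt{|\mu|})$ neighbourhood of the origin. Since the $m$ limit cycles produced above correspond to simple zeros of $M_1$ they are hyperbolic, hence persist under this additional $O(\varepsilon\mu)$ perturbation, and they remain at a distance bounded away from $0$; therefore the new Hopf cycle is distinct from all of them. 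Counting yields $m+1=n/2+1$ limit cycles around a strong focus, i.e. $\mathcal H(n)\geq n/2+1$.

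The step I expect to require the most care is the bookkeeping in the strong focus case: one must verify that the combined perturbation (the homogeneous part realizing the $m$ medium-amplitude cycles together with the linear part realizing the Hopf cycle) neither merges any two of the $m+1$ cycles nor destroys one of them. Hyperbolicity of the $m$ Melnikov cycles and the separation of scales (medium amplitude versus $O(\sqrt{|\mu|})$) are precisely what make this work, but writing down the quantitative estimates that keep the two scales from interfering, and confirming that the Hopf cycle is genuinely new, are the places where one must be fully explicit.
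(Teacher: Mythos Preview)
Your proposal is correct and follows essentially the same route as the paper. The paper's own argument is just Remark~\ref{rem2}: the Melnikov computation in Section~5.1 gives the $m=n/2$ limit cycles around a weak focus, and the paper then says only that ``if we add a suitable small extra linear perturbation under which the stability of the origin is changed, there happens a Hopf bifurcation,'' which is exactly your strong focus step.

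Your write-up is in fact more careful than the paper's in two places. First, your counting argument that $f(0)\neq0$ (because the even zero at the origin together with the $2m$ mirrored simple zeros would exceed the ECT bound $2m$) actually justifies that the origin is a weak focus rather than a center, a point the paper takes for granted. Second, your separation-of-scales discussion (hyperbolicity of the $m$ Melnikov cycles versus the $O(\sqrt{|\mu|})$ Hopf cycle) makes explicit why the $m+1$ cycles are distinct. Both are genuine clarifications, not departures from the paper's method.
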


We remark that this proposition provides a positive answer to the conjecture posed in \cite{taubes4}.

%(ii) There is a previous paper that should be referred. In \cite{taubes4} the authors prove for the first time that there exists planar polynomial differential system with homogeneous nonlinearity of any given odd degree $n\geq3$, such that the system has $[\frac{n}{2}]$ (resp. $[\frac{n}{2}]+1$) limit cycles surrounding a weak focus (resp. strong focus). Unfortunately, they have not been able to prove such result when $n$ is even. Instead, they raise a conjecture, saying that a similar result to the case when $n$ is even should be true. Here a positive answer to this conjecture is given following statement (i).
}
\end{remark}
\vskip0.3cm

Next we focus on system \eqref{eq11}. In \cite{taubes20} Gasull, L\'azaro and Torregrosa studied the upper bound for the number of limit cycles bifurcating from
the period annulus
\begin{align*}
  \left\{(x,y)\in\mathbb R^2:0<x^2+y^2<\min_{i,j}\{a^2_i,b^2_j\}\right\}
\end{align*}
of system \eqref{eq11}$|_{\varepsilon=0}$.
To this aim they estimate the number of isolated zeros of the first order Melnikov function of the system.
Using the polar coordinates, the problem can be transferred to consider the following perturbed equation
\begin{align*}
  &\frac{dr}{d\theta}
  =
  \varepsilon
  \frac{P_{m}(r\cos\theta,r\sin\theta)\cdot\cos\theta+Q_{m}(r\cos\theta,r\sin\theta)\cdot\sin\theta}
  {\prod_{i=1}^{n_1}(r\cos\theta-a_i)\prod_{j=1}^{n_2}(r\sin\theta-b_j)}
  +o(\varepsilon),
\end{align*}
where $(\theta,r)\in[-\pi,\pi]\times\big(0,\min_{i,j}\{|a_i|,|b_j|\}\big)$.
Clearly, this equation is of the form \eqref{eq14} with $H(\theta,r)=r$, and $r_0(\theta,\rho)=\rho$.
Similar to the argument above, it follows from Lemma \ref{lem2.8} that the first order Melnikov function of the equation (also system \eqref{eq11}) is
\begin{align*}
  &M_1(\rho)
  =
  \int_{-\pi}^{\pi}
\frac{P_{m}(\rho\cos\theta,\rho\sin\theta)\cdot\cos\theta
+Q_{m}(\rho\cos\theta,\rho\sin\theta)\cdot\sin\theta}
  {\prod_{i=1}^{n_1}(\rho\cos\theta-a_i)\prod_{j=1}^{n_2}(\rho\sin\theta-b_j)}d\theta,\\
  &\qquad\qquad \rho\in\big(0,\min_{i,j}\{|a_i|,|b_j|\}\big).
\end{align*}

In order to illustrate our result more clearly,
here we use directly the explicit expression of $M_1$ given in \cite{taubes20}, that is
\begin{align*}
M_1(\rho)
=&
\rho^{-1}\prod_{d\in D}(d-\rho^2)^{-1}
\cdot\left(
\sum_{a\in A} P_{a,\left[\frac{m}{2}\right]+l}(\rho^2)\big(a^2-\rho^2\big)^{-\frac{1}{2}}
%+\sum_{\widetilde b\in B}\widetilde Q_{\widetilde b,\left[\frac{n}{2}\right]+l}(\rho^2)\big(\widetilde b^2-\rho^2\big)^{-\frac{1}{2}}
+R_{\left[\frac{m-1}{2}\right]+l}(\rho^2)
\right),
\end{align*}
where $P_{a,k}$, $R_k$ represent the polynomials of degree $k$, and
\begin{align*}
  &D=\left\{a^2_i+b^2_j:i=1,\cdots,n_1,j=1,\cdots,n_2\right\},\ \qquad l=\text{Card}(D),\\
  &A=\{|a_i|:i=1,\cdots,n_1\}\cup\{|b_j|:j=1,\cdots,n_2\}.
\end{align*}
Then, taking the transformation $z=a_{\min}^2-\rho^2$ with $a_{\min} =\min\{a:a\in A\}$, $M_1$ can be written as
\begin{align}\label{eq45}
  \rho\prod_{d\in D}(d-\rho^2)M_1(\rho)
  =
  \sum\nolimits_{\sqrt{\hat a+a^2_{\min}}\in A}\left(\hat P_{a,\left[\frac{m}{2}\right]+l}(z)(z+\hat a)^{-\frac{1}{2}}
+\hat R_{\left[\frac{m-1}{2}\right]+l}(z)\right),
\end{align}
where $\hat P_{a,k}$ and $\hat R_k$ are polynomials of degree $k$.

We recall that in \cite{taubes20} the zeros of $M_1$ was studied using the expression \eqref{eq45} and the Derivation-Division algorithm.
By contrast note that the right hand side of equality \eqref{eq45} is a linear combination of the elements of the family
\begin{align*}
\left\{1,z,\cdots,z^{\left[\frac{m-1}{2}\right]+l}\right\}
\bigcup
\left(\bigcup_{\hat a+a_{\min}\in A}\left\{(z+\hat a)^{-\frac{1}{2}},z(z+\hat a)^{-\frac{1}{2}},\cdots,z^{\left[\frac{m}{2}\right]+l}(z+\hat a)^{-\frac{1}{2}}\right\}\right).
\end{align*}
Hence, according to statement (ii) of Proposition \ref{prop1}, the number of isolated zeros of $M_1$ on $\big(0,\min_{i,j}\{|a_i|,|b_j|\}\big)$ (counted with multiplicities), is at most
\begin{align}\label{eq46}
  \text{Card}(A)\cdot\left(\left[\frac{m}{2}\right]+l+1\right)+\left[\frac{m-1}{2}\right]+l.
\end{align}
Observe that $\text{Card}(A)\leq n_1+n_2$ and $l\leq n_1n_2$. Thus this bound does not exceed
\begin{align}\label{eq47}
  (n_1+n_2)\cdot\left(\left[\frac{m}{2}\right]+n_1n_2+1\right)+\left[\frac{m-1}{2}\right]+n_1n_2.
\end{align}
\begin{remark}\label{rem3}
  {\rm
  The numbers in \eqref{eq46} and \eqref{eq47} are exactly the upper bounds given in \cite{taubes20}, which are the main result of that work.
}
\end{remark}

\subsection{Piecewise smooth planar differential systems}\label{an
appropriate label}
It is easy to see that the unperturbed system \eqref{eq10}$|_{\varepsilon=0}$ has a center at the origin with the first integral $x^2+y^2$ and the period annulus
$
  \left\{(x,y)\in\mathbb R^2:0<x^2+y^2<|a|^{-1}\right\}.
$
In order to obtain the first order Melnikov function
we again use the polar coordinates and write the perturbed system in
\begin{align}\label{eq30}
  &\frac{dr}{d\theta}
  =
  \varepsilon
  \frac{P_{m}(r\cos\theta,r\sin\theta)\cdot\cos\theta+Q_{m}(r\cos\theta,r\sin\theta)\cdot\sin\theta}
  {1-ar\cos\theta}
  +o(\varepsilon),
\end{align}
where
\begin{align}\label{eq25}
\begin{split}
  &\left(
\begin{aligned}
&P_m(r\cos\theta,r\sin\theta)\\
&Q_m(r\cos\theta,r\sin\theta)
\end{aligned}
\right)\\
  &\indent=
  \left\{
                  \begin{aligned}
                   &\left(
\begin{aligned}
&P_{m,0}(r\cos\theta,r\sin\theta)\\
&Q_{m,0}(r\cos\theta,r\sin\theta)
\end{aligned}
\right),
                   & \theta\in E_0:=(\vartheta_0,\vartheta_1),\ r\in\big(0,|a|^{-1}\big),\qquad \qquad  \\
                     &\qquad\qquad \vdots & \vdots \qquad \qquad \qquad\qquad \qquad  \\
                   &\left(
\begin{aligned}
&P_{m,n-1}(r\cos\theta,r\sin\theta)\\
&Q_{m,n-1}(r\cos\theta,r\sin\theta)
\end{aligned}
\right),
                   & \theta\in E_{n-1}:=(\vartheta_{n-1},\vartheta_n),\ r\in\big(0,|a|^{-1}\big),\qquad  \\
                   &\left(
\begin{aligned}
&P_{m,n}(r\cos\theta,r\sin\theta)\\
&Q_{m,n}(r\cos\theta,r\sin\theta)
\end{aligned}
\right),
                   & \theta\in E_n:=(\vartheta_n,\pi)\cup[-\pi,\vartheta_0),\ r\in\big(0,|a|^{-1}\big).
                  \end{aligned}
                \right.
\end{split}
\end{align}

For compactness of this paper, we arrange some related results, such as Proposition \ref{prop6.1}, Lemma \ref{lem6.1} and their proofs in Appendix A.2. Applying Lemma \ref{lem2.8} and Proposition \ref{prop6.1} yields the first order Melnikov function of equation \eqref{eq30}
\begin{align}\label{eq42}
\begin{split}
  M_1(\rho)
  =
  &\sum_{s=0}^{n}\int_{E_s}
\frac{P_{m,s}(\rho\cos\theta,\rho\sin\theta)\cdot\cos\theta
+Q_{m,s}(\rho\cos\theta,\rho\sin\theta)\cdot\sin\theta}
  {1-a\rho \cos\theta}d\theta\\
%  =
%  &
%  \sum_{s=0}^{n}\left(
%  \sum_{i=0}^{m+1}\sum_{\substack{j=i-1\\step2}}^{m}c^s_{i,j}\rho^j \int_{E_s}
%  \frac{\cos^{i}\theta}{1-a\rho \cos\theta}dt
%  +\sum_{i=0}^{m}\sum_{\substack{j=i\\step2}}^{m}d^s_{i,j}\rho^j \int_{E_s}
%  \frac{\sin\theta\cos^{i}\theta}{1-a\rho \cos\theta}dt
%  \right)\\
  =
  &
  \sum_{s=0}^{n}\left(
  \sum_{(i,p)\in B_1}c^s_{i,i+2p-1}\rho^{i+2p-1} \int_{E_s}
  \frac{\cos^{i}\theta}{1-a\rho \cos\theta}dt\right.\\
  &\qquad \indent\indent\indent
  +\left.\sum_{(i,p)\in B_2}d^s_{i,i+2p}\rho^{i+2p} \int_{E_s}
  \frac{\sin\theta\cos^{i}\theta}{1-a\rho \cos\theta}dt
  \right),
\end{split}
\end{align}
where $\rho\in(0,|a|^{-1})$, the coefficients $c^s_{i,i+2p-1}$'s and $d^s_{i,i+2p}$'s are determined by $(P_{m,s},Q_{m,s})$, and
\begin{align}\label{eq41}
\begin{split}
  B_1
  =&
  \left\{(0,p):p=1,\cdots,\left[\frac{m+1}{2}\right]\right\}\\
  &\qquad \quad \bigcup\left\{(i,p):i=1,\cdots,m+1, \ p=0,\cdots,\left[\frac{m-i+1}{2}\right]\right\},\\
  B_2
  =&
  \left\{(i,p):i=0,\cdots,m, \ p=0,\cdots,\left[\frac{m-i}{2}\right]\right\}.
\end{split}
\end{align}
We stress that all these coefficients $c^s_{i,i+2p-1}$'s and $d^s_{i,i+2p}$'s can be chosen arbitrarily due to Proposition \ref{prop6.1} and Lemma \ref{lem6.1} in the appendix.

In order to simplify $M_1$ we introduce two more notations
\begin{align}\label{eq4.1}
 \begin{array}{l}
 \displaystyle
 C^{E}_{k,\alpha}(y):=\int_{E}\frac{\cos^k\theta}{(y-\cos\theta)^{\alpha}}d\theta,\\
 \displaystyle
 S^{E}_{k,\alpha}(y):=\int_{E}\frac{\sin\theta\cos^k\theta}{(y- \cos\theta)^{\alpha}}d\theta,
 \end{array}\qquad
   E\subset\mathbb R,\ k\in\mathbb Z^{+}_{0},\ \alpha\in\mathbb R.
\end{align}
Also for convenience we will treat the coefficients as parameters, setting
\begin{align*}
\bm{\mu}
&=
\big(c^s_{i,i+2p-1},d^s_{j,j+2q};s=0,\cdots,n,(i,p)\in B_1,(j,q)\in B_2\big)\\
&\quad \in
\mathbb R^{(n+1)\left({\rm Card(B_1)}+{\rm Card(B_2)}\right)}.
\end{align*}
Then, taking $y=(a\rho)^{-1}$, we get from a direct calculation that $M_1(\rho)=y^{1-m}\hat M_1(y;\bm{\mu})$, where
\begin{align*}
\hat M_1&(y;\bm{\mu})\\
%&=
%\sum^{n}_{s=0}
%\left(
%\sum_{i=0}^{m+1}\sum_{\substack{j=i-1\\step2}}^{m}
%c^s_{i,j}\cdot y^{m-j} C^{E_s}_{i,1}(y)
%+\sum_{i=0}^{m}\sum_{\substack{j=i\\step2}}^{m}
%  d^s_{i,j}\cdot y^{m-j} S^{E_s}_{i,1}(y)
%\right)\\
&=
\sum^{n}_{s=0}
\left(
\sum_{(i,p)\in B_1}
c^s_{i,i+2p-1}\cdot y^{m-(i+2p-1)} C^{E_s}_{i,1}(y)
+\sum_{(i,p)\in B_2}
  d^s_{i,i+2p}\cdot y^{m-(i+2p)} S^{E_s}_{i,1}(y)
\right).
\end{align*}
Observe that the zeros of $M_1$ on $(0,|a|^{-1})$, one to one, corresponds to the zeros of $\hat M_1$ on $(1,+\infty)$ (resp. $(-\infty,-1)$) when $a>0$ (resp. $a<0$).

Now let us define two spans
  \begin{align}\label{eq44}
  \begin{split}
    \mathcal B_1
    :=
    &
    \sum_{s=0}^{n}\left(
    \left\langle
    y^{m-(i+2p-1)} C^{E_s}_{i,1}; (i,p)\in B_1
    \right\rangle
    %\\
    %&
    +
    %\sum_{s=0}^{n}
    \left\langle
    y^{m-(i+2p)} S^{E_s}_{i,1}; (i,p)\in B_2
    \right\rangle
    \right),\\
    \mathcal B_2
    :=
    &\left\langle1,y,\cdots,y^{m-1}\right\rangle\\
    &+
    \sum_{s=0}^{n}\left(
    \left\langle
    C^{E_s}_{m+1-2p,1}; p=0,\cdots,\left[\frac{m+1}{2}\right]
    \right\rangle
    %\\
    %&
    +
    %\sum_{s=0}^{n}
    \left\langle
    S^{E_s}_{m-2p,1}; p=0,\cdots,\left[\frac{m}{2}\right]
    \right\rangle
    \right).
    \end{split}
  \end{align}
Then one can immediately get that $\hat M_1\in\mathcal B_1$. In addition, following Proposition \ref{prop6} stated in Appendix A.2, we actually have
$
\hat M_1\in\mathcal B_1=\mathcal B_2.
$
Hence, there exists a surjective map
$$\mathcal T:
\mathbb R^{(n+1)\left({\rm Card(B_1)}+{\rm Card(B_2)}\right)}
\rightarrow
\mathbb R^{m+(n+1)\left(\left[\frac{m+1}{2}\right]+\left[\frac{m+1}{2}\right]+2\right)},$$
such that
$
\left(\bm{\zeta}_{m-1},\bm{\eta}_{s,\left[\frac{m+1}{2}\right]},\bm{\lambda}_{s,\left[\frac{m}{2}\right]}; s=0,\cdots,n\right)
=
\mathcal T(\bm{\mu})
$
satisfies
\begin{align}\label{eq38}
\hat M_1(y;\bm{\mu})
=
\sum_{i=0}^{m-1}\zeta_i y^i
+\sum^{n}_{s=0}
\left(
\sum_{p=0}^{\left[\frac{m+1}{2}\right]}
\eta_{s,p} C^{E_s}_{m+1-2p,1}(y)
+\sum_{p=0}^{\left[\frac{m}{2}\right]}
\lambda_{s,p} S^{E_s}_{m-2p,1}(y)
\right).
\end{align}

Note that for $l\in\mathbb Z^+_0$,
  \begin{align}\label{eq40}
 \begin{split}
  &\left(C^{E_s}_{k,\alpha}\right)^{(l)}
  =
  (-1)^l\prod_{i=0}^{l-1}(\alpha+i) C^{E_s}_{k,\alpha+l},\ \
  \left(S^{E_s}_{k,\alpha}\right)^{(l)}
  =
  (-1)^l\prod_{i=0}^{l-1}(\alpha+i) S^{E_s}_{k,\alpha+l}.
\end{split}
\end{align}
Therefore we get by \eqref{eq38} that
\begin{align}\label{eq36}
\hat M_1^{(m)}(y;\bm{\mu})
=
(-1)^m m!
\sum^{n}_{s=0}
\left(
\sum_{p=0}^{\left[\frac{m+1}{2}\right]}
\eta_{s,p} C^{E_s}_{m+1-2p,m+1}(y)
+\sum_{p=0}^{\left[\frac{m}{2}\right]}
\lambda_{s,p} S^{E_s}_{m-2p,m+1}(y)
\right).
\end{align}

On the other hand, from \eqref{eq24} and \eqref{eq37} in Appendix A.1, we obtain  for each $E_s$ the following equalities of the spans:
\begin{align}\label{eq39}
\begin{split}
    &\left\langle
    C^{E_s}_{m+1-2p,m+1}(y);\ p=0,\cdots,\left[\frac{m+1}{2}\right]
    \right\rangle\\
    &\indent=
    \left\langle
    y^{-(m+1)}\mathcal C^{E_s}_{m+1-2p}(y^{-1});\ \xi_{E_s}=1, \alpha=m+1, \nu=1, p=0,\cdots,\left[\frac{m+1}{2}\right]
    \right\rangle,\\
    &\left\langle
    S^{E_s}_{m-2p,m+1}(y);p=0,\cdots,\left[\frac{m}{2}\right]
    \right\rangle\\
    &\indent=
    \left\langle
    y^{-(m+1)}\mathcal S^{E_s}_{m+1-2p}(y^{-1});\ \xi_{E_s}=1, \alpha=m+1, \nu=1, p=0,\cdots,\left[\frac{m}{2}\right]
    \right\rangle.
\end{split}
  \end{align}
Consequently, there exists a bijective map
$$
\widehat{\mathcal T}
:
\mathbb R^{(n+1)\left(\left[\frac{m+1}{2}\right]+\left[\frac{m}{2}\right]+2\right)}
\rightarrow
\mathbb R^{(n+1)\left(\left[\frac{m+1}{2}\right]+\left[\frac{m}{2}\right]+2\right)},
$$
such that
\[
\left(\hat{\bm{\eta}}_{s,\left[\frac{m+1}{2}\right]},\hat{\bm{\lambda}}_{s,\left[\frac{m}{2}\right]}; s=0,\cdots,n\right)
=
\widehat{\mathcal T}
\left(\bm{\eta}_{s,\left[\frac{m+1}{2}\right]},\bm{\lambda}_{s,\left[\frac{m}{2}\right]}; s=0,\cdots,n\right)
=
\widehat{\mathcal T}\circ\mathcal T(\bm{\mu}).
\]
That is, the equality
\begin{align}\label{eq32}
\begin{split}
y^{m+1}\hat M_1^{(m)}(y;\bm{\mu})
=
\sum^{n}_{s=0}
\left(
\sum_{p=0}^{\left[\frac{m+1}{2}\right]}
\hat{\eta}_{s,p} \mathcal C^{E_s}_{m+1-2p}(y^{-1})
+\sum_{p=0}^{\left[\frac{m}{2}\right]}
\hat{\lambda}_{s,p} \mathcal S^{E_s}_{m+1-2p}(y^{-1})
\right)
\end{split}
\end{align}
holds, where
 $\mathcal C^{E_s}_{m+1-2p}$ and $\mathcal S^{E_s}_{m+1-2p}$ are those defined in \eqref{eq4} by taking $\xi_{E_s}=1$, $\alpha=m+1$ and $\nu=1$.

We can estimate the number of isolated zeros of $M_1$ (i.e., $\hat M_1$) now. We limit to some number of separation redials, where four of them have appeared in the references \cite{taubes13,taubes14,taubes15,taubes15.3,taubes15.13,taubes15.22,taubes29} (see statements (i)-(iv) of Proposition \ref{prop8} and Figure \ref{fig1}), and two new ones (statements (v) and (vi)) are added now for the first time, which exhibits some clues that how the maximum number of isolated zeros of $M_1$ can be affected by the number $n$ and the distribution of the angles $\vartheta_0,\cdots,\vartheta_n$.

\begin{figure}[h]
\begin{center}
  \includegraphics[width=12.8cm]{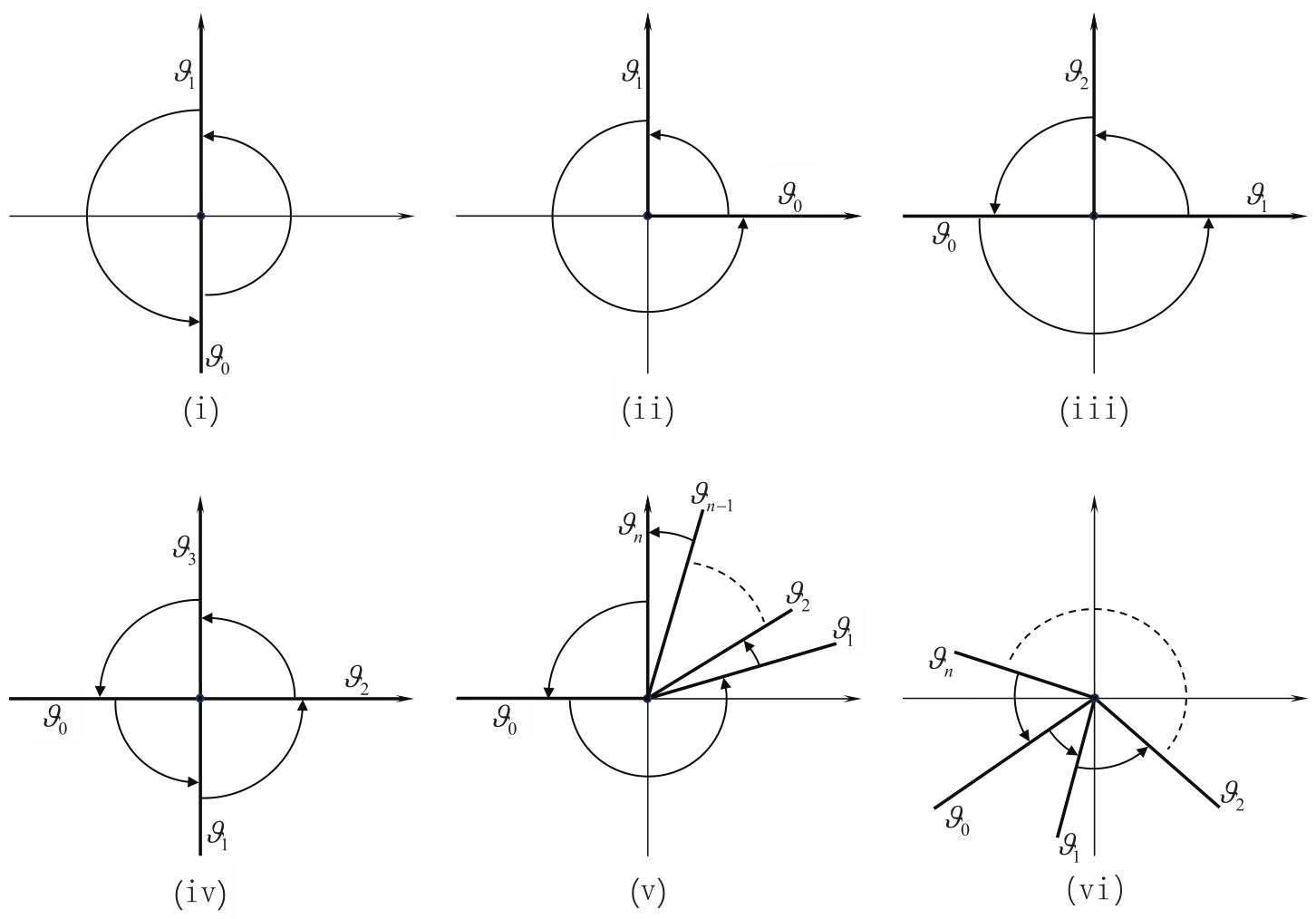}\\
  \caption{Separation radials studied in statements (i)-(vi) of Proposition \ref{prop8}.}\label{fig1}
\end{center}
\end{figure}

%\begin{figure}[h]
%\begin{center}
%  \includegraphics[scale=0.6, viewport=15 100 750 592]{figure11.eps}\\
%  \caption{Separation radials studied in statements (i)-(vi) of Proposition \ref{prop8}.}\label{fig1}
%\end{center}
%\end{figure}

\begin{proposition}\label{prop8}
  Let $Z(M_1)$ be the maximum number of isolated zeros of $M_1$ on $(0,|a|^{-1})$, counted with multiplicities. The following statements hold.
  \begin{itemize}
    \item [(i)] $Z(M_1)=2\left[\frac{m+1}{2}\right]+m+1$ if $n=1$ and $\vartheta_0=-\frac{\pi}{2}$, $\vartheta_1=\frac{\pi}{2}$ $($i.e., $E_0=(-\frac{\pi}{2},\frac{\pi}{2})$, $E_1=(\frac{\pi}{2},\pi)\cup[-\pi,-\frac{\pi}{2})$$)$.
    \item [(ii)] $Z(M_1)=2\left[\frac{m+1}{2}\right]+\left[\frac{m}{2}\right]+m+2$ if $n=1$ and $\vartheta_0=0$, $\vartheta_1=\frac{\pi}{2}$ $($i.e., $E_0=(0,\frac{\pi}{2})$, $E_1=(\frac{\pi}{2},\pi)\cup[-\pi,0)$$)$.
    \item [(iii)] $Z(M_1)=2\left(\left[\frac{m+1}{2}\right]+\left[\frac{m}{2}\right]\right)+m+3$ if $n=2$ and $\vartheta_0=-\pi$, $\vartheta_1=0$, $\vartheta_2=\frac{\pi}{2}$ $($i.e., $E_0=(-\pi,0)$, $E_1=(0,\frac{\pi}{2})$, $E_2=(\frac{\pi}{2},\pi)$$)$.
    \item [(iv)] $Z(M_1)=2\left(\left[\frac{m+1}{2}\right]+\left[\frac{m}{2}\right]\right)+m+3$ if $n=3$ and $\vartheta_0=-\pi$, $\vartheta_1=-\frac{\pi}{2}$, $\vartheta_2=0$, $\vartheta_3=\frac{\pi}{2}$ $($i.e., $E_0=(-\pi,-\frac{\pi}{2})$, $E_1=(-\frac{\pi}{2},0)$, $E_2=(0,\frac{\pi}{2})$, $E_3=(\frac{\pi}{2},\pi)$$)$.
    \item [(v)]  $Z(M_1)=n\big(\left[\frac{m+1}{2}\right]+\left[\frac{m}{2}\right]+2\big)+\left[\frac{m+1}{2}\right]+m$ if $n\geq2$ and $\vartheta_0=-\pi$, $\vartheta_1\in(0,\frac{\pi}{2})$ and $\vartheta_{n}=\frac{\pi}{2}$ $($i.e., $E_0=(-\pi,\vartheta_1)$, $E_1=(\vartheta_1,\vartheta_2)$, $\cdots$, $E_{n-1}=(\vartheta_{n-1},\frac{\pi}{2})$, $E_{n}=(\frac{\pi}{2},\pi)$$)$.
    \item [(vi)] $Z(M_1)\leq({\rm Card}(\varTheta)-1)\big(\left[\frac{m+1}{2}\right]+\left[\frac{m}{2}\right]+2\big)+m-1$ for general $n\in \mathbb Z^+$ and $\vartheta_0<\vartheta_1<\cdots<\vartheta_n\in[-\pi,\pi)$, where $\varTheta=\{0,\frac{\pi}{2},\pi,|\vartheta_0|,\cdots,|\vartheta_n|\}$.
  \end{itemize}
\end{proposition}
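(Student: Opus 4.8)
The plan is to reduce the entire proposition to counting the zeros of one linear combination of the integrals $\mathcal C^{E_s}_k,\mathcal S^{E_s}_k$ from \eqref{eq4} and to read $Z(M_1)$ off from the dimension of the space they span. Because $M_1$ and $\hat M_1$ have the same zeros on the transformed domain and $\hat M_1\in\mathcal B_2$ with $\mathcal T$ surjective and $\widehat{\mathcal T}$ bijective, I would start from \eqref{eq38}: differentiating $m$ times kills the polynomial block $\langle 1,y,\dots,y^{m-1}\rangle$, and by \eqref{eq40}, \eqref{eq39} and \eqref{eq32} the function $y^{m+1}\hat M_1^{(m)}$ becomes a combination of the $\mathcal C^{E_s}_{m+1-2p}(y^{-1})$ and $\mathcal S^{E_s}_{m+1-2p}(y^{-1})$ (now with $\alpha=m+1\in\mathbb R\backslash\mathbb Z^-_0$). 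An iterated use of Lemma \ref{lem2.3} then transfers the count back to $\hat M_1$: if the surviving integrals form an ECT-system of dimension $D_0$ then $\hat M_1^{(m)}$ has at most $D_0-1$ zeros, whence $Z=D_0+m-1$, the extra $m$ coming from the antiderivatives of the polynomial block; this bound is attained once the coefficients of the surviving integrals can be prescribed freely.

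The computational engine is the reflection $\theta\mapsto-\theta$, under which $\mathcal C^{-E}_k=\mathcal C^{E}_k$ but $\mathcal S^{-E}_k=-\mathcal S^{E}_k$ (here $\xi_{E_s}=1$). I would fold the circle onto $(0,\pi)$ and cut it at the points of $\varTheta=\{0,\tfrac{\pi}{2},\pi,|\vartheta_0|,\dots,|\vartheta_n|\}$, obtaining $\#J=\mathrm{Card}(\varTheta)-1$ sub-arcs $J$; the cut at $\tfrac{\pi}{2}$ is essential, since the surviving indices $m+1-2p$ run over every other integer, putting the families into the type \eqref{eq6} with $\nu=1$, which by Proposition \ref{prop1}(i) are ECT-systems only on sub-arcs of $(0,\tfrac{\pi}{2})\cup(\tfrac{\pi}{2},\pi)$. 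Expressing each original integral as a sum over folded sub-arcs reveals the decisive dichotomy: the cosine integral $\mathcal C^{J}_k$ occurs for every sub-arc $J$ (the two orientations add), whereas the sine integral $\mathcal S^{J}_k$ survives only for sub-arcs whose halves $J$ and $-J$ lie in different original sectors $E_s$ (when they lie in the same sector the two contributions cancel). Consequently $D_0=(\#J)\big(\left[\tfrac{m+1}{2}\right]+1\big)+(\#J_{\mathrm{diff}})\big(\left[\tfrac{m}{2}\right]+1\big)$, with $\#J_{\mathrm{diff}}$ the number of ``different-sector'' sub-arcs.

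It remains to count $\#J$ and $\#J_{\mathrm{diff}}$ and to certify the ECT-property. In (i) both sectors are symmetric about $0$, so every sub-arc is same-sector and $\#J_{\mathrm{diff}}=0$; in (ii) and (v) exactly one sub-arc folds into a single (circle-wrapping) sector, giving $\#J_{\mathrm{diff}}=\#J-1$; in (iii) and (iv) each sub-arc straddles two distinct sectors, so $\#J_{\mathrm{diff}}=\#J=2$. In every case the surviving integrals are exactly a family of type \eqref{eq6} over the non-intersecting sub-arcs (the flags $a_i,b_i$ recording which blocks survive), so Proposition \ref{prop1}(i) makes them an ECT-system on $(-1,1)$, and the freeness of their coefficients after folding yields sharpness; substituting the counts into $Z=D_0+m-1$ and using $\left[\tfrac{m+1}{2}\right]+\left[\tfrac{m}{2}\right]=m$ gives the stated equalities. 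For the general configuration (vi) I would only use $\#J_{\mathrm{diff}}\leq\#J=\mathrm{Card}(\varTheta)-1$, so that $\hat M_1^{(m)}$ sits inside an ECT-system of dimension at most $(\mathrm{Card}(\varTheta)-1)(m+2)$, which delivers $Z\leq(\mathrm{Card}(\varTheta)-1)\big(\left[\tfrac{m+1}{2}\right]+\left[\tfrac{m}{2}\right]+2\big)+m-1$.

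The main obstacle I anticipate lies entirely in the folding bookkeeping for the sharp cases (i)--(v): one must decide, sub-arc by sub-arc, whether a reflected pair $J,-J$ falls in the same sector or in two different sectors — this lone distinction fixes $\#J_{\mathrm{diff}}$ and hence the precise value of $Z$ — and one must verify that the fold map has full rank onto the claimed surviving integrals, so that their coefficients are genuinely independent and the extremal bound is realized rather than merely bounded. By contrast the ECT-verification is routine: once the sub-arcs avoid $\tfrac{\pi}{2}$ it reduces verbatim to Proposition \ref{prop1}(i) (via Theorem \ref{thm4} with the monotone $g=\cos\theta$), so the whole difficulty is the combinatorial-geometric accounting of the folded sectors together with the freeness of the reduced coefficients.
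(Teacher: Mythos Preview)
Your proposal is correct and follows essentially the same route as the paper: differentiate $m$ times via \eqref{eq38}--\eqref{eq32}, fold each sector onto $(0,\tfrac{\pi}{2})\cup(\tfrac{\pi}{2},\pi)$ using the parities $\mathcal C^{-E}_k=\mathcal C^{E}_k$, $\mathcal S^{-E}_k=-\mathcal S^{E}_k$, apply Proposition~\ref{prop1}(i) (the family \eqref{eq6}) to the resulting non-intersecting sub-arcs, and then recover $m$ extra zeros from the polynomial block; the paper carries this out case by case, while you package it uniformly via the counts $\#J$ and $\#J_{\mathrm{diff}}$, which is a genuine notational improvement but not a different argument.

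One small point of divergence: for sharpness the paper does \emph{not} iterate Lemma~\ref{lem2.3} to conclude that the enlarged family $\{1,y,\dots,y^{m-1}\}\cup\{C^{E_s}_{\cdot,1},S^{E_s}_{\cdot,1}\}$ is an ECT-system; instead it argues more weakly that these functions are linearly independent (via \eqref{eq40} and the ECT property of the $m$-th derivatives), invokes Theorem~\ref{thm2.5} to get a point where the top Wronskian is nonzero, and then uses Lemma~\ref{lem2.2} locally near that point to realize the maximal multiplicity. Your Lemma~\ref{lem2.3} route would give the stronger global ECT conclusion, but to make it rigorous you must be careful that the equality of spans in \eqref{eq39} (which passes from $\cos^k\theta$ to $\cos k\theta$ via Chebyshev polynomials) preserves the \emph{ordered} ECT property, not just the span --- this does hold here because Theorem~\ref{thm4} applies directly to either generating set, but it is worth flagging since ECT is order-sensitive.
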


\begin{proof}
Denote by $Z(\hat M_1)$ the maximum number of isolated zeros of $\hat M_1$ on $(1,+\infty)$ (resp. $(-\infty,-1)$) when $a>0$ (resp. $a<0$). Then the arguments presented before this proposition shows that $Z(M_1)=Z(\hat M_1)$. In what follows we will focus on $\hat M_1$ and prove the statements one by one. Also without loss of generality we suppose that $a>0$.

(i) First by the parities of cosine and sine functions, it is easy to check that
\begin{align*}
\mathcal C^{E_0}_{k}=2\mathcal C^{E_0^+}_{k},\ \ \mathcal C^{E_1}_{k}=2\mathcal C^{E_1^+}_{k},\ \
\mathcal S^{E_0}_{k}=\mathcal S^{E_1}_{k}=0,
\end{align*}
where $E^+_0=(0,\frac{\pi}{2})$ and $E^+_1=(\frac{\pi}{2},\pi)$. Then the equality \eqref{eq32} becomes
\begin{align*}
y^{m+1}&\hat M_1^{(m)}(y;\bm{\mu})
=
2\sum^{1}_{s=0}
\sum_{p=0}^{\left[\frac{m+1}{2}\right]}
\hat{\eta}_{s,p} \mathcal C^{E^+_s}_{m+1-2p}(y^{-1}).
\end{align*}
Observe that $y\mapsto y^{-1}$ is a diffeomorphism from $(1,+\infty)$ to  $(0,1)$. Therefore, it follows from statement (i) of Proposition \ref{prop1} (the second ECT-system in \eqref{eq6}) that
$\hat M_1^{(m)}$ has at most $2\left[\frac{m+1}{2}\right]+1$ zeros on $(1,+\infty)$, counted with multiplicities. This means that $Z(\hat M_1)\leq 2\left[\frac{m+1}{2}\right]+m+1$.

On the other hand, statement (i) of Proposition \ref{prop1} ensures that the functions in the set $\left\{\mathcal C^{E^+_s}_{m+1-2p}(y^{-1}): s=0,1, p=0,\cdots,\left[\frac{m+1}{2}\right]\right\}$ are linearly independent on $(1,+\infty)$, and so does the functions in the set  $\left\{C^{E_s}_{m+1-2p,m+1}(y): s=0,1, p=0,\cdots,\left[\frac{m+1}{2}\right]\right\}$ taking \eqref{eq39} into account. Hence, we have by \eqref{eq40} that the functions in the set $$\{1,y,\cdots,y^{m-1}\}\cup\left\{C^{E_s}_{m+1-2p,1}(y): s=0,1, p=0,\cdots,\left[\frac{m+1}{2}\right]\right\}$$
are also linearly independent on $(1,+\infty)$. From \eqref{eq38}, by applying Theorem \ref{thm2.5}, Lemma \ref{lem2.2} and the surjectivity of $\mathcal T$, there exists $\bm{\mu}$ such that $\hat M_1(y;\bm{\mu})$ has exactly $2\left[\frac{m+1}{2}\right]+m+1$ simple zeros on $(1,+\infty)$. Consequently, $Z(M_1)=Z(\hat M_1)= 2\left[\frac{m+1}{2}\right]+m+1$.

(ii) Again using the parities of cosine and sine functions we have
\begin{align*}
\mathcal C^{E_1}_{k}=2\mathcal C^{E_1^+}_{k}+\mathcal C^{E_0}_{k},\ \
\mathcal S^{E_1}_{k}=-\mathcal S^{E_0}_{k},
\end{align*}
where $E^+_1=(\frac{\pi}{2},\pi)$. Then the equality \eqref{eq32} is reduced to
\begin{align*}
\begin{split}
y^{m+1}\hat M_1^{(m)}(y;\bm{\mu})
&=
\sum_{p=0}^{\left[\frac{m+1}{2}\right]}
\left(
(\hat{\eta}_{0,p}+\hat{\eta}_{1,p}) \mathcal C^{E_0}_{m+1-2p}(y^{-1})
+2\hat{\eta}_{1,p} \mathcal C^{E_1^+}_{m+1-2p}(y^{-1})
\right)\\
&\indent+\sum_{p=0}^{\left[\frac{m}{2}\right]}
(\hat{\lambda}_{0,p}-\hat{\lambda}_{1,p}) \mathcal S^{E_0}_{m+1-2p}(y^{-1}).
\end{split}
\end{align*}
Similar to the arguments in statement (i), we obtain by statement (i) of Proposition \ref{prop1} that $\hat M_1^{(m)}(y;\bm{\mu})$ has at most $2\left[\frac{m+1}{2}\right]+\left[\frac{m}{2}\right]+2$ zeros on $(1,+\infty)$, and therefore $Z(\hat M_1)\leq 2\left[\frac{m+1}{2}\right]+\left[\frac{m}{2}\right]+m+2$. In addition, we also get that the functions in the set
\begin{align*}
  &\left\{C^{E_0}_{m+1-2p,m+1}(y): p=0,\cdots,\left[\frac{m+1}{2}\right]\right\}\\
&\qquad   \cup\left\{C^{E_1^+}_{m+1-2p,m+1}(y): p=0,\cdots,\left[\frac{m+1}{2}\right]\right\}
%&\\
\cup\left\{S^{E_0}_{m-2p,m+1}(y): p=0,\cdots,\left[\frac{m}{2}\right]\right\}
\end{align*}
are linearly independent on $(1,+\infty)$, and so does the functions in the set
\begin{align*}
  &\{1,y,\cdots,y^{m-1}\}\cup\left\{C^{E_0}_{m+1-2p,1}(y): p=0,\cdots,\left[\frac{m+1}{2}\right]\right\}\\
  &\qquad\cup\left\{C^{E_1}_{m+1-2p,1}(y): p=0,\cdots,\left[\frac{m+1}{2}\right]\right\}
\cup\left\{S^{E_0}_{m-2p,1}(y): p=0,\cdots,\left[\frac{m}{2}\right]\right\}.
\end{align*}
Then, thanks to \eqref{eq38} and the surjectivity of $\mathcal T$, we can assert that there exists $\bm{\mu}$ such that $\hat M_1(y;\bm{\mu})$ has exactly $2\left[\frac{m+1}{2}\right]+\left[\frac{m}{2}\right]+m+2$ simple zeros on $(1,+\infty)$. Accordingly, the assertion holds.

(iii) The conclusion follows exactly from the same arguments as before, with the observation that
\begin{align*}
&\mathcal C^{E_0}_{k}=\mathcal C^{E_1}_{k}+\mathcal C^{E_2}_{k},\ \
\mathcal S^{E_0}_{k}=-\left(\mathcal S^{E_1}_{k}+\mathcal S^{E_2}_{k}\right)
\end{align*}
 and
\begin{align*}
\begin{split}
y^{m+1}\hat M_1^{(m)}(y;\bm{\mu})
&=
\sum_{p=0}^{\left[\frac{m+1}{2}\right]}
\left(
(\hat{\eta}_{0,p}+\hat{\eta}_{1,p}) \mathcal C^{E_1}_{m+1-2p}(y^{-1})
+(\hat{\eta}_{0,p}+\hat{\eta}_{2,p}) \mathcal C^{E_2}_{m+1-2p}(y^{-1})
\right)\\
&\indent+\sum_{p=0}^{\left[\frac{m}{2}\right]}
\left(
(\hat{\lambda}_{1,p}-\hat{\lambda}_{0,p}) \mathcal S^{E_1}_{m+1-2p}(y^{-1})
+(\hat{\lambda}_{2,p}-\hat{\lambda}_{0,p}) \mathcal S^{E_2}_{m+1-2p}(y^{-1})
\right).
\end{split}
\end{align*}

(iv) In this case we have that
\begin{align*}
&\mathcal C^{E_0}_{k}=\mathcal C^{E_3}_{k},\ \ \mathcal C^{E_1}_{k}=\mathcal C^{E_2}_{k},\ \
\mathcal S^{E_0}_{k}=-\mathcal S^{E_3}_{k},\ \ \mathcal S^{E_1}_{k}=-\mathcal S^{E_2}_{k},
\end{align*}
which implies
\begin{align*}
\begin{split}
y^{m+1}\hat M_1^{(m)}(y;\bm{\mu})
&=
\sum_{p=0}^{\left[\frac{m+1}{2}\right]}
\left(
(\hat{\eta}_{1,p}+\hat{\eta}_{2,p}) \mathcal C^{E_2}_{m+1-2p}(y^{-1})
+(\hat{\eta}_{0,p}+\hat{\eta}_{3,p}) \mathcal C^{E_3}_{m+1-2p}(y^{-1})
\right)\\
&\indent+\sum_{p=0}^{\left[\frac{m}{2}\right]}
\left(
(\hat{\lambda}_{2,p}-\hat{\lambda}_{1,p}) \mathcal S^{E_2}_{m+1-2p}(y^{-1})
+(\hat{\lambda}_{3,p}-\hat{\lambda}_{0,p}) \mathcal S^{E_3}_{m+1-2p}(y^{-1})
\right).
\end{split}
\end{align*}
Then exactly as we did in the previous cases, one can obtain that $Z(M_1)=Z(\hat M_1)=2\left(\left[\frac{m+1}{2}\right]+\left[\frac{m}{2}\right]\right)+m+3$.

(v) In the first step of the process one can check that
\begin{align*}
\mathcal C^{E_0}_{k}=2\mathcal C^{E_0^+}_{k}+\sum_{s=1}^{n}\mathcal C^{E_s}_{k},\ \
\mathcal S^{E_0}_{k}=-\sum_{s=1}^{n}\mathcal S^{E_s}_{k},
\end{align*}
where $E_0^+=(0,\vartheta_1)$. Then,
\begin{align*}
\begin{split}
y^{m+1}\hat M_1^{(m)}(y;\bm{\mu})
&=
\sum_{p=0}^{\left[\frac{m+1}{2}\right]}
\left(
2\hat{\eta}_{0,p} \mathcal C^{E_0^+}_{m+1-2p}(y^{-1}) + \sum_{s=1}^{n}(\hat\eta_{0,p}+\hat{\eta}_{s,p}) \mathcal C^{E_s}_{m+1-2p}(y^{-1})
\right)\\
&\indent+\sum_{p=0}^{\left[\frac{m}{2}\right]}
\left(
\sum_{s=1}^{n}(\hat{\lambda}_{s,p}-\hat{\lambda}_{0,p}) \mathcal S^{E_s}_{m+1-2p}(y^{-1})
\right).
\end{split}
\end{align*}
Observe that $E^+_0$ and $E_1,\cdots,E_n$ are non-intersecting and are contained in $(0,\frac{\pi}{2})\cup(\frac{\pi}{2},\pi)$. Hence using the argument as above,
$Z(M_1)=Z(\hat M_1)=n\big(\left[\frac{m+1}{2}\right]+\left[\frac{m}{2}\right]+2\big)+\left[\frac{m+1}{2}\right]+m$.

(vi) In the general case, we have by the parities of cosine and sine functions that
\begin{align}\label{eq28}
 \begin{split}
    \mathcal C^{E_s}_k&=\mathcal C^{E_s\cap[-\pi,\pi)}_k=\mathcal C^{E_{s,1}}_k+\mathcal C^{E_{s,2}}_k+\mathcal C^{E_{s,3}}_k+\mathcal C^{E_{s,4}}_k,\\
    \mathcal S^{E_s}_k&=\mathcal S^{E_s\cap[-\pi,\pi)}_k=\mathcal S^{E_{s,1}}_k+\mathcal S^{E_{s,2}}_k-\mathcal S^{E_{s,3}}_k-\mathcal S^{E_{s,4}}_k,
 \end{split}
  \end{align}
  where $s=0,\cdots,n$ and
 \[
  \begin{array}{ll}
  E_{s,1}=E_{s}\cap(0,\frac{\pi}{2}), & \ \ \  E_{s,2}=E_s\cap(\frac{\pi}{2},\pi),\\
    E_{s,3}=\left\{\theta:-\theta\in E_s\cap(-\pi,-\frac{\pi}{2})\right\}, &\ \ \ E_{s,4}=\left\{\theta:-\theta\in E_s\cap(-\frac{\pi}{2},0)\right\}.
  \end{array}
  \]

  Clearly, the end points of $E_{s,1}$, $E_{s,2}$, $E_{s,3}$ and $E_{s,4}$ are all contained in the set $\varTheta$. We reorder all the elements in $\varTheta$, writing as $0=\tilde\vartheta_0<\tilde\vartheta_1<\cdots<\tilde\vartheta_{\tilde n}=\pi$ with $\tilde n:={\rm Card}(\varTheta)-1$, and define
  $\tilde E_0=(\tilde\vartheta_0,\tilde\vartheta_{1}),\cdots,\tilde E_{\tilde n-1}=(\tilde\vartheta_{\tilde n-1},\tilde\vartheta_{\tilde n})$. Then each of $E_{s,1}\backslash \varTheta$, $E_{s,2}\backslash \varTheta$, $E_{s,3}\backslash \varTheta$ and $E_{s,4}\backslash \varTheta$ is a union of some intervals $\tilde E_i$, $i\in\{0,1,\ldots,\tilde n-1\}$. Accordingly, taking \eqref{eq28} into account, each $\mathcal C^{E_s}_k$ (resp. $\mathcal S^{E_s}_k$) is a linear combination of $\mathcal C^{\tilde E_0}_k,\cdots,\mathcal C^{\tilde E_{\tilde n}}_k$ (resp. $\mathcal S^{\tilde E_0}_k,\cdots,\mathcal S^{\tilde E_{\tilde n}}_k$). This means that  equality \eqref{eq32} can be rewritten as
  \begin{align*}
y^{m+1}\hat M_1^{(m)}(y;\bm{\mu})
&=
\sum^{\tilde n-1}_{s=0}
\left(
\sum_{p=0}^{\left[\frac{m+1}{2}\right]}
\tilde{\eta}_{s,p} \mathcal C^{\tilde E_s}_{m+1-2p}(y^{-1})
+\sum_{p=0}^{\left[\frac{m}{2}\right]}
\tilde{\lambda}_{s,p} \mathcal S^{\tilde E_s}_{m+1-2p}(y^{-1})
\right).
\end{align*}
Since $\tilde E_0,\cdots,\tilde E_{\tilde n}\subseteq(0,\frac{\pi}{2})\cup(\frac{\pi}{2},\pi)$,
from statement (i) of Proposition \ref{prop1} one gets that $\hat M_1^{(m)}$ has at most $\tilde n\big(\left[\frac{m+1}{2}\right]+\left[\frac{m}{2}\right]+2\big)-1= ({\rm Card}(\varTheta)-1)\big(\left[\frac{m+1}{2}\right]+\left[\frac{m}{2}\right]+2\big)-1$ zeros in $(1,+\infty)$, counted with multiplicities. Our assertion follows.

It completes the proof of the proposition.
\end{proof}

\begin{remark}
{\rm For Proposition \ref{prop8} we have the next comments.
\begin{itemize}
\item According to the definition of $\varTheta$ in statement (vi) of Proposition \ref{prop8}, it is clear that $Z(M_1)\leq (n+3)\big(\left[\frac{m+1}{2}\right]+\left[\frac{m}{2}\right]+2\big)+m-1$ for the general case. Although the sharpness of this estimate is not obtained, we still can see that the upper bound is essentially the same as the explicit one obtained in statement (v) as $n$ large enough. In any case, the Chevbyshev families used in this proposition give us an efficient and unified way to analyze the Melnikov functions for the system with different kinds of separation redials.

\item On the other hand, statement (vi) of Proposition \ref{prop8} also tells us that the symmetry of $\vartheta_0,\cdots,\vartheta_n$ is a major factor in controlling $Z(M_1)$. Indeed, let $k$ be the cardinality of the set $\{\vartheta_0,\cdots,\vartheta_n\}\cap\{-\vartheta_0,\cdots,-\vartheta_n\}$, i.e., the number of the symmetric elements of $\vartheta_0,\cdots,\vartheta_n$. Then we have $k\leq\left[\frac{n}{2}\right]+1$ and ${\rm Card}(\varTheta)\leq n-k+4$. Accordingly, $Z(M_1)\leq (n-k+3)\big(\left[\frac{m+1}{2}\right]+\left[\frac{m}{2}\right]+2\big)+m-1$.
As an example, in the natural symmetric case where $\vartheta_0,\cdots,\vartheta_n$ are defined by $\exp((n+1)\vartheta\sqrt{-1})=1$, one can check that $k=\left[\frac{n}{2}\right]+1$. Thus, in this case $Z(M_1)$ does not exceed $\big(\left[\frac{n}{2}\right]+3\big)\big(\left[\frac{m+1}{2}\right]+\left[\frac{m}{2}\right]+2\big)+m-1$, which is near one half of the value in the asymmetric case of statement (v). From this perspective, it seems that the symmetry of the separation radials of a piecewise smooth differential system plays an important role in affecting the number of limit cycles  of the given system.
\end{itemize}
}
\end{remark}

\appendix
\section{}\label{an
appropriate label}
\subsection{Chebyshev properties of the order sets in Proposition \ref{prop7}}
According to the Chebyshev polynomials of the first
kind and of the second kind, for any $i\in \mathbb Z^+$, we have the following important equalities between the linear spans
\begin{align}\label{eq24}
\begin{split}
  &\langle 1,\cos\theta,\cos2\theta\cdots,\cos i\theta\rangle
  =
  \langle 1,\cos\theta,\cos^2\theta,\cdots,\cos^i\theta\rangle, \\
  &\langle 1,\cos2\theta,\cos4\theta\cdots,\cos 2i\theta\rangle
  =
  \langle 1,\cos^2\theta,\cos^4\theta,\cdots,\cos^{2i}\theta\rangle, \\
  &\langle \cos\theta,\cos3\theta\cdots,\cos (2i+1)\theta\rangle
  =
  \langle \cos\theta,\cos^3\theta,\cdots,\cos^{2i+1}\theta\rangle \\
  &\indent\indent\indent\indent\indent\indent\indent\indent\indent\indent\indent\ \ \
  =
  \langle \cos\theta,\cos\theta\cos2\theta,\cdots,\cos\theta\cos2i\theta\rangle, \\
  \end{split}
\end{align}
and
\begin{align}\label{eq37}
\begin{split}
  &\langle \sin\theta,\sin2\theta,\cdots,\sin i\theta\rangle
  =
  \langle \sin\theta,\sin\theta\cos\theta,\cdots,\sin\theta\cos^{i-1}\theta\rangle,\\
  &\langle \sin2\theta,\sin4\theta,\cdots,\sin 2i\theta\rangle
  =
  \langle \sin\theta\cos\theta,\sin\theta\cos^3\theta,\cdots,\sin\theta\cos^{2i-1}\theta\rangle\\
  &\indent\indent\indent\indent\indent\indent\indent\indent\indent\indent\indent\ \ \
  =
  \langle \sin\theta\cos\theta,\sin\theta\cos3\theta,\cdots,\sin\theta\cos(2i-1)\theta\rangle,\\
  &\langle \sin\theta,\sin3\theta,\cdots,\sin (2i+1)\theta\rangle
  =
  \langle \sin\theta,\sin\theta\cos^2\theta,\cdots,\sin\theta\cos^{2i}\theta\rangle\\
  &\indent\indent\indent\indent\indent\indent\indent\indent\indent\indent\indent\ \ \
  =
  \langle \sin\theta,\sin\theta\cos2\theta,\cdots,\sin\theta\cos2i\theta\rangle,
\end{split}
\end{align}
respectively. We now utilize them to prove Proposition \ref{prop7}.

\begin{proof}[Proof of Proposition \ref{prop7}]
%First according to the Chebyshev polynomials of the first
%kind and the second kind, for any $i\in \mathbb Z^+$, we have the following equalities between the linear spans
%\begin{align}\label{eq24}
%\begin{split}
%  &\langle 1,\cos\theta,\cos2\theta\cdots,\cos i\theta\rangle
%  =
%  \langle 1,\cos\theta,\cos^2\theta,\cdots,\cos^i\theta\rangle, \\
%  &\langle \sin\theta,\sin2\theta,\cdots,\sin i\theta\rangle
%  =
%  \langle \sin\theta,\sin\theta\cos\theta,\cdots,\sin\theta\cos^{i-1}\theta\rangle,
%  \end{split}
%\end{align}
%respectively.
Since $\cos\theta$ is monotonically decreasing with respect to $\theta$ on $(0,\pi)$, any polynomial in variable
$\cos\theta$ of degree $i$ has at most $i$ isolated zeros (counted with multiplicities) on $(0,\pi)$, and the upper bound is sharp. From definition, the order set
$\{1,\cos\theta,\cdots,\cos^m\theta\}$ is an ECT-system on $(0,\pi)$, and so does $\{1,\cos\theta,\cdots,\cos m\theta\}$ taking \eqref{eq24} into account. Moreover, note that $\sin\theta\neq0$ on $(0,\pi)$. Hence by Lemma \ref{lem2.4}, $\{\sin\theta,\sin\theta\cos\theta,\cdots,\sin\theta\cos^{m-1}\theta\}$ is also an ECT-system on $(0,\pi)$, and so does $\{\sin\theta,\sin2\theta,\cdots,\sin m\theta\}$ applying \eqref{eq37}.

Consider the third order set in \eqref{eq21}. Due to the assertion proved above and Lemma \ref{lem2.2}, its first $m+1$ corresponding Wronskians on $(0,\pi)$ are
\begin{align*}
&W[1]=1\neq0,\\
&W[1,\cos\theta,\cdots,\cos i\theta]\neq0,\indent  i=1,\cdots,m.
\end{align*}
Moreover, we have by a direct calculation that
\begin{align*}
&W[1,\cos\theta,\cdots,\cos m\theta,\sin m\theta,\sin(m-1)\theta,\cdots,\sin i\theta]\\
&\ \ =\left|
           \begin{array}{ccccc}
             -\sin\theta       & -2\sin2\theta   & \cdots     &-m\sin m\theta   &m\cos m\theta \\
             -\cos\theta       & -2^2\cos2\theta & \cdots     &-m^2\cos m\theta &-m^2\sin m\theta\\
             \sin\theta        & 2^3\sin2\theta & \cdots     &m^3\sin m\theta  &-m^3\cos m\theta \\
             \cos\theta        & 2^4\cos2\theta & \cdots     &m^4\cos m\theta  &m^4\sin m\theta \\
            % -\sin\theta       & \cdots     & -(m-1)^5\sin(m-1)\theta & 4\sin\varphi & 16\sin2\varphi & \cdots & k^{4}\sin k\varphi\\
             \vdots            & \ddots     &  \vdots       &   \vdots      &    \vdots       \\
             \cos^{(K)}\theta  &2^K\cos^{(K)}2\theta& \cdots     & m^K\cos^{(K)}m\theta & m^K\sin^{(K)}m\theta\\
           \end{array}
         \right.\\
&\qquad \qquad\qquad \qquad \qquad \qquad \ \ \indent \left.
           \begin{array}{ccc}
             (m-1)\cos (m-1)\theta                 & \cdots                             & i\sin i\theta \\
             -(m-1)^2\sin (m-1)\theta              & \cdots                             & -i^2\cos i\theta \\
             -(m-1)^3\cos (m-1)\theta              & \cdots                             & -i^{3}\sin i\theta \\
             (m-1)^4\sin (m-1)\theta               & \cdots                             & i^{4}\cos i\theta \\
            % -\sin\theta       & \cdots     & -(m-1)^5\sin(m-1)\theta & 4\sin\varphi & 16\sin2\varphi & \cdots & k^{4}\sin k\varphi\\
                 \vdots                            &   \ddots                           & \vdots \\
            (m-1)^K\sin^{(K)}(m-1)\theta           & \cdots                             &i^{K}\sin^{(K)}i\theta\\
           \end{array}
         \right|\\
&\ \ =\left|
           \begin{array}{ccccc}
             -\sin\theta       & -2\sin2\theta   & \cdots     &-m\sin m\theta   &m\cos m\theta \\
             -\cos\theta       & -2^2\cos2\theta & \cdots     &-m^2\cos m\theta &-m^2\sin m\theta\\
             (1-m^2)\sin\theta        & 2(4-m^2)\sin2\theta & \cdots      &0                &0 \\
             (1-m^2)\cos\theta        & 4(4-m^2)\cos2\theta & \cdots      &0                & \\
            % -\sin\theta       & \cdots     & -(m-1)^5\sin(m-1)\theta & 4\sin\varphi & 16\sin2\varphi & \cdots & k^{4}\sin k\varphi\\
             \vdots            & \ddots     &  \vdots       &   \vdots      &    \vdots       \\
             (1-m^2)\cos^{(K)}\theta  &2^{K-2}(4-m^2)\cos^{(K)}2\theta& \cdots  &0                &0\\
           \end{array}
         \right.\\
&\qquad \ \ \indent \left.
           \begin{array}{ccc}
             (m-1)\cos (m-1)\theta                 & \cdots                             & i\sin i\theta \\
             -(m-1)^2\sin (m-1)\theta              & \cdots                             & -i^2\cos i\theta \\
             -(m-1)((m-1)^2-m^2)\cos (m-1)\theta              & \cdots                             & -i(i^2-m^2)\sin i\theta \\
             (m-1)^2((m-1)^2-m^2)\sin (m-1)\theta               & \cdots                             & i^{2}(i^2-m^2)\cos i\theta \\
            % -\sin\theta       & \cdots     & -(m-1)^5\sin(m-1)\theta & 4\sin\varphi & 16\sin2\varphi & \cdots & k^{4}\sin k\varphi\\
                 \vdots                            &   \ddots                           & \vdots \\
            (m-1)^{K-2}((m-1)^2-m^2)\sin^{(K)}(m-1)\theta           & \cdots                             &i^{K-2}(i^2-m^2)\sin^{(K)}i\theta\\
           \end{array}
         \right|\\
&\ \ =m^3\cdot\prod\limits_{j=1}^{m-1}(m^{2}-j^2)\cdot\prod_{j=i}^{m-1}(m^{2}-j^2)\\
&\ \ \indent
\cdot\left|
           \begin{array}{cccc}
             -\sin\theta       & -2\sin2\theta   & \cdots     &-(m-1)\sin (m-1)\theta   \\
             -\cos\theta       & -2^2\cos2\theta & \cdots     &-(m-1)^2\cos (m-1)\theta\\
             \sin\theta        & 2^3\sin2\theta & \cdots     &(m-1)^3\sin (m-1)\theta  \\
             \cos\theta        & 2^4\cos2\theta & \cdots     &(m-1)^4\cos (m-1)\theta  \\
            % -\sin\theta       & \cdots     & -(m-1)^5\sin(m-1)\theta & 4\sin\varphi & 16\sin2\varphi & \cdots & k^{4}\sin k\varphi\\
             \vdots            & \ddots          &  \vdots       &   \vdots      \\
             \cos^{(K-2)}\theta  &2^{K-2}\cos^{(K-2)}2\theta & \cdots   & (m-1)^{K-2}\cos^{(K-2)}(m-1)\theta\\
           \end{array}
         \right.\\
&\ \ \ \ \indent
\left.
           \begin{array}{cccc}
             (m-1)\cos (m-1)\theta          &(m-2)\cos (m-2)\theta                 & \cdots                             & i\sin i\theta \\
             -(m-1)^2\sin (m-1)\theta       &-(m-2)^2\sin (m-2)\theta              & \cdots                             & -i^2\cos i\theta \\
             -(m-1)^3\cos (m-1)\theta       &-(m-2)^3\cos (m-2)\theta              & \cdots                             & -i^{3}\sin i\theta \\
             (m-1)^4\sin (m-1)\theta        &(m-2)^4\sin (m-2)\theta               & \cdots                             & i^{4}\cos i\theta \\
            % -\sin\theta       & \cdots     & -(m-1)^5\sin(m-1)\theta & 4\sin\varphi & 16\sin2\varphi & \cdots & k^{4}\sin k\varphi\\
                 \vdots       &    \vdots                            &   \ddots                           & \vdots \\
              (m-1)^{K-2}\sin^{(K-2)}(m-1)\theta  &(m-2)^{K-2}\sin^{(K-2)}(m-2)\theta           & \cdots                &i^{K-2}\sin^{(K-2)}i\theta\\
           \end{array}
         \right|\\
&\ \ =
m^3\prod\limits_{j=1}^{m-1}(m^{2}-j^2)\cdot\prod_{j=i}^{m-1}(m^{2}-j^2)\\
&\qquad \qquad \ \ \indent
\cdot W[1,\cos\theta,\cdots,\cos (m-1)\theta,\sin (m-1)\theta,\sin(m-2)\theta,\cdots,\sin i\theta],
\end{align*}
where $K=2m-i+1$ and $i=1,\cdots,m.$
These recursions imply that for each fixed $i=1,\cdots,m$ and $\theta\in(0,\pi)$,
\begin{align*}
  &W[1,\cos\theta,\cdots,\cos m\theta,\sin m\theta,\sin(m-1)\theta,\cdots,\sin i\theta]\\
&\indent=
\prod_{l=i}^{m}\left(l^3\prod\limits_{j=1}^{l-1}(l^{2}-j^2)\prod\limits_{j=i}^{l-1}(l^{2}-j^2)\right)
W[1,\cos\theta,\cdots,\cos (i-1)\theta]  \neq 0.
\end{align*}
As a result, Lemma \ref{lem2.2} tells us that the third order set in \eqref{eq21} is an ECT-system on $(0,\pi)$.

Now let us consider the order sets in \eqref{eq22}. By applying the first assertion and taking the change of variable $\theta\mapsto2\theta$ (resp. $\theta\mapsto2\pi-2\theta$) into account, we get that these three order sets are ECT-systems on $(0,\frac{\pi}{2})$ (resp. $(\frac{\pi}{2},\pi)$) when $k=0$.

When $k=1$ we use the third equalities in \eqref{eq24} and \eqref{eq37}.
%again know by the Chebyshev polynomials of the first
%kind and the second kind that for any $i\in \mathbb Z^+$,
%\begin{align}\label{eq37}
%\begin{split}
%  &\langle \cos\theta,\cos3\theta\cdots,\cos (2i+1)\theta\rangle
%  =
%  \langle \cos\theta,\cos^3\theta,\cdots,\cos^{2i+1}\theta\rangle \\
%  &\indent\indent\indent\indent\indent\indent\indent\indent\indent\indent\indent\ \ \
%  =
%  \langle \cos\theta,\cos\theta\cos2\theta,\cdots,\cos\theta\cos2i\theta\rangle, \\
%  &\langle \sin\theta,\sin3\theta,\cdots,\sin (2i+1)\theta\rangle
%  =
% \langle \sin\theta,\sin\theta\cos^2\theta,\cdots,\sin\theta\cos^{2i}\theta\rangle\\
% &\indent\indent\indent\indent\indent\indent\indent\indent\indent\indent\indent\ \ \
%  =
%  \langle \sin\theta,\sin\theta\cos2\theta,\cdots,\sin\theta\cos2i\theta\rangle,
%\end{split}
%\end{align}
%respectively.
Note that $\cos\theta\neq0$ and $\sin\theta\neq0$ on $(0,\pi)\backslash\{\frac{\pi}{2}\}$. Hence by definition and Lemma \ref{lem2.4},
the order sets $\{ \cos\theta,\cos3\theta,\cdots,\cos (2m+1)\theta\}$ and $\{ \sin\theta,\sin3\theta\cdots,\sin (2m+1)\theta\}$ are ECT-systems on $(0,\frac{\pi}{2})$ (resp. $(\frac{\pi}{2},\pi)$), if and only if the order set $\{1, \cos2\theta,\cdots,\cos 2m\theta\}$ is an ECT-system on $(0,\frac{\pi}{2})$ (resp. $(\frac{\pi}{2},\pi)$). The argument goes back to the case $k=0$.

Finally, it remains to show that the order set $\{\cos \theta,\cos3\theta,\cdots,\cos(2m+1)\theta,\sin(2m+1)\theta,\sin(2m-1)\theta,\cdots,\sin\theta\}$ is an ECT-system on $(0,\frac{\pi}{2})$ and on $(\frac{\pi}{2},\pi)$. The proof follows similarly as in the previous argument of the first assertion. As a result, we obtain the Chebyshev property for all the order sets stated in \eqref{eq21} and \eqref{eq22}.
\end{proof}

\subsection{Equalities of some spans}
This subsection is devoted to providing a simple observation and two equalities of spans which are useful to simplify the expression of the Melnikov function of system \eqref{eq10}. In fact, although some of the proofs of these results are a little lengthy, they are not technical and follow direct calculations.
\begin{lemma}\label{lem6.1}
  Let $\mathcal V_0$ and $\mathcal{V}_1$ be two linear spans defined by:
  \begin{align*}
    &\mathcal V_0=\langle\varepsilon_0,\varepsilon_1,\cdots,\varepsilon_{m_0}\rangle,\ \
    \mathcal V_1=\langle\xi_0,\xi_1,\cdots,\xi_{m_1}\rangle.
  \end{align*}
  If $\mathcal V_1$ is a subspace of $\mathcal V_0$, then for any given column vector
$\bm b_{m_1}\in\mathbb R^{m_1+1}$, there exists a column vector
 $\bm a_{m_0}\in\mathbb R^{m_0+1}$ such that $\bm{\xi}_{m_1}\bm b_{m_1}=\bm{\varepsilon}_{m_0}\bm a_{m_0}$ $($i.e.,
  $b_0\xi_0+b_1\xi_1+\cdots+b_{m_1}\xi_{m_1}=a_0\varepsilon_0+a_1\varepsilon_1+\cdots+a_{m_0}\varepsilon_{m_0}$$)$.
\end{lemma}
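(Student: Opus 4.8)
The plan is to observe that, once the notation is unwound, the statement is nothing more than the defining property of a subspace combined with the definition of a linear span; the argument will therefore be short and purely linear-algebraic, with no analytic input required.

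First I would read the two products as linear combinations,
\begin{align*}
\bm\xi_{m_1}\bm b_{m_1}=\sum_{j=0}^{m_1}b_j\xi_j,\qquad
\bm\varepsilon_{m_0}\bm a_{m_0}=\sum_{i=0}^{m_0}a_i\varepsilon_i,
\end{align*}
so that each side is an element of the respective span rather than a vector. I would then fix an arbitrary column vector $\bm b_{m_1}\in\mathbb R^{m_1+1}$ and set $w:=\bm\xi_{m_1}\bm b_{m_1}$. By the very definition of $\mathcal V_1$ as the span of $\xi_0,\cdots,\xi_{m_1}$, the element $w$ belongs to $\mathcal V_1$.

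Next I would invoke the hypothesis $\mathcal V_1\subseteq\mathcal V_0$ to conclude that $w\in\mathcal V_0$. By the definition of $\mathcal V_0$ as the span of $\varepsilon_0,\cdots,\varepsilon_{m_0}$, this membership means precisely that there exist scalars $a_0,\cdots,a_{m_0}$ with $w=\sum_{i=0}^{m_0}a_i\varepsilon_i$. Collecting these scalars into the column vector $\bm a_{m_0}=(a_0,\cdots,a_{m_0})^{\top}\in\mathbb R^{m_0+1}$ yields $w=\bm\varepsilon_{m_0}\bm a_{m_0}$, that is $\bm\xi_{m_1}\bm b_{m_1}=\bm\varepsilon_{m_0}\bm a_{m_0}$, which is exactly the claimed identity.

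There is no genuine obstacle here. The only points requiring a little care are to respect the row-times-column convention so that both sides are interpreted as functions, and to note that the coefficient vector $\bm a_{m_0}$ depends on $\bm b_{m_1}$ but need not be unique when the $\varepsilon_i$ are linearly dependent; since the lemma asserts only existence, this non-uniqueness is harmless.
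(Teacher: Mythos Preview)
Your proof is correct and takes essentially the same approach as the paper, which simply notes that $\bm\xi_{m_1}\bm b_{m_1}\in\mathcal V_1\subseteq\mathcal V_0$ and calls the assertion trivial. You have merely unwound this one-line observation in slightly more detail.
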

\begin{proof}
  The assertion is trivial because of the fact that $\bm{\xi}_{m_1}\bm b_{m_1}\in\mathcal V_1\subseteq\mathcal V_0$ for any $\bm b_{m_1}\in\mathbb R^{m_1+1}$.
\end{proof}

\begin{proposition}\label{prop6.1}
  For any $m\in\mathbb Z^+_0$ suppose that
\begin{align*}
  &\mathcal V_0
  =
  \left\langle
  \rho^j\frac{\cos^{i+1}\theta\sin^{j-i}\theta}{1-a\rho \cos\theta},
  \rho^j\frac{\cos^{i}\theta\sin^{j+1-i}\theta}{1-a\rho \cos\theta};\
  i=0,\cdots,j,\ j=0,\cdots,m
  \right\rangle,\\
  &\mathcal V_1
  =
  \left\langle
  \rho^{i+2p-1}\frac{\cos^{i}\theta}{1-a\rho \cos\theta},
  \rho^{j+2q}\frac{\sin\theta\cos^{j}\theta}{1-a\rho \cos\theta};\
  (i,p)\in B_1,\ (j,q)\in B_2
  \right\rangle,
\end{align*}
where $B_1$ and $B_2$ are those defined in \eqref{eq41}.
Then $\mathcal V_1=\mathcal V_0$.
\end{proposition}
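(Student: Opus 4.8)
The plan is to strip off the common nonvanishing factor and then compare the two spans monomial by monomial after a single algebraic reduction. Since $1-a\rho\cos\theta$ does not vanish on the relevant domain, multiplication by it is a linear isomorphism, so $\mathcal V_0=\mathcal V_1$ is equivalent to the equality of the corresponding spans of the \emph{numerators}. Writing $c=\cos\theta$ and $s=\sin\theta$, the numerator generators of $\mathcal V_0$ are $\rho^j c^{i+1}s^{j-i}$ and $\rho^j c^{i}s^{j+1-i}$, while those of $\mathcal V_1$ are $\rho^{i+2p-1}c^i$ (no $s$) and $\rho^{j+2q}sc^j$ (one $s$). The key simplification is to reduce every generator modulo $s^2=1-c^2$: each becomes either a $\rho$-$c$ monomial or $s$ times such a monomial. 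Because $1$ and $s$ are linearly independent over the polynomials in $c$, both numerator spans split as a direct sum of a ``$\sin$-free'' part and a ``$\sin$-linear'' part, and it suffices to match these two parts separately.

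First I would analyze $\mathcal V_0$. For fixed $j$ the two families of generators together produce, as $i$ runs from $0$ to $j$, every monomial $c^a s^{\,j+1-a}$ with $0\le a\le j+1$ (the first family supplies $a=1,\dots,j+1$, the second $a=0,\dots,j$). Splitting by the parity of the $s$-exponent and substituting $s^2=1-c^2$, the even part for this $j$ is spanned by $\{c^{\,j+1-2\ell}(1-c^2)^{\ell}\}_{\ell}$ and the odd part by $s\{c^{\,j-2\ell}(1-c^2)^{\ell}\}_{\ell}$. Each family is triangular in the lowest power of $c$, so it spans exactly $\langle c^a : 0\le a\le j+1,\ a\equiv j+1\ (\mathrm{mod}\ 2)\rangle$ (even part) and $s\,\langle c^a : 0\le a\le j,\ a\equiv j\ (\mathrm{mod}\ 2)\rangle$ (odd part). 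Collecting over $j=0,\dots,m$ yields an explicit description of $\mathcal V_0$ as the span of the monomials $\rho^A c^i$ with $A+i$ odd, $0\le i\le A+1$, $0\le A\le m$, together with $\rho^A s c^i$ with $A+i$ even, $0\le i\le A\le m$.

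Then I would compute the analogous description of $\mathcal V_1$ directly from $B_1$ and $B_2$. For the $\sin$-free generators $\rho^{i+2p-1}c^i$ one sets $A=i+2p-1$, reads off $A\equiv i+1\ (\mathrm{mod}\ 2)$, and from the range of $p$ dictated by $B_1$ obtains $i-1\le A\le m$ for $i\ge1$; the $i=0$ entries of $B_1$ must be checked separately and give precisely the odd powers $\rho^{A}$ with $A\le m$. For the $\sin$-linear generators $\rho^{j+2q}sc^j$ one similarly finds $A=j+2q$ with $A\equiv j\ (\mathrm{mod}\ 2)$ and $j\le A\le m$. Comparing with the description of $\mathcal V_0$ from the previous step shows the two monomial sets coincide in each part, whence $\mathcal V_0=\mathcal V_1$.

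The routine algebra (the isomorphism by the common factor, the triangularity of $\{c^{a}(1-c^2)^{\ell}\}$, the linear independence of distinct $\rho$-$c$ monomials) is harmless; the real work, and the only place an error could hide, is the bookkeeping that the floor bounds $[\tfrac{m+1}{2}]$, $[\tfrac{m-i+1}{2}]$, $[\tfrac{m-i}{2}]$ defining $B_1,B_2$ reproduce exactly the parity-constrained ranges coming from $\mathcal V_0$. I expect the main obstacle to be handling the two parities of $m$ (and the special $i=0$ branch of $B_1$) uniformly, which is why I would phrase the comparison through the congruences $A\equiv i\pm1\ (\mathrm{mod}\ 2)$, so that the parity conditions automatically clip each range to its correct top value $m$ or $m-1$.
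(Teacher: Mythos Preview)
Your proposal is correct, and the underlying algebra is the same as the paper's: both arguments rest entirely on the identity $\sin^2\theta=1-\cos^2\theta$ and on tracking the parity of exponents. The organization, however, differs. The paper proves the two inclusions directly: for $\mathcal V_1\subseteq\mathcal V_0$ it inserts $(\sin^2\theta+\cos^2\theta)^p$ into each generator of $\mathcal V_1$ and expands by the binomial theorem; for $\mathcal V_0\subseteq\mathcal V_1$ it replaces $\sin^{2k}\theta$ by $(1-\cos^2\theta)^k$ in each generator of $\mathcal V_0$, treating the four parity cases of $(i,j)$ separately. You instead pass to a common canonical basis---the monomials $\rho^A c^i$ and $\rho^A s\,c^i$---by exploiting the direct-sum splitting $\mathbb R[c]\oplus s\,\mathbb R[c]$, compute both spans explicitly in that basis, and compare the resulting index sets. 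Your route is a bit more conceptual and makes the parity constraints $A+i\equiv 1$ or $0\pmod 2$ visible from the outset; the paper's route is more hands-on but avoids the intermediate description and goes straight from one generating set to the other. Either way, the ``real work'' you identify---checking that the floor bounds in $B_1,B_2$ clip the ranges of $A$ to exactly the largest value $\le m$ of the correct parity---is the same bookkeeping the paper carries out case by case.
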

\begin{proof}
  The proof is only based on some simple and direct calculations. In fact,
  for $(i,p)\in B_1$ we have
  \begin{align*}
    \rho^{i+2p-1}\frac{\cos^{i}\theta}{1-a\rho \cos\theta}
    &=
    \rho^{i+2p-1}\frac{(\sin^2\theta+\cos^2\theta)^p\cos^{i}\theta}{1-a\rho \cos\theta}\\
    &=
    \sum_{k=0}^{p}\binom{p}{k}\rho^{i+2p-1}\frac{\cos^{i+2k}\theta\sin^{2p-2k}\theta}{1-a\rho \cos\theta}\in
    \mathcal V_0.
  \end{align*}
  Also, for $(j,q)\in B_2$ one can verify that
  \begin{align*}
    \rho^{j+2q}\frac{\sin\theta\cos^{j}\theta}{1-a\rho \cos\theta}
    &=
    \rho^{j+2q}\frac{(\sin^2\theta+\cos^2\theta)^q\sin\theta\cos^{j}\theta}{1-a\rho \cos\theta}\\
    &=
    \sum_{k=0}^{q}\binom{q}{k}\rho^{j+2q}\frac{\cos^{j+2k}\theta\sin^{2q-2k+1}\theta}{1-a\rho \cos\theta}\in
    \mathcal V_0.
    \end{align*}
    Accordingly, each generator of $\mathcal V_1$ is an element of $\mathcal V_0$, which means $\mathcal V_1\subseteq\mathcal V_0$.

    On the other hand, suppose that $i,j\in\{0,\cdots,m\}$ with $i\leq j\leq m$. Then when $j-i$ is even, i.e., $j=i+2p$ with $p=0,\cdots,\left[\frac{m-i}{2}\right]$, we obtain
    \begin{align*}
    \rho^j\frac{\cos^{i+1}\theta\sin^{j-i}\theta}{1-a\rho \cos\theta}
    &=
    \rho^{i+2p}\frac{\cos^{i+1}\theta(1-\cos^2\theta)^{p}}{1-a\rho \cos\theta}\\
    &=
    \sum_{k=0}^{p}(-1)^k\binom{p}{k}\rho^{(i+2k+1)+2(p-k)-1}\frac{\cos^{i+2k+1}\theta}{1-a\rho \cos\theta} \in
    \mathcal V_1,\\
    \rho^j\frac{\cos^{i}\theta\sin^{j+1-i}\theta}{1-a\rho \cos\theta}
    &=
    \rho^{i+2p}\frac{\sin\theta\cos^{i}\theta(1-\cos^2\theta)^{p}}{1-a\rho \cos\theta}\\
    &=
    \sum_{k=0}^{p}(-1)^k\binom{p}{k}\rho^{(i+2k)+2(p-k)}\frac{\sin\theta\cos^{i+2k}\theta}{1-a\rho \cos\theta} \in
    \mathcal V_1.
    \end{align*}
    When $j-i$ is odd, i.e., $j=i+2p+1$ with $p=0,\cdots,\left[\frac{m-i-1}{2}\right]$, we get
    \begin{align*}
    \rho^j\frac{\cos^{i+1}\theta\sin^{j-i}\theta}{1-a\rho \cos\theta}
    &=
    \rho^{i+2p+1}\frac{\sin\theta\cos^{i+1}\theta(1-\cos^2\theta)^{p}}{1-a\rho \cos\theta}\\
    &=
    \sum_{k=0}^{p}(-1)^k\binom{p}{k}\rho^{(i+2k+1)+2(p-k)}\frac{\sin\theta\cos^{i+2k+1}\theta}{1-a\rho \cos\theta} \in
    \mathcal V_1,\\
    \rho^j\frac{\cos^{i}\theta\sin^{j+1-i}\theta}{1-a\rho \cos\theta}
    &=
    \rho^{i+2p+1}\frac{\cos^{i}\theta(1-\cos^2\theta)^{p+1}}{1-a\rho \cos\theta}\\
    &=
    \sum_{k=0}^{p+1}(-1)^k\binom{p+1}{k}\rho^{(i+2k)+2(p+1-k)-1}\frac{\cos^{i+2k}\theta}{1-a\rho \cos\theta} \in
    \mathcal V_1.
    \end{align*}
    As a result, $\mathcal V_0\subseteq\mathcal V_1$. The assertion follows.
    \end{proof}

\begin{lemma}\label{lem2.9}
  Suppose that $E\subseteq[-\pi,\pi]$, $k\in\mathbb Z_0^+$ and $l\in\mathbb Z$ with $k+ l\geq0$. The following equality holds.
  \begin{align*}
  y^l\int_{E}
  \frac{\sin^{\kappa}\theta\cos^k\theta}{y-\cos\theta}d\theta
  &=
  \int_{E}
  \frac{\sin^{\kappa}\theta\cos^{k+l}\theta}{y-\cos\theta}d\theta
  +\left(y^lT^E_{\kappa,k-1}(y)-T^E_{\kappa,k+l-1}(y)\right),
  \end{align*}
  where $\kappa\in\{0,1\}$ and $T^E_{\kappa,q}$ is a polynomial having the next expression
  \begin{align}\label{eq29}
  \begin{split}
  T^E_{\kappa,q}(y)
  =
  \left\{
  \begin{aligned}
  &0 , &q=-1,\\
  &\sum_{i=1}^{q+1}\binom{q+1}{i}(-1)^i y^{q+1-i}
  \int_{E}\sin^{\kappa}\theta\left(y-\cos\theta\right)^{i-1}d\theta , &q\geq0.
  \end{aligned}
  \right.
  \end{split}
  \end{align}
\end{lemma}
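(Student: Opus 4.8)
The plan is to derive the two-index identity from a single ``reduction'' formula expressing each integral in terms of the base integral with $k=0$. Throughout I abbreviate $J_{\kappa,k}(y):=\int_{E}\frac{\sin^{\kappa}\theta\cos^{k}\theta}{y-\cos\theta}\,d\theta$, so that the asserted equality reads $y^{l}J_{\kappa,k}=J_{\kappa,k+l}+(y^{l}T^E_{\kappa,k-1}-T^E_{\kappa,k+l-1})$. The statement I would isolate first is the reduction formula
\begin{equation*}
J_{\kappa,k}(y)=y^{k}J_{\kappa,0}(y)+T^E_{\kappa,k-1}(y),\qquad k\in\mathbb Z_0^+,
\tag{$\star$}
\end{equation*}
from which the lemma drops out immediately.

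To prove $(\star)$, the key observation is the elementary expansion obtained by writing $\cos\theta=y-(y-\cos\theta)$ and applying the binomial theorem:
\begin{equation*}
\cos^{k}\theta=\sum_{i=0}^{k}\binom{k}{i}(-1)^{i}y^{k-i}(y-\cos\theta)^{i}.
\end{equation*}
Dividing by $y-\cos\theta$ separates off the $i=0$ term $y^{k}/(y-\cos\theta)$, while each term with $i\geq1$ becomes the polynomial $\binom{k}{i}(-1)^{i}y^{k-i}(y-\cos\theta)^{i-1}$. Multiplying by $\sin^{\kappa}\theta$ and integrating over $E$ then gives $J_{\kappa,k}(y)=y^{k}J_{\kappa,0}(y)+\sum_{i=1}^{k}\binom{k}{i}(-1)^{i}y^{k-i}\int_{E}\sin^{\kappa}\theta(y-\cos\theta)^{i-1}\,d\theta$, and this last sum is exactly $T^E_{\kappa,k-1}$ by the definition \eqref{eq29} with $q=k-1$; the boundary case $k=0$ is the trivial identity $J_{\kappa,0}=J_{\kappa,0}$, since $T^E_{\kappa,-1}=0$. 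Thus $(\star)$ holds for every $k\geq0$.

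With $(\star)$ in hand I would conclude by invoking it twice. Because $k\geq0$ and, by hypothesis, $k+l\geq0$, both exponents are admissible, so
\begin{equation*}
J_{\kappa,k}=y^{k}J_{\kappa,0}+T^E_{\kappa,k-1},\qquad J_{\kappa,k+l}=y^{k+l}J_{\kappa,0}+T^E_{\kappa,k+l-1}.
\end{equation*}
Multiplying the first relation by $y^{l}$ and subtracting the second cancels the common term $y^{k+l}J_{\kappa,0}$ and leaves precisely $y^{l}J_{\kappa,k}-J_{\kappa,k+l}=y^{l}T^E_{\kappa,k-1}-T^E_{\kappa,k+l-1}$, which is the assertion. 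There is no genuine analytic obstacle: on the relevant range, where $y-\cos\theta$ does not vanish on $E\subseteq[-\pi,\pi]$ (e.g. $|y|>1$), all integrals are proper and termwise integration of a finite binomial sum needs no justification. The only point demanding care is the bookkeeping—matching the integrated remainder to the definition \eqref{eq29} of $T^E_{\kappa,q}$ and observing that $(\star)$ holds uniformly in the nonnegative exponent, so that it may legitimately be applied to both $k$ and $k+l$ irrespective of the sign of $l$.
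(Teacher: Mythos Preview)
Your proof is correct and follows essentially the same approach as the paper: both arguments expand $\cos^{k}\theta=(y-(y-\cos\theta))^{k}$ via the binomial theorem to obtain the reduction formula $J_{\kappa,k}=y^{k}J_{\kappa,0}+T^E_{\kappa,k-1}$ (the paper's equation with $p=0$), and then apply it at exponents $k$ and $k+l$ and subtract. The only cosmetic difference is that the paper carries an auxiliary factor $y^{p}$ through the intermediate identity before specializing, whereas you isolate $(\star)$ first and multiply by $y^{l}$ afterward.
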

\begin{proof}
Suppose that $p\in\mathbb Z$ and $q\in\mathbb Z^+_0$. We have
  \begin{align}\label{eq27}
  \begin{split}
  y^p&\int_{E}
  \frac{\sin^{\kappa}\theta\cos^q\theta}{y-\cos\theta}d\theta =
  y^p\int_{E}
  \frac{\sin^{\kappa}\theta(\cos\theta-y+y)^q}{y-\cos\theta}d\theta\\
  &=
  y^p
  \left(
  \sum_{i=1}^{q}\binom{q}{i}(-1)^i y^{q-i}
  \int_{E}\sin^{\kappa}\theta\left(y-\cos\theta\right)^{i-1}d\theta
  +y^q\int_{E}
  \frac{\sin^{\kappa}\theta}{y-\cos\theta}d\theta
  \right)\\
  &=
   y^pT^E_{\kappa,q-1}(y)
  +y^{p+q}
  \int_{E}
  \frac{\sin^{\kappa}\theta}{y-\cos\theta}d\theta.
    \end{split}
  \end{align}

  By assumption, if we take $(p,q)=(l,k)$ and $(p,q)=(0,k+l)$ respectively, correspondingly \eqref{eq27} becomes
  \begin{align*}
    &y^l\int_{E}
  \frac{\sin^{\kappa}\theta\cos^k\theta}{y-\cos\theta}d\theta
  =
   y^lT^E_{\kappa,k-1}(y)
  + y^{k+l}
  \int_{E}
  \frac{\sin^{\kappa}\theta}{y-\cos\theta}d\theta
  \end{align*}
  and
  \begin{align*}
  &\int_{E}
  \frac{\sin^{\kappa}\theta\cos^{k+l}\theta}{y-\cos\theta}d\theta
  =
   T^E_{\kappa,k+l-1}(y)
  + y^{k+l}
  \int_{E}
  \frac{\sin^{\kappa}\theta}{y-\cos\theta}d\theta,
  \end{align*}
  respectively. This implies that
  \begin{align*}
  y^l\int_{E}
  \frac{\sin^{\kappa}\theta\cos^k\theta}{y-\cos\theta}dt
  &=
  \int_{E}
  \frac{\sin^{\kappa}\theta\cos^{k+l}\theta}{y-\cos\theta}dt
  +\left(y^lT^E_{\kappa,k-1}(y)-T^E_{\kappa,k+l-1}(y)\right).
  \end{align*}
\end{proof}

 Lemma \ref{lem2.9} has the next consequence.

\begin{proposition}\label{prop6}
Let $\mathcal B_1$ and $\mathcal B_2$ be those defined in \eqref{eq44}. Then $\mathcal B_1=\mathcal B_2$.
\end{proposition}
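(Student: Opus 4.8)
The plan is to prove the two inclusions $\mathcal B_1\subseteq\mathcal B_2$ and $\mathcal B_2\subseteq\mathcal B_1$ separately, with Lemma \ref{lem2.9} as the bridge in both directions. Write $\Pi:=\langle 1,y,\dots,y^{m-1}\rangle$ for the polynomial part of $\mathcal B_2$. Specializing Lemma \ref{lem2.9} to $\kappa=0$ and $\kappa=1$ gives, for every interval $E_s$ and whenever $k+l\geq0$,
\begin{align*}
y^{l}C^{E_s}_{k,1}(y)&=C^{E_s}_{k+l,1}(y)+\big(y^{l}T^{E_s}_{0,k-1}(y)-T^{E_s}_{0,k+l-1}(y)\big),\\
y^{l}S^{E_s}_{k,1}(y)&=S^{E_s}_{k+l,1}(y)+\big(y^{l}T^{E_s}_{1,k-1}(y)-T^{E_s}_{1,k+l-1}(y)\big).
\end{align*}
The quantitative fact I would record first is a sharp degree bound on the bracketed remainders: from \eqref{eq29} each $T^{E_s}_{\kappa,q}$ has degree $q$ with leading coefficient $-\int_{E_s}\sin^{\kappa}\theta\,d\theta$, so in each bracket the two top-degree terms (both equal to $-\int_{E_s}\sin^{\kappa}\theta\,d\theta\cdot y^{k+l-1}$) cancel and the remainder has degree at most $k+l-2$. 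This cancellation is exactly what confines the remainders to $\Pi$.

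For $\mathcal B_1\subseteq\mathcal B_2$ I would argue generator by generator. Taking $k=i$, $l=m-(i+2p-1)$ gives $k+l=m+1-2p$, so each $C$-generator of $\mathcal B_1$ equals $C^{E_s}_{m+1-2p,1}$ plus a polynomial of degree $\leq m-1-2p\leq m-1$; both summands lie in $\mathcal B_2$ (the first because $p$ runs through $0,\dots,[\frac{m+1}{2}]$, the second because $\Pi\subseteq\mathcal B_2$). The $S$-generators are treated identically with $k=i$, $l=m-(i+2p)$, giving $S^{E_s}_{m-2p,1}$ plus a degree $\leq m-1$ polynomial. One checks from the definitions of $B_1,B_2$ that the indices $m+1-2p$ and $m-2p$ are nonnegative throughout, so all symbols are legitimate; hence $\mathcal B_1\subseteq\mathcal B_2$.

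The reverse inclusion I would reduce to the single assertion $\Pi\subseteq\mathcal B_1$. Granting that, inverting the identities above (choosing $i=0$ when $p\geq1$ and $i=1$ when $p=0$) writes each generator $C^{E_s}_{m+1-2p,1}$ and $S^{E_s}_{m-2p,1}$ of $\mathcal B_2$ as a $\mathcal B_1$-generator minus a polynomial of degree $\leq m-1$, hence as an element of $\mathcal B_1$; together with $\Pi\subseteq\mathcal B_1$ this yields $\mathcal B_2\subseteq\mathcal B_1$, and the proposition follows.

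The heart of the proof, and the step I expect to be the main obstacle, is therefore $\Pi\subseteq\mathcal B_1$. The mechanism I would use is that for fixed $s$ and fixed $p$ the space $\mathcal B_1$ contains several generators $y^{m-i-2p+1}C^{E_s}_{i,1}$ sharing the single transcendental part $y^{m+1-2p}C^{E_s}_{0,1}$; subtracting two of them annihilates that part and leaves a genuine polynomial in $\mathcal B_1$. Differencing consecutive values of $i$ produces the monomials $\big(\int_{E_s}\cos^{i}\theta\,d\theta\big)y^{\,m-i}$, and the analogous $S$-differences produce $\big(\int_{E_s}\sin\theta\cos^{i}\theta\,d\theta\big)y^{\,m-1-i}$; all have degree $\leq m-1$, so they automatically sit inside $\Pi$. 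What remains — and this is the only nontrivial computation — is to show they span all of $\Pi$, i.e. that for each power of $y$ below $m$ at least one of these coefficients is nonzero. The monomials of the parity of $m$ come for free from $\int_{E_s}d\theta=|E_s|\neq0$ (the $i=0$ versus $i=1$ difference), while for the opposite parity I would combine the $C$- and $S$-contributions and use $\int_{E_s}\sin\theta\cos^{i}\theta\,d\theta=-\frac{1}{i+1}\big[\cos^{i+1}\theta\big]_{\partial E_s}$ together with the fact that the $E_s$ tile the circle, so that the relevant moments cannot vanish simultaneously for a given power. Verifying this non-degeneracy is the crux; everything else is bookkeeping with Lemma \ref{lem2.9}.
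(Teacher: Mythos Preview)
Your proposal is correct and follows essentially the same route as the paper: both directions are obtained from Lemma~\ref{lem2.9}, and the reverse inclusion is reduced to showing $\Pi=\langle 1,\dots,y^{m-1}\rangle\subseteq\mathcal B_1$ by forming differences of generators that kill the transcendental part. The paper carries out the latter with exactly your ``$i=0$ vs.\ $i=1$'' $C$-difference (summed over $s$ to get the constant $2\pi$) for the monomials $y^{m-2p}$, and the pair of identities
\[
y^{m-2p}C^{E_0}_{1,1}-y^{m-2p-1}C^{E_0}_{2,1}=(\sin\vartheta_1-\sin\vartheta_0)\,y^{m-2p-1},\qquad
y^{m-2p}S^{E_0}_{0,1}-y^{m-2p-1}S^{E_0}_{1,1}=(\cos\vartheta_0-\cos\vartheta_1)\,y^{m-2p-1},
\]
for the monomials of the opposite parity, observing that the two coefficients cannot vanish together since $\vartheta_0\neq\vartheta_1$.

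The only point where your sketch should be tightened is precisely this opposite-parity step. Your appeal to the fact that ``the $E_s$ tile the circle'' is misleading here: summing the moments $\int_{E_s}\cos\theta\,d\theta$ or $\int_{E_s}\sin\theta\,d\theta$ over all $s$ telescopes to $0$, so the tiling by itself yields nothing. What you actually need (and what the paper does) is to fix a single interval $E_s$ and use that $(\cos\vartheta_s,\sin\vartheta_s)\neq(\cos\vartheta_{s+1},\sin\vartheta_{s+1})$; this immediately gives the required non-vanishing and replaces your ``moments cannot vanish simultaneously'' remark with a one-line argument.
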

\begin{proof} Let $T^E_{\kappa,q}$ be that defined in \eqref{eq29} with $E=E_0,\cdots,E_n$ defined in \eqref{eq25}. First we have by Lemma \ref{lem2.9} that
for $p=1,2,\cdots,[\frac{m+1}{2}]$,
  \begin{align}\label{eq33}
  y^{m-(2p-1)} C^{E}_{0,1}
  &=
   C^{E}_{m+1-2p,1}
  -T^E_{0,m-2p}
  \in\mathcal B_2.
    \end{align}
Moreover, observe that the leading coefficient of the polynomial $T^E_{\kappa,q}$ is $-\int_E\sin^{\kappa}\theta d\theta$ for any $q\in\mathbb Z_0^+$. Hence, the degree of $y^{q_1}T^E_{\kappa,q_2}(y)-T^E_{\kappa,q_1+q_2}(y)$ is at most $q_1+q_2-1$ (resp. $0$) when $q_0+q_1\geq1$ (resp. $q_0+q_1=0$).
  Thus Lemma \ref{lem2.9} also tells us that
    \begin{itemize}
\item For $i=1,\cdots,m+1$ and $p=0,\cdots,[\frac{m-i+1}{2}]$,
    \begin{align}\label{eq34}
    \begin{split}
    y^{m-(i+2p-1)} C^{E}_{i,1}
  =
  C^{E}_{m+1-2p,1}
  +\left(y^{m-(i+2p-1)}T^E_{0,i-1}-T^E_{0,m-2p}\right)
  \in
  \mathcal B_2.
  \end{split}
    \end{align}
\item For $i=0,\cdots,m$ and $p=0,\cdots,[\frac{m-i}{2}]$,
     \begin{align}\label{eq35}
     \begin{split}
  y^{m-(i+2p)}S^{E}_{i,1}
  =
   S^{E}_{m-2p,1}
  +\left(y^{m-(i+2p)}T^E_{1,i-1}-T^E_{1,m-2p-1}\right)
  \in
  \mathcal B_2.
  \end{split}
  \end{align}
  \end{itemize}
Consequently, $\mathcal B_1\subseteq\mathcal B_2$.

Now we prove that $\mathcal B_2\subseteq\mathcal B_1$. Then $\mathcal B_1=\mathcal B_2$ and the conclusion is obtained.
In fact, due to the definition of the intervals $E_0,\cdots,E_n$ in \eqref{eq25}, one can easily check that
\begin{itemize}
  \item For $p=1,2,\cdots,[\frac{m}{2}]$,
\begin{align*}
2\pi y^{m-2p}
=
\sum^{n}_{s=0}\left(
y^{m-(2p-1)}C^{E_s}_{0,1}-y^{m-2p}C^{E_s}_{1,1}
\right)
\in
\mathcal B_1,
\text{ i.e., $y^{m-2p}\in \mathcal B_1$}.
\end{align*}
  \item For $p=0,1,\cdots,[\frac{m-1}{2}]$,
\begin{align*}
&(\sin\vartheta_1-\sin\vartheta_0)y^{m-(2p+1)}
=
y^{m-2p}C^{E_0}_{1,1}-y^{m-(2p+1)}C^{E_0}_{2,1}
\in
\mathcal B_1,\\
&(\cos\vartheta_0-\cos\vartheta_1)y^{m-(2p+1)}
=
y^{m-2p}S^{E_0}_{0,1}-y^{m-(2p+1)}S^{E_0}_{1,1}
\in
\mathcal B_1,
\end{align*}
which implies that $y^{m-(2p+1)}\in\mathcal B_1$ (here we have used the fact that $\sin\vartheta_1-\sin\vartheta_0=0$ and $\cos\vartheta_1-\cos\vartheta_0=0$ do not hold simultaneously).
\end{itemize}
Hence, $1,y,\cdots,y^{m-1}\in\mathcal B_1$. Taking the equalities in \eqref{eq33}, \eqref{eq34} and \eqref{eq35} into account, we can finally get that $\mathcal B_2\subseteq\mathcal B_1$.
\end{proof}

\section*{Acknowledgements}

The first author is supported by the NSF of China (No.11401255) and
the China Scholarship Council (No. 201606785007) and the Fundamental
Research Funds for the Central Universities (No. 21614325). The second author is supported by the NNSF of China (No. 11771101), the major research program of colleges and universities in Guangdong Province, PR China (No. 2017KZDX M054), and the Science and Technology Program of Guangzhou, China (No. 20180501 0001).
The third author is partially supported by NNSF of China (No. 11671254, 11871334 and 12071284) and also by Innovation Program of Shanghai Municipal Education Commission.

\end{document}